\documentclass[11pt]{article}

\usepackage{etex}

\usepackage{mathtools}

\usepackage{enumitem}

\usepackage{amsmath,amsthm}

\usepackage{todonotes}

\makeatletter 
\let\orgdescriptionlabel\descriptionlabel
\renewcommand*{\descriptionlabel}[1]{%
	\let\orglabel\label
	\let\label\@gobble
	\phantomsection
	\edef\@currentlabel{#1}%
 
	\let\label\orglabel
	\orgdescriptionlabel{#1}%
}
											
\def\th@plain{%

	\thm@notefont{} 
	\itshape 
}
\def\th@definition{%

	\thm@notefont{}
	\normalfont 
}

\g@addto@macro\th@remark{\thm@headpunct{}}
\g@addto@macro\th@definition{\thm@headpunct{}}
\g@addto@macro\th@plain{\thm@headpunct{}}
\makeatother 

\usepackage{amssymb}
 
\usepackage{graphicx}

\usepackage[final]{showkeys} 

 \usepackage{etoolbox}

 \usepackage{wasysym}
 
\usepackage{mathrsfs}
 
\usepackage{mathpazo}

\usepackage[titletoc]{appendix}
\usepackage[doc]{optional}
 \usepackage{soul}

\usepackage{colortbl,booktabs,sectsty,multirow}
\usepackage{xcolor}

\usepackage{empheq}

\definecolor{myblue}{rgb}{.8, .8, 1}
  \newcommand*\mybluebox[1]{%
    \colorbox{myblue}{\hspace{1em}#1\hspace{1em}}}

 \usepackage[obeyspaces,hyphens,spaces]{url}
 
\usepackage{hyperref}
\hypersetup{
    colorlinks=true,       
    linkcolor=green,          
    citecolor=green,        
    filecolor=magenta,      
    urlcolor=cyan           
}

\usepackage[
  open,
  openlevel=2,
  atend,
  numbered
]{bookmark}

\usepackage[capitalize, nameinlink, noabbrev]{cleveref}
\crefname{equation}{}{}
\crefname{chapter}{Chapter}{Chapter}
\crefname{item}{}{items}
\crefname{figure}{Figure}{Figure}
\crefname{theorem}{Theorem}{Theorem}
\crefname{lemma}{Lemma}{}
\crefname{proposition}{Proposition}{Proposition}
\crefname{corollary}{Corollary}{Corollary}
\crefname{definition}{Definition}{Definition}
\crefname{fact}{Fact}{Fact}
\crefname{example}{Example}{Example}
\crefname{algorithm}{Algorithm}{Algorithm}
\crefname{remark}{Remark}{Remark}
\crefname{note}{Note}{Note}
\crefname{notation}{Notation}{Notation}
\crefname{case}{Case}{Case}
\crefname{exercise}{Exercise}{Exercise}
\crefname{question}{Question}{Question}
\crefname{claim}{Claim}{Claim}

\crefname{enumi}{}{}

\usepackage[top= 2cm, bottom = 2 cm, left = 2.2 cm, right= 2.2 cm]{geometry}

 \usepackage{float}

 \usepackage{pgf}

\parindent  4mm

\allowdisplaybreaks
 
\numberwithin{equation}{section}

\theoremstyle{plain}  
\newtheorem{theorem}{Theorem}[section]

\newtheorem{corollary}[theorem]{Corollary}
\newtheorem{fact}[theorem]{Fact}
\newtheorem{lemma}[theorem]{Lemma}
\newtheorem{proposition}[theorem]{Proposition}

\theoremstyle{definition}  
\newtheorem{definition}[theorem]{Definition}
\newtheorem{example}[theorem]{Example}

\newtheorem{question}[theorem]{Question}
\newtheorem{remark}[theorem]{Remark}

\newcommand{\aff}{\ensuremath{\operatorname{aff}}}
\newcommand{\spn}{\ensuremath{{\operatorname{span}}}}

\newcommand{\CCO}[1]{CC{#1}}

\newcommand{\CRO}[1]{CR{#1}}

\providecommand{\norm}[1]{\lVert#1\rVert}
\providecommand{\Norm}[1]{{\Big\lVert}#1{\Big\rVert}}
\providecommand{\NNorm}[1]{{\bigg\lVert}#1{\bigg\rVert}}
\providecommand{\innp}[1]{\langle#1\rangle}
\providecommand{\Innp}[1]{\Big\langle#1\Big\rangle}
\providecommand{\IInnp}[1]{\bigg\langle#1\bigg\rangle}

\begin{document}
 
\title{ \sffamily  On circumcenters of finite sets in Hilbert spaces}

\author{
         Heinz H.\ Bauschke\thanks{
                 Mathematics, University of British Columbia, Kelowna, B.C.\ V1V~1V7, Canada.
                 E-mail: \href{mailto:heinz.bauschke@ubc.ca}{\texttt{heinz.bauschke@ubc.ca}}.},~
         Hui\ Ouyang\thanks{
                 Mathematics, University of British Columbia, Kelowna, B.C.\ V1V~1V7, Canada.
                 E-mail: \href{mailto:hui.ouyang@ubc.ca}{\texttt{hui.ouyang@ubc.ca}}.},~ 
         and Xianfu\ Wang\thanks{
                 Mathematics, University of British Columbia, Kelowna, B.C.\ V1V~1V7, Canada.
                 E-mail: \href{mailto:shawn.wang@ubc.ca}{\texttt{shawn.wang@ubc.ca}}.}
                 }

\date{July 3, 2018}
\maketitle
 
\begin{abstract}
\noindent
A well-known object in classical Euclidean geometry is the
circumcenter of a triangle, i.e., the point that is equidistant
from all vertices.
The purpose of this paper is to provide a systematic study of the
circumcenter of sets containing finitely many points 
in Hilbert space. This is motivated 
by recent works of Behling, Bello Cruz, and
Santos on accelerated versions of the
Douglas--Rachford method. 
We present basic results and properties of the circumcenter. 
Several examples
are provided to illustrate the tightness of various assumptions. 
\end{abstract}

{\small
\noindent
{\bfseries 2010 Mathematics Subject Classification:} 
{Primary 51M05; Secondary 
41A50,
90C25 

}

\noindent{\bfseries Keywords:}
Best Approximation Problem, 
Circumcenter,
Circumcentered Douglas--Rachford Method
}

\section{Introduction and standing assumption}

Throughout this paper,
	\begin{empheq}[box = \mybluebox]{equation*}
		 \text{$\mathcal{H}$ is a real Hilbert space} 
	\end{empheq}
with inner product $\innp{\cdot,\cdot}$ and induced norm $\|\cdot\|$. 
We denote by  $\mathcal{P}(\mathcal{H})$ the set of all nonempty 
subsets of $\mathcal{H}$ containing \emph{finitely many}
elements. 
Assume that 
\begin{empheq}[box=\mybluebox]{equation*}
S=\{x_{1}, x_{2}, \ldots, x_{m}\} \in \mathcal{P}(\mathcal{H}).
\end{empheq}
\emph{The goal of this paper is to provide a systematic study of
the circumcenter of $S$, i.e., of the (unique if it exists) point in the
affine hull of $S$ that is equidistant all points in $S$.}
The classical case in trigonometry or Euclidean geometry arises
when $m=3$ and $\mathcal{H}=\mathbb{R}^2$. 
Recent applications of the circumcenter focus on the
present much more general case. 
Indeed, our work is motivated by recent works of Behling, Bello
Cruz, and Santos (see \cite{BCS2017} and \cite{BCS2018}) on
accelerating the Douglas--Rachford algorithm by employing the
circumcenter of intermediate iterates to solve certain best approximation
problems. 

The paper is organized as follows.
Various auxiliary results are collected in 
\cref{sec:AuxResults} to ease subsequent proofs.
Based on the circumcenter, we introduce our main actor, the 
\emph{circumcenter operator}, in \cref{sec:DefinCircOper}. 
Explicit formulae for the 
circumcenter are provided in Sections \ref{sec:ClosFormuCircOper}
and \ref{sec:SymmFormuCCS} while
\cref{sec:BasiPropCircOper} records some 
basic properties.
In \cref{sec:LimiCircOperSeqSet}, we turn to the behaviour of the
circumcenter when sequences of sets are considered.
\cref{sec:CircThreePoints} deals with the case when the set
contains three points which yields particularly
pleasing results. 
The importance of the circumcenter in the algorithmic work of
Behling et al.\ is explained in \cref{sec:applications}. 
In the final \cref{sec:CircCrossProd}, we return to more 
classical roots of the circumcenter and discuss formulae
involving cross products when $\mathcal{H}=\mathbb{R}^3$. 

The notation employed is standard and largely follows 
\cite{BC2017}.

\section{Auxiliary results} \label{sec:AuxResults}

In this section, we provide various results that will be useful in the sequel.

\subsection{Affine sets}
Recall that a nonempty subset $S$ of $\mathcal{H}$ 
is an \emph{affine subspace} of $\mathcal{H}$ if
$(\forall \rho\in\mathbb{R})$ $\rho S + (1-\rho)S = S$; 
moreover, the smallest affine subspace containing $S$ is the
\emph{affine hull} of $S$, denoted $\aff S$. 

\begin{fact} {\rm \cite[page 4]{R2015}}
Let $S \subseteq \mathcal{H}$ be an affine subspace and let $a
\in \mathcal{H}$. Then the \emph{translate} of $S$ by $a$,  
which is defined by 
\begin{align*}
S + a = \{ x + a ~|~ x \in S \},
\end{align*}
is another affine subspace.
\end{fact}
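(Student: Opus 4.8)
The plan is to verify the defining property of an affine subspace directly for the translate $S+a$. First I would observe that $S+a$ is nonempty, since $S$ is nonempty by assumption on affine subspaces. Then, fixing an arbitrary $\rho \in \mathbb{R}$, I would expand the Minkowski combination $\rho(S+a) + (1-\rho)(S+a)$ using the elementary algebra of sums and scalar multiples of sets: $\rho(S+a) = \rho S + \{\rho a\}$ and $(1-\rho)(S+a) = (1-\rho)S + \{(1-\rho)a\}$, so that their sum equals $\rho S + (1-\rho) S + \{\rho a + (1-\rho)a\} = \bigl(\rho S + (1-\rho)S\bigr) + a$.

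Next I would invoke the hypothesis that $S$ is an affine subspace, which by definition gives $\rho S + (1-\rho)S = S$ for every $\rho \in \mathbb{R}$; hence $\rho(S+a) + (1-\rho)(S+a) = S+a$. Since $\rho$ was arbitrary, this is precisely the definition of $S+a$ being an affine subspace, and the proof is complete.

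The only step requiring any care is the set-arithmetic manipulation: one must keep in mind that $\{\rho a\} + \{(1-\rho)a\} = \{a\}$ is a single point, and that $\alpha(T+b) = \alpha T + \{\alpha b\}$ holds for subsets $T$ by unwinding the definitions. There is no genuine obstacle here — the statement is an immediate consequence of the definition of an affine subspace together with the distributivity of scaling over Minkowski sums. (One could alternatively argue via the characterization that a nonempty set is affine if and only if it is a translate of a linear subspace, but the direct verification above is shorter and uses only the definition quoted in the text.)
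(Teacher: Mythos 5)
Your verification is correct: the identities $\rho(S+a)=\rho S+\{\rho a\}$ and $(A+\{b\})+(C+\{d\})=(A+C)+\{b+d\}$ reduce the claim immediately to the defining property $\rho S+(1-\rho)S=S$ of the affine subspace $S$, and the nonemptiness of $S+a$ is inherited from $S$. The paper itself offers no proof here (it simply cites Rockafellar), so there is nothing to compare against; your direct, definition-only argument is exactly what one would expect and is complete as stated.
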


\begin{definition}
An affine subspace $S$ is said to be \emph{parallel} to an affine subspace $ M $ if $S = M +a $ for some $ a \in \mathcal{H}$.
\end{definition}

\begin{fact} {\rm \cite[Theorem 1.2]{R2015}} 
\label{fac:AffinePointLinearSpace}
	Every affine subspace $S$ is parallel to a unique linear
	subspace $L$, which is given by 
	\begin{align*}
	(\forall y \in S) \quad L = S - y = S - S . 
	\end{align*}
\end{fact}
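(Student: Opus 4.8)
The plan is to fix an arbitrary $y \in S$ (possible since $S$ is nonempty), set $L := S - y$, and establish the claim in four steps: (1) $L$ is a linear subspace; (2) $S$ is parallel to $L$; (3) $L$ does not depend on the choice of $y$ and in fact $L = S-S$; and (4) $L$ is the \emph{only} linear subspace to which $S$ is parallel. For step (1) I would verify the three closure properties. Since $y\in S$, we have $0 = y-y \in L$. For scalar multiplication, given $s\in S$ and $\lambda\in\mathbb{R}$, write $\lambda(s-y) = \big(\lambda s + (1-\lambda)y\big) - y$; by the defining identity of an affine subspace, $\lambda s + (1-\lambda)y \in \lambda S + (1-\lambda)S = S$, hence $\lambda(s-y)\in S-y = L$. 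For addition, given $s_1,s_2\in S$, the midpoint $m := \tfrac12 s_1 + \tfrac12 s_2$ lies in $S$ (apply the identity with $\rho=\tfrac12$), and applying it with $\rho = 2$ gives $2m - y = 2m + (1-2)y \in 2S + (1-2)S = S$; since $2m - y = s_1 + s_2 - y$, we obtain $(s_1-y)+(s_2-y) = (2m-y) - y \in S-y = L$. Thus $L$ is a linear subspace.

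Step (2) is then immediate: $L + y = (S-y)+y = S$, so $S$ is parallel to $L$ by definition. For step (3), let $y'\in S$. For every $s\in S$ we have $s - y' = (s-y) + (y-y')$, where $s-y\in L$ and $y-y' = -(y'-y)\in L$ because $y'-y\in L$ and $L$ is a linear subspace; hence $s-y'\in L$, i.e.\ $S-y'\subseteq L$. Interchanging the roles of $y$ and $y'$ yields the reverse inclusion, so $S - y' = L$ for every $y'\in S$. Consequently $S - S = \bigcup_{y'\in S}(S-y') = L$, and the special case $y'=y$ recovers $S-y = L$; this establishes the displayed formula.

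For the uniqueness step (4), suppose $S$ is parallel to a linear subspace $M$, say $S = M + a$. Pick $y\in S$ and write $y = u + a$ with $u\in M$; then $M + y = M + u + a = M + a = S$ (using $M - u = M$ since $u\in M$), so $M = S - y$, which by step (3) equals $L$. Hence $L$ is the unique linear subspace parallel to $S$. I do not expect a genuine obstacle here; the only place needing a small trick is closure of $L$ under addition, where one must feed the non-convex coefficient $\rho = 2$ into the affineness identity rather than a convex combination — the rest is bookkeeping.
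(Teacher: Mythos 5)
Your argument is correct and complete: the verification that $S-y$ is a linear subspace directly from the identity $\rho S+(1-\rho)S=S$ (using $\rho=\tfrac12$ and then the non-convex value $\rho=2$ for closure under addition), the independence of the base point, the identification with $S-S$, and the uniqueness step all go through. Note that the paper does not prove this statement at all --- it is quoted as \cref{fac:AffinePointLinearSpace} from Rockafellar --- so there is no in-paper proof to compare against; your proof is essentially the standard one from that source, except that you check the subspace axioms by hand where Rockafellar invokes a preliminary result that an affine set containing the origin is a subspace, and your uniqueness argument (showing any $M$ with $S=M+a$ must equal $S-y$) replaces his observation that two parallel subspaces coincide. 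Both routes are equally valid; yours has the advantage of being fully self-contained relative to the definitions as stated in this paper.
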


\begin{definition} \cite[page 4]{R2015}
 The \emph{dimension} 
 of an affine subspace is defined to be the dimension of the 
 linear subspace parallel to it. 
\end{definition}

\begin{fact} 
{\rm \cite[page 7]{R2015}}
\label{fac:AffSubsExpre}
	Let $x_{1}, \ldots, x_{m} \in \mathcal{H}$. Then
	\begin{align*}
	\aff\{x_{1}, \ldots,  x_{m}\}
	=\Big\{\lambda_{1}x_{1}+\cdots +\lambda_{m}x_{m} ~\Big|~\lambda_{1},\ldots,\lambda_{m} \in \mathbb{R} ~\text{and}~\sum^{m}_{i=1} \lambda_{i}=1 \Big\}.
	\end{align*}
\end{fact}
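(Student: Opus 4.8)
The plan is to write $B$ for the set on the right-hand side, i.e., $B := \big\{ \sum_{i=1}^{m}\lambda_i x_i : \lambda_1,\dots,\lambda_m \in \mathbb{R},\ \sum_{i=1}^{m}\lambda_i = 1 \big\}$, and to prove the two inclusions $\aff\{x_1,\dots,x_m\} \subseteq B$ and $B \subseteq \aff\{x_1,\dots,x_m\}$, using throughout that $\aff\{x_1,\dots,x_m\}$ is by definition the \emph{smallest} affine subspace containing $\{x_1,\dots,x_m\}$.

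First I would verify that $B$ is an affine subspace containing $\{x_1,\dots,x_m\}$. Each $x_j$ lies in $B$ (take $\lambda_j = 1$ and $\lambda_i = 0$ for $i \neq j$), so $B \neq \varnothing$. For the identity $\rho B + (1-\rho)B = B$ required by the definition, the inclusion $\supseteq$ is trivial since $b = \rho b + (1-\rho)b$ for every $b \in B$, and $\subseteq$ holds because if $s = \sum_i \lambda_i x_i$ and $t = \sum_i \mu_i x_i$ with $\sum_i \lambda_i = \sum_i \mu_i = 1$, then $\rho s + (1-\rho) t = \sum_i\big(\rho\lambda_i + (1-\rho)\mu_i\big) x_i$ and $\sum_i\big(\rho\lambda_i + (1-\rho)\mu_i\big) = \rho + (1-\rho) = 1$. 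Hence $B$ is an affine subspace containing the $x_i$, and minimality of $\aff\{x_1,\dots,x_m\}$ yields $\aff\{x_1,\dots,x_m\} \subseteq B$.

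For the reverse inclusion it suffices to show that \emph{every} affine subspace $M$ with $\{x_1,\dots,x_m\} \subseteq M$ satisfies $B \subseteq M$, since $\aff\{x_1,\dots,x_m\}$ is such an $M$. The crux is the lemma that an affine subspace is closed under arbitrary affine combinations: whenever $y_1,\dots,y_k \in M$ and $\sum_{j=1}^{k}\alpha_j = 1$, one has $\sum_{j=1}^{k}\alpha_j y_j \in M$. I would prove this by induction on $k$, with $k = 1,2$ immediate from the definition of affine subspace. For the inductive step ($k \geq 3$), choose an index $\ell$ with $\alpha_\ell \neq 1$ --- such $\ell$ exists because $k \geq 2$ reals summing to $1$ cannot all equal $1$ --- put $\beta := 1 - \alpha_\ell = \sum_{j \neq \ell}\alpha_j \neq 0$, and rewrite $\sum_{j}\alpha_j y_j = \beta \big( \sum_{j \neq \ell} \tfrac{\alpha_j}{\beta} y_j \big) + (1-\beta) y_\ell$; the bracketed point lies in $M$ by the inductive hypothesis (its coefficients sum to $1$), so the whole expression lies in $\beta M + (1-\beta)M = M$. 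Applying the lemma to $y_1 = x_1, \dots, y_m = x_m$ shows $B \subseteq M$, and the proof is complete.

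I expect the only delicate point to be the induction in the last paragraph, specifically the need to peel off a coefficient $\alpha_\ell \neq 1$ so that $\beta \neq 0$ and the renormalization $\alpha_j \mapsto \alpha_j/\beta$ is legitimate; the rest is routine bookkeeping. An alternative route avoids this induction: fix $y := x_m$, invoke \cref{fac:AffinePointLinearSpace} to write $\aff\{x_1,\dots,x_m\} = x_m + L$ with $L$ a linear subspace, check that $L = \spn\{x_1 - x_m, \dots, x_{m-1} - x_m\}$ (the inclusion $\subseteq$ since each $x_i - x_m \in L$, and $\supseteq$ because $x_m + \spn\{x_i - x_m\}$ is an affine subspace --- a translate of a linear, hence affine, subspace --- that contains $\{x_1,\dots,x_m\}$), and then translate this span description into the claimed affine-combination form by setting $\lambda_m := 1 - \sum_{i<m}\lambda_i$.
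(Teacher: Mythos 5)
Your proof is correct. The paper itself does not prove this statement --- it is recorded as a \cref{fac:AffSubsExpre} with a citation to Rockafellar, so there is no in-paper argument to compare against. Your two-inclusion argument is the standard one and it correctly works from the paper's own definitions: you verify $\rho B + (1-\rho)B = B$ directly for the set $B$ of affine combinations (giving $\aff\{x_1,\dots,x_m\}\subseteq B$ by minimality), and your induction showing that an affine subspace is closed under arbitrary affine combinations is carried out carefully --- in particular you correctly isolate the one genuinely delicate point, namely choosing $\ell$ with $\alpha_\ell\neq 1$ so that the renormalization by $\beta = 1-\alpha_\ell$ is legitimate. The only blemish is cosmetic and occurs in your optional alternative sketch: the labels ``$\subseteq$'' and ``$\supseteq$'' attached to the two halves of $L=\spn\{x_1-x_m,\dots,x_{m-1}-x_m\}$ are swapped relative to the justifications you give for them, though the justifications themselves are the right ones.
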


Some algebraic calculations and \cref{fac:AffSubsExpre} yield the 
next result.

\begin{lemma} \label{lem:AffineHull}
	Let $x_{1}, \ldots,x_{m} \in \mathcal{H}$. Then for every
	$i_{0} \in \{2, \ldots, m\}$, we have
	\begin{align*}
	\aff\{x_{1}, \ldots, x_{m}\} 
	&~=x_{1} + \spn \{x_{2}-x_{1}, \ldots, x_{m}-x_{1}\}\\
	&~=x_{i_{0}}+\spn\{x_{1}-x_{i_{0}},\ldots,x_{i_{0}-1}-x_{i_{0}}, x_{i_{0}+1}-x_{i_{0}},\ldots,x_{m}-x_{i_{0}}\}.
	\end{align*} 
\end{lemma}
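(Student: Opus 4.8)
The plan is to reduce everything to \cref{fac:AffSubsExpre}, which already expresses $\aff\{x_1,\ldots,x_m\}$ as the set of affine combinations $\sum_{i=1}^m \lambda_i x_i$ with $\sum_{i=1}^m \lambda_i = 1$. The key observation is a simple algebraic rewriting: for any point $y$ and any scalars $\lambda_1,\ldots,\lambda_m$ with $\sum_i \lambda_i = 1$, one has $\sum_{i=1}^m \lambda_i x_i = y + \sum_{i=1}^m \lambda_i (x_i - y)$, because the $y$-terms contribute $(\sum_i \lambda_i) y = y$. Applying this with $y = x_1$ gives $\sum_i \lambda_i x_i = x_1 + \sum_{i=2}^m \lambda_i (x_i - x_1)$, since the $i=1$ summand $\lambda_1(x_1-x_1)$ vanishes; applying it with $y = x_{i_0}$ gives $\sum_i \lambda_i x_i = x_{i_0} + \sum_{i \neq i_0} \lambda_i (x_i - x_{i_0})$ for the same reason.

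From here I would establish each of the two claimed equalities as a set inclusion in both directions. For ``$\subseteq$'': given a point of $\aff\{x_1,\ldots,x_m\}$, write it via \cref{fac:AffSubsExpre} as $\sum_i \lambda_i x_i$ with $\sum_i \lambda_i = 1$, then use the rewriting above to exhibit it as $x_1$ plus a linear combination of the vectors $x_2-x_1,\ldots,x_m-x_1$ — hence a member of $x_1 + \spn\{x_2-x_1,\ldots,x_m-x_1\}$; the argument for the $x_{i_0}$-version is identical. For ``$\supseteq$'': take an arbitrary element $x_1 + \sum_{i=2}^m \mu_i(x_i-x_1)$ of $x_1 + \spn\{x_2-x_1,\ldots,x_m-x_1\}$, set $\lambda_i := \mu_i$ for $i \in \{2,\ldots,m\}$ and $\lambda_1 := 1 - \sum_{i=2}^m \mu_i$; then $\sum_{i=1}^m \lambda_i = 1$ and, reversing the rewriting, $\sum_i \lambda_i x_i = x_1 + \sum_{i=2}^m \mu_i(x_i-x_1)$, so the point lies in $\aff\{x_1,\ldots,x_m\}$ by \cref{fac:AffSubsExpre}. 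The $x_{i_0}$-case again goes the same way, choosing the free coefficient to be the one indexed by $i_0$.

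There is essentially no serious obstacle here; the only thing requiring mild care is bookkeeping of indices in the $x_{i_0}$-formula, where the spanning set skips the index $i_0$, so one must be careful that the ``missing'' coordinate is exactly the one used to absorb the constant into the affine-combination constraint $\sum_i \lambda_i = 1$. One clean way to present the whole argument at once is to prove the general identity $\aff\{x_1,\ldots,x_m\} = y + \spn\{x_i - y \mid i \in \{1,\ldots,m\}\}$ for an arbitrary $y \in \{x_1,\ldots,x_m\}$ (the term $x_i - y$ with $x_i = y$ being zero and thus harmless in the span), and then simply specialize $y = x_1$ and $y = x_{i_0}$. I would phrase the proof in that consolidated form to avoid repeating the same computation twice.
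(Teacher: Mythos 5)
Your argument is correct and is exactly the ``algebraic calculations'' combined with \cref{fac:AffSubsExpre} that the paper invokes (the paper does not even spell out the details). The consolidated identity $\aff\{x_1,\ldots,x_m\}=y+\spn\{x_i-y \mid 1\le i\le m\}$ for $y\in\{x_1,\ldots,x_m\}$ is a clean way to present it.
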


\begin{definition} 
{\rm \cite[page~6]{R2015}}
Let $x_{0}, x_{1}, \ldots, x_{m} \in \mathcal{H}$. The $m+1$ vectors $x_{0}, x_{1}, \ldots, x_{m}$ are said to be affinely independent if $\aff \{x_{0}, x_{1}, \ldots, x_{m}\}$ is $m$-dimensional.
\end{definition}

\begin{fact}
{\rm \cite[page 7]{R2015}}
\label{fac:AffinIndeLineInd}
Let $x_{1}, x_{2}, \ldots, x_{m} \in \mathcal{H}$. Then $x_{1}, x_{2}, \ldots,x_{m}$ are affinely independent if and only if
$ x_{2}-x_{1}, \ldots, x_{m}-x_{1}$ are linearly independent.

\end{fact}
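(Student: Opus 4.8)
The plan is to reduce affine independence of $x_1, \ldots, x_m$ to a statement about the dimension of a span, using \cref{lem:AffineHull}. Recall first that, in the notation of the definition preceding \cref{fac:AffinIndeLineInd}, a list of $m$ vectors $x_1, \ldots, x_m$ is affinely independent exactly when $\aff\{x_1, \ldots, x_m\}$ has dimension $m-1$ (the definition is stated for a list $x_0, \ldots, x_m$ of $m+1$ vectors and ``$m$-dimensional hull'', so after the obvious index shift ``$n$ vectors, $(n-1)$-dimensional hull'' is the correct reading). Thus the whole statement will follow once we establish that $\dim \aff\{x_1, \ldots, x_m\} = m-1$ if and only if $x_2 - x_1, \ldots, x_m - x_1$ are linearly independent.

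First I would invoke \cref{lem:AffineHull} to write $\aff\{x_1, \ldots, x_m\} = x_1 + L$, where $L := \spn\{x_2 - x_1, \ldots, x_m - x_1\}$ is a linear subspace. Since $x_1 \in \aff\{x_1, \ldots, x_m\}$, \cref{fac:AffinePointLinearSpace} applied with $y = x_1$ shows that the unique linear subspace parallel to $\aff\{x_1, \ldots, x_m\}$ is precisely $L$; hence, by the definition of the dimension of an affine subspace, $\dim \aff\{x_1, \ldots, x_m\} = \dim L$.

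Next I would use the elementary linear-algebra fact that a subspace spanned by $m-1$ vectors has dimension at most $m-1$, with equality holding if and only if those $m-1$ vectors are linearly independent. Chaining this with the previous paragraph yields: $x_1, \ldots, x_m$ affinely independent $\Leftrightarrow$ $\dim \aff\{x_1, \ldots, x_m\} = m-1$ $\Leftrightarrow$ $\dim L = m-1$ $\Leftrightarrow$ $x_2 - x_1, \ldots, x_m - x_1$ linearly independent, which is exactly the claim.

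There is no serious obstacle here; the only point requiring a little care is the bookkeeping of indices between the definition of affine independence (phrased for $m+1$ vectors $x_0, \ldots, x_m$ with an $m$-dimensional hull) and the present formulation (for $m$ vectors $x_1, \ldots, x_m$), together with the trivial observation that $x_1 \in \aff\{x_1, \ldots, x_m\}$, which licenses the use of \cref{fac:AffinePointLinearSpace} with $y = x_1$.
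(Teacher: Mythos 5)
Your proof is correct. The paper does not actually prove this statement---it is quoted as a Fact from Rockafellar \cite{R2015} without an internal argument---but your derivation is the standard one and is fully consistent with the paper's own apparatus: \cref{lem:AffineHull} identifies $\aff\{x_1,\ldots,x_m\}$ as $x_1+\spn\{x_2-x_1,\ldots,x_m-x_1\}$, \cref{fac:AffinePointLinearSpace} applied with $y=x_1$ identifies that span as the unique parallel linear subspace whose dimension defines $\dim \aff\{x_1,\ldots,x_m\}$, and you handle correctly the index shift from the definition's $m+1$ vectors (with $m$-dimensional hull) to the present $m$ vectors (with $(m-1)$-dimensional hull); the only step left implicit, that $m-1$ spanning vectors give a space of dimension $m-1$ exactly when they are linearly independent, is elementary.
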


\begin{lemma} \label{lem:UniqExpreAffIndp}
Let $x_{1}, \ldots, x_{m}$ be affinely independent vectors in $\mathcal{H}$. Let $p \in \aff \{x_{1}, \ldots, x_{m}\}$. Then there exists a unique vector $\begin{pmatrix} \alpha_{1}& \cdots& \alpha_{m}\end{pmatrix}^{\intercal} \in \mathbb{R}^{m}$ with $\sum^{m}_{i=1} \alpha_{i} =1$ such that
\begin{align*}
p= \alpha_{1} x_{1} + \cdots + \alpha_{m} x_{m}.
\end{align*}
\end{lemma}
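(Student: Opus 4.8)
The plan is to split the statement into existence and uniqueness, with existence essentially free from the earlier facts and uniqueness reduced to the linear independence characterization of affine independence.

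For existence, I would simply invoke \cref{fac:AffSubsExpre}: since $p \in \aff\{x_1, \ldots, x_m\}$, that fact provides scalars $\alpha_1, \ldots, \alpha_m \in \mathbb{R}$ with $\sum_{i=1}^m \alpha_i = 1$ and $p = \alpha_1 x_1 + \cdots + \alpha_m x_m$. No affine independence is needed here; it is only used for uniqueness.

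For uniqueness, I would suppose that $(\alpha_1, \ldots, \alpha_m)^\intercal$ and $(\beta_1, \ldots, \beta_m)^\intercal$ are both in $\mathbb{R}^m$, both sum to $1$, and both represent $p$, so that $\sum_{i=1}^m (\alpha_i - \beta_i) x_i = 0$ and $\sum_{i=1}^m (\alpha_i - \beta_i) = 0$. The key algebraic move is to eliminate the first coefficient: from the second equation, $\alpha_1 - \beta_1 = -\sum_{i=2}^m (\alpha_i - \beta_i)$, and substituting this into the first equation yields $\sum_{i=2}^m (\alpha_i - \beta_i)(x_i - x_1) = 0$. Now I would apply \cref{fac:AffinIndeLineInd}: affine independence of $x_1, \ldots, x_m$ is equivalent to linear independence of $x_2 - x_1, \ldots, x_m - x_1$, so every coefficient $\alpha_i - \beta_i$ for $i \in \{2, \ldots, m\}$ vanishes; feeding this back into $\sum_{i=1}^m (\alpha_i - \beta_i) = 0$ forces $\alpha_1 = \beta_1$ as well. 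Hence the representing vector is unique.

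I do not anticipate a genuine obstacle here; the only point requiring mild care is the bookkeeping in the elimination step that converts the affine dependence relation (coefficients summing to zero) into a linear dependence relation among the shifted vectors $x_i - x_1$, so that \cref{fac:AffinIndeLineInd} can be applied cleanly. Everything else is routine.
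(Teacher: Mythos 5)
Your proof is correct. The paper states \cref{lem:UniqExpreAffIndp} without any proof at all, so there is nothing to compare against; your argument (existence from \cref{fac:AffSubsExpre}, uniqueness by converting the difference of two affine representations into a linear dependence among $x_2-x_1,\ldots,x_m-x_1$ and invoking \cref{fac:AffinIndeLineInd}) is exactly the standard argument the authors evidently considered routine enough to omit, and every step checks out.
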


The following lemma will be useful later. 

\begin{lemma}\label{lem:AffineIndep:OpenSet}
Let
\begin{align*}
\mathcal{O}=\Big\{ (x_{1}, \ldots, x_{m-1}, x_{m}) \in  \mathcal{H}^{m} 
~\Big|~ x_{1}, \ldots, x_{m-1}, x_{m}~\text{are affinely independent} \Big\}.
\end{align*}
Then $\mathcal{O}$ is open.
\end{lemma}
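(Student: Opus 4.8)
The plan is to work in finite dimensions where the relevant linear algebra lives. Fix a point $(x_1,\ldots,x_m)\in\mathcal{O}$. By \cref{fac:AffinIndeLineInd}, affine independence of $x_1,\ldots,x_m$ is equivalent to linear independence of the $m-1$ vectors $v_i:=x_i-x_1$ for $i=2,\ldots,m$. So it suffices to show that the property ``$x_2-x_1,\ldots,x_m-x_1$ are linearly independent'' defines an open subset of $\mathcal{H}^m$; since the map $(x_1,\ldots,x_m)\mapsto(x_2-x_1,\ldots,x_m-x_1)$ is continuous (indeed linear and bounded) from $\mathcal{H}^m$ to $\mathcal{H}^{m-1}$, openness of $\mathcal{O}$ will follow by taking the preimage of the corresponding open set of $(m-1)$-tuples.

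Next I would reduce linear independence to the nonvanishing of a Gram determinant. For vectors $w_2,\ldots,w_m\in\mathcal{H}$, linear independence holds if and only if the Gram matrix $G=\big(\innp{w_i,w_j}\big)_{2\le i,j\le m}$ is nonsingular, i.e.\ $\det G\neq 0$. The entries $\innp{w_i,w_j}$ depend continuously on $(w_2,\ldots,w_m)$ (each inner product is continuous by Cauchy--Schwarz), and $\det$ is a polynomial in those entries, hence the composite map $(w_2,\ldots,w_m)\mapsto\det G$ is continuous from $\mathcal{H}^{m-1}$ to $\mathbb{R}$. Therefore the set where $\det G\neq 0$ is the preimage of $\mathbb{R}\setminus\{0\}$ under a continuous map, hence open.

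Combining the two steps: composing with the continuous (linear) translation map $(x_1,\ldots,x_m)\mapsto(x_2-x_1,\ldots,x_m-x_1)$, we get that
\begin{align*}
(x_1,\ldots,x_m)\longmapsto \det\Big(\innp{x_i-x_1,\,x_j-x_1}\Big)_{2\le i,j\le m}
\end{align*}
is a continuous function $\mathcal{H}^m\to\mathbb{R}$, and by \cref{fac:AffinIndeLineInd} the set $\mathcal{O}$ is exactly the set where this function is nonzero. Being the preimage of the open set $\mathbb{R}\setminus\{0\}$, $\mathcal{O}$ is open.

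The only mild subtlety — and the one place to be careful — is that $\mathcal{H}$ may be infinite-dimensional, so one cannot literally use ``the $m\times m$ matrix with columns $x_i-x_1$ has nonzero determinant.'' Passing through the Gram determinant circumvents this cleanly, since the Gram matrix is always $(m-1)\times(m-1)$ regardless of $\dim\mathcal{H}$, and the equivalence ``linearly independent $\iff$ Gram determinant nonzero'' is valid in any inner product space. Everything else is routine continuity bookkeeping; no estimate beyond Cauchy--Schwarz is needed.
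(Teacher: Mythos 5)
Your proof is correct, but it takes a genuinely different route from the paper's. The paper argues by contradiction with a sequential compactness argument: assuming a sequence of affinely dependent $m$-tuples converges to an affinely independent one, it normalizes the coefficient vectors of the dependence relations on the unit sphere of $\mathbb{R}^{m-1}$, extracts a convergent subsequence, and passes to the limit to produce a nontrivial dependence among $x_2-x_1,\ldots,x_m-x_1$, contradicting \cref{fac:AffinIndeLineInd}. You instead exhibit $\mathcal{O}$ directly as the preimage of $\mathbb{R}\smallsetminus\{0\}$ under the continuous map $(x_1,\ldots,x_m)\mapsto\det G(x_2-x_1,\ldots,x_m-x_1)$, using the equivalence between linear independence and nonsingularity of the Gram matrix (\cref{fact:Gram:inver}), together with joint continuity of the inner product and polynomiality of the determinant. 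Your argument is shorter, avoids the subsequence extraction, and produces an explicit continuous ``certificate'' of affine independence (the Gram determinant), which is the same quantity the paper later exploits in \cref{cor:GramInver:Continu}; its only cost is a forward dependence on the Gram-matrix material, which in the paper's layout appears in the subsection \emph{after} this lemma, whereas the paper's contradiction argument is self-contained at the level of elementary limits. You are also right that passing through the Gram determinant is what makes the argument dimension-free; that caveat is well placed.
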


\begin{proof}
Assume to the contrary that there exist $( x_{1}, \ldots,
x_{m-1}, x_{m})\in \mathcal{O}$ such that for every $k \in
\mathbb{N}\smallsetminus\{0\}$, there exist $( x^{(k)}_{1}, \ldots, x^{(k)}_{m-1}, x^{(k)}_{m}) \in B\Big(( x_{1}, \ldots, x_{m-1}, x_{m}); \frac{1}{k} \Big)$ such that $ x^{(k)}_{1}, \ldots, x^{(k)}_{m-1}, x^{(k)}_{m} $ are affinely dependent. By \cref{fac:AffinIndeLineInd}, for every $k$, 
there exists $b^{(k)}=(\beta^{(k)}_{1},
\beta^{(k)}_{2},\ldots,\beta^{(k)}_{m-1}) \in \mathbb{R}^{m-1}
\smallsetminus \{0\}$ such that
\begin{align}\label{eq:linindep:mitem}
\beta^{(k)}_{1}(x^{(k)}_{2}-x^{(k)}_{1})+\cdots+\beta^{(k)}_{m-1}(x^{(k)}_{m}-x^{(k)}_{1})=0.
\end{align}
Without loss of generality we assume
\begin{align}\label{eq:lem:lineindp:1}
(\forall k \in \mathbb{N}\smallsetminus\{0\} ) \quad \norm{b^{(k)}}^{2}=\sum^{m-1}_{i=1}(\beta^{(k)}_{i})^{2}= 1,
\end{align}
and there exists $\bar{b}=(\beta_{1}, \ldots, \beta_{m-1}) \in \mathbb{R}^{m-1}$ such that 
\begin{align*}
\lim_{k \rightarrow \infty} ( \beta^{(k)}_{1}, \ldots, \beta^{(k)}_{m-1} ) =\lim_{k \rightarrow \infty}b^{(k)} =\bar{b}=(\beta_{1}, \ldots, \beta_{m-1}).
\end{align*}
Let $k$ go to infinity in \cref{eq:lem:lineindp:1}, we get 
\begin{align*}
\norm{\bar{b}}^{2} = \beta^{2}_{1}+ \cdots+\beta^{2}_{m-1}=1,
\end{align*}
which yields that $(\beta_{1}, \ldots, \beta_{m-1})\neq 0$.

Let $k$ go to infinity in \cref{eq:linindep:mitem}, we obtain
\begin{align*}
\beta_{1}(x_{2}-x_{1})+\cdots+\beta_{m-1}(x_{m}-x_{1})=0, 
\end{align*}
which means that $x_{2}-x_{1}, \ldots, x_{m}-x_{1}$ are linearly dependent. By \cref{fac:AffinIndeLineInd},
it contradicts with the assumption that $x_{1}, \ldots, x_{m-1}, x_{m}$ are affinely independent. Hence $\mathcal{O}$ is indeed an open set.
\end{proof}

\begin{fact}  
{\rm \cite[Theorem~9.26]{D2010}}
\label{fact:BestAppAffSubspace}
Let $V$ be an affine subset of $\mathcal{H}$, say
$V=M+v$, where $M$ is a linear subspace of $\mathcal{H}$ 
and $v\in V$. Let $x \in \mathcal{H}$ and $y_{0} \in \mathcal{H}$. 
Then the following statements are equivalent:
\begin{enumerate}
\item  \label{fact:BestAppAffSubspace:i} $y_{0}=P_{V}(x)$.
\item  \label{fact:BestAppAffSubspace:ii} $x-y_{0} \in M^{\perp}$.
\item  \label{fact:BestAppAffSubspace:iii} $\innp{x-y_{0}, y-v}=0 ~~~~\mbox{for all}~ y \in V$.
\end{enumerate}
Moreover, 
\begin{align*}
P_{V}(x+e)=P_{V}(x)  ~~~~~~~~\mbox{for all} ~x \in X, e \in M^{\perp}.
\end{align*}
\end{fact}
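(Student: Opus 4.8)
Although this statement is quoted from \cite[Theorem~9.26]{D2010}, here is the route I would take to prove it. The whole assertion amounts to transporting the orthogonality characterization of the projection onto a \emph{closed linear subspace} to the affine setting by a single translation. Two points should be flagged at the outset. First, $M$ is tacitly assumed closed, so that $P_V$ and $P_M$ are everywhere defined and single-valued by the classical projection theorem; this is harmless in the paper's intended use, where $V$ is the affine hull of finitely many points and hence finite-dimensional. Second, the three conditions must be read with the standing requirement $y_0 \in V$: this is automatic once $y_0 = P_V(x)$, but --- as the trivial example $M = \{0\}$, $V = \{v\}$ shows (there $M^\perp = \mathcal{H}$, so (ii) holds for every $y_0$) --- it is genuinely needed for (ii) or (iii) to force (i).

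The workhorse is the variational characterization of the metric projection onto a nonempty closed convex set: $y_0 = P_V(x)$ if and only if $y_0 \in V$ and $\innp{x - y_0,\, y - y_0} \le 0$ for every $y \in V$. To obtain (i) $\Leftrightarrow$ (ii), assume $y_0 \in V$; then \cref{fac:AffinePointLinearSpace} applied to $V$ with base point $y_0$, together with the hypothesis $V = M + v$, gives $\{\, y - y_0 \mid y \in V \,\} = V - y_0 = M$. Since $M$ is a linear subspace, this family is invariant under negation, so the one-sided inequality above holds for all $y \in V$ precisely when $\innp{x - y_0, w} = 0$ for all $w \in M$, that is, precisely when $x - y_0 \in M^\perp$. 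Combined with the variational characterization, this is exactly (i) $\Leftrightarrow$ (ii).

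For (ii) $\Leftrightarrow$ (iii), apply \cref{fac:AffinePointLinearSpace} to $V$ with base point $v$ to get $\{\, y - v \mid y \in V \,\} = V - v = M$; hence ``$\innp{x - y_0, y - v} = 0$ for all $y \in V$'' is verbatim the assertion $x - y_0 \in M^\perp$. Finally, for the displayed ``moreover'' claim, fix $e \in M^\perp$ and put $y_0 = P_V(x)$; then $y_0 \in V$ and, by (i) $\Leftrightarrow$ (ii), $x - y_0 \in M^\perp$, so $(x + e) - y_0 = (x - y_0) + e \in M^\perp$ because $M^\perp$ is a linear subspace, and reading (i) $\Leftrightarrow$ (ii) backwards gives $y_0 = P_V(x + e)$.

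I do not anticipate a genuine obstacle. The two spots that deserve care are the standing closedness of $M$, which underwrites existence and uniqueness of every projection in play, and --- in the proof of (i) $\Leftrightarrow$ (ii) --- the passage from the one-sided variational inequality to the two-sided orthogonality relation; the latter step is precisely where the linear, rather than merely convex, structure of $M$ is used.
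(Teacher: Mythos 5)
The paper does not prove this statement at all: it is imported verbatim as a \emph{Fact} from \cite[Theorem~9.26]{D2010}, so there is no in-paper argument to compare against. Your proof is correct and is the standard one — reduce to the variational inequality $\innp{x-y_{0},y-y_{0}}\le 0$ for the closed convex set $V$, use $V-y_{0}=V-v=M$ (via \cref{fac:AffinePointLinearSpace}) and the sign-symmetry of the linear subspace $M$ to upgrade the inequality to orthogonality, and then read the equivalence backwards for the ``moreover'' clause. Your two caveats are both well taken and worth keeping: $M$ must be closed for $P_{V}$ to exist (automatic in this paper, where $V$ is a finitely generated affine hull), and conditions (ii)--(iii) only imply (i) under the additional standing hypothesis $y_{0}\in V$, which the quoted statement omits but which your counterexample $M=\{0\}$ shows is genuinely needed.
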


\subsection{The Gram matrix}

\begin{definition} \label{defn:GramMatrix}
Let $ a_{1}, \ldots, a_{m} \in \mathcal{H}$. Then 
\begin{align*}
G(a_{1}, \ldots, a_{m})=
\begin{pmatrix} 
\norm{a_{1}}^{2} &\innp{a_{1},a_{2}} & \cdots & \innp{a_{1}, a_{m}}  \\ 
\innp{a_{2},a_{1}} & \norm{a_{2}}^{2} & \cdots & \innp{a_{2},a_{m}} \\
\vdots & \vdots & ~~& \vdots \\
\innp{a_{m},a_{1}} & \innp{a_{m},a_{2}} & \cdots & \norm{a_{m}}^{2} \\
\end{pmatrix} 
\end{align*}
is called the \emph{Gram matrix} of $a_{1}, \ldots, a_{m}$.
\end{definition}

\begin{fact} 
{\rm \cite[Theorem~6.5-1]{K1978}}
\label{fact:Gram:inver}  
Let $ a_{1}, \ldots, a_{m} \in \mathcal{H}$. Then the Gram matrix
$G(a_{1}, \ldots, a_{m})$ is invertible if and only if the vectors $a_{1}, \ldots, a_{m}$ are linearly independent.
\end{fact}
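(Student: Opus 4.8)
\textit{Proof sketch.}
The plan is to reduce both implications to a single identity that relates the quadratic form of $G\coloneqq G(a_{1},\ldots,a_{m})$ to a norm in $\mathcal{H}$. For a column vector $c=\begin{pmatrix}c_{1}&\cdots&c_{m}\end{pmatrix}^{\intercal}\in\mathbb{R}^{m}$, the $j$-th coordinate of $Gc$ is
\begin{align*}
(Gc)_{j}=\sum_{i=1}^{m}\innp{a_{j},a_{i}}c_{i}=\Innp{a_{j},\sum_{i=1}^{m}c_{i}a_{i}},
\end{align*}
and therefore, using the bilinearity and symmetry of $\innp{\cdot,\cdot}$ (here it is essential that $\mathcal{H}$ is a \emph{real} Hilbert space),
\begin{align*}
c^{\intercal}Gc=\sum_{j=1}^{m}c_{j}(Gc)_{j}=\sum_{i=1}^{m}\sum_{j=1}^{m}c_{i}c_{j}\innp{a_{i},a_{j}}=\NNorm{\sum_{i=1}^{m}c_{i}a_{i}}^{2}.
\end{align*}
In particular $G$ is symmetric positive semidefinite.

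Granting this, I would first do the easy direction: if $a_{1},\ldots,a_{m}$ are linearly dependent, pick $c\neq 0$ with $\sum_{i=1}^{m}c_{i}a_{i}=0$; then the entrywise formula for $Gc$ gives $Gc=0$, so $G$ is singular, hence not invertible. For the converse, suppose $G$ is not invertible and choose $c\neq 0$ with $Gc=0$. Then the displayed identity yields $\norm{\sum_{i=1}^{m}c_{i}a_{i}}^{2}=c^{\intercal}Gc=0$, so $\sum_{i=1}^{m}c_{i}a_{i}=0$ with $c\neq 0$, i.e.\ $a_{1},\ldots,a_{m}$ are linearly dependent. Taking contrapositives of these two implications gives the stated biconditional. (Equivalently, one can phrase the whole argument by identifying $\ker G$ with $\{c\in\mathbb{R}^{m}\mid\sum_{i}c_{i}a_{i}=0\}$ and noting that $G$ is invertible iff this null space is trivial iff the only vanishing linear combination of $a_{1},\ldots,a_{m}$ is the trivial one.)

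There is essentially no obstacle here: the entire content is the quadratic-form identity above, and everything else is a one-line deduction. The only place that deserves a moment's care is the step $c^{\intercal}Gc=0\Rightarrow Gc=0$; in the argument above this is \emph{bypassed} entirely, since in the ``singular $\Rightarrow$ dependent'' direction I start from $Gc=0$ directly and only use $c^{\intercal}Gc=0$ as a consequence. (Should one wish to argue that way instead, it follows from positive semidefiniteness of $G$, e.g.\ from the Cauchy--Schwarz inequality applied to the positive semidefinite form $(b,c)\mapsto b^{\intercal}Gc$.)
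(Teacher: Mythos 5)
The paper does not prove this statement: it is recorded as a \emph{Fact} with a citation to Kreyszig \cite[Theorem~6.5-1]{K1978}, so there is no in-paper argument to compare against. Your proof is correct and complete, and it is the standard argument: the identity $c^{\intercal}Gc=\bigl\|\sum_{i}c_{i}a_{i}\bigr\|^{2}$ (valid because $\mathcal{H}$ is real, as you note) immediately identifies $\ker G$ with the set of coefficient vectors of vanishing linear combinations of $a_{1},\ldots,a_{m}$, and both implications follow. Your remark that the delicate-looking step $c^{\intercal}Gc=0\Rightarrow Gc=0$ is never actually needed (since in the singularity direction you start from $Gc=0$) is accurate and shows good awareness of where a sloppier write-up could go wrong. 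Nothing is missing.
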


\begin{remark} \label{note:AffinIndpDetermNonZero}
Let $x,y,z$ be affinely independent vectors in $\mathbb{R}^{3}$. 
Set $a=y-x$ and $b=z-x$. Then, by \cref{fac:AffinIndeLineInd} and \cref{fact:Gram:inver}, $\norm{a}^{2} \norm{b}^{2}-\innp{a,b}^{2} \neq 0$ and $\norm{a} \neq 0$, $\norm{b} \neq 0$. 
\end{remark}

\begin{proposition} \label{prop:GramMatrSymm}
Let $x_{1}, \ldots, x_{m} \in \mathcal{H}$. Then for every $k \in
\{2, \ldots, m\}$, we have
\begin{align*}
\det \Big(G(x_{2}-x_{1}, \ldots, x_{m}-x_{1}) \Big) 
= \det \Big(G(x_{1}-x_{k}, \ldots,x_{k-1}-x_{k}, x_{k+1} -x_{k},\ldots, x_{m}-x_{k}) \Big)
\end{align*}
\end{proposition}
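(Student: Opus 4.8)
The plan is to express the tuple of difference vectors on the right-hand side as a linear image of the tuple on the left by an $(m-1)\times(m-1)$ matrix $P$ with $\abs{\det P} = 1$, and to invoke the following elementary fact: if $a_1, \ldots, a_{m-1}$ are vectors in $\mathcal{H}$ and $c_j = \sum_{i=1}^{m-1} P_{ij}\, a_i$ for $j \in \{1, \ldots, m-1\}$, then a direct entrywise computation gives
\begin{align*}
G(c_1, \ldots, c_{m-1}) = P^{\intercal}\, G(a_1, \ldots, a_{m-1})\, P,
\end{align*}
so that $\det G(c_1, \ldots, c_{m-1}) = (\det P)^2 \det G(a_1, \ldots, a_{m-1})$ by multiplicativity of the determinant. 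Writing $a_i := x_{i+1} - x_1$, so that the left-hand tuple is $(a_1, \ldots, a_{m-1})$, and letting $(b_1, \ldots, b_{m-1})$ denote the right-hand tuple, it then suffices to produce the matrix $P$ with $b_j = \sum_{i=1}^{m-1} P_{ij}\, a_i$ and $\abs{\det P} = 1$.

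Next I would carry out the bookkeeping. Since $x_k - x_1 = a_{k-1}$ and $x_i - x_1 = a_{i-1}$ for $i \in \{2, \ldots, m\}\smallsetminus\{k\}$, one gets $x_1 - x_k = -a_{k-1}$ and $x_i - x_k = a_{i-1} - a_{k-1}$ for $i \in \{2, \ldots, m\}\smallsetminus\{k\}$. Listing these vectors in the order prescribed by the statement,
\begin{align*}
b_1 = -a_{k-1}, \qquad b_j = a_{j-1} - a_{k-1} \ \ (2 \le j \le k-1), \qquad b_j = a_j - a_{k-1} \ \ (k \le j \le m-1).
\end{align*}
The associated matrix $P$ is obtained from the identity matrix by (i) the cyclic permutation of columns that moves column $k-1$ into the first position, (ii) negating that (new) first column, and (iii) adding that column to each of the remaining columns. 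Each of these is an elementary column operation of determinant $\pm 1$, hence $\det P = \pm 1$ (in fact $\det P = (-1)^{k+1}$, but only $(\det P)^2 = 1$ is used); combined with the first paragraph this gives $\det G(b_1, \ldots, b_{m-1}) = \det G(a_1, \ldots, a_{m-1})$, which is the claim. The case $k = 1$ (not required here) is trivial, since then the two tuples coincide.

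The main obstacle is the index bookkeeping in the second step: one must check that exactly $m-1$ vectors occur on each side, track the relabeling that arises after deleting $x_k$ (as in \cref{lem:AffineHull}), and correctly identify $x_k - x_1$ with the negated ``base'' vector $-a_{k-1}$ on the right; everything else is a one-line consequence of $\det(P^{\intercal} G P) = (\det P)^2 \det G$. I would also point out that no affine-independence hypothesis is needed here: the identity $G(c_\bullet) = P^{\intercal} G(a_\bullet) P$ holds in general, and when $x_1, \ldots, x_m$ are affinely dependent both Gram determinants vanish (by \cref{fac:AffinIndeLineInd} and \cref{fact:Gram:inver}). Equivalently, the argument can be phrased as the observation that the Gram determinant of a tuple of vectors is unchanged under permuting its entries, under negating an entry, and under replacing an entry $c_i$ by $c_i + c_j$ with $i \neq j$, together with the remark that the right-hand tuple is obtained from the left-hand one by finitely many such moves.
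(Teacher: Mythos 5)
Your proof is correct, and it is essentially the same argument as the paper's: the paper performs the very row/column operations (subtract the first row/column, negate it, permute) that constitute your factorization of $P$, so your congruence identity $G(b_\bullet)=P^{\intercal}G(a_\bullet)P$ with $(\det P)^2=1$ is just a cleaner packaging of the paper's explicit elementary-operation computation. The index bookkeeping you flag as the main obstacle checks out ($b_1=-a_{k-1}$, $b_j=a_{j-1}-a_{k-1}$ for $2\le j\le k-1$, $b_j=a_j-a_{k-1}$ for $k\le j\le m-1$), so nothing is missing.
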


\begin{proof}
By \cref{defn:GramMatrix},  $G(x_{1}-x_{k}, \ldots,x_{k-1}-x_{k}, x_{k+1} -x_{k},\ldots, x_{m}-x_{k})$ is 
\begin{align}\label{eq:prop:Gram:x1k}
\begin{pmatrix}
\innp{x_{1}-x_{k},x_{1}-x_{k}}& \cdots & \innp{x_{1}-x_{k},x_{k-1}-x_{k}} & \innp{x_{1}-x_{k},x_{k+1}-x_{k}} &\cdots &\innp{x_{1}-x_{k},x_{m}-x_{k}} \\
\innp{x_{2}-x_{k},x_{1}-x_{k}}& \cdots & \innp{x_{2}-x_{k},x_{k-1}-x_{k}} & \innp{x_{2}-x_{k},x_{k+1}-x_{k}} &\cdots &\innp{x_{2}-x_{k},x_{m}-x_{k}} \\
\vdots & \cdots & \vdots & \vdots &\cdots &\vdots \\
\innp{x_{k-1}-x_{k},x_{1}-x_{k}}& \cdots & \innp{x_{k-1}-x_{k},x_{k-1}-x_{k}} & \innp{x_{k-1}-x_{k},x_{k+1}-x_{k}} &\cdots &\innp{x_{k-1}-x_{k},x_{m}-x_{k}} \\
\innp{x_{k+1}-x_{k},x_{1}-x_{k}}& \cdots & \innp{x_{k+1}-x_{k},x_{k-1}-x_{k}} & \innp{x_{k+1}-x_{k},x_{k+1}-x_{k}} &\cdots &\innp{x_{k+1}-x_{k},x_{m}-x_{k}} \\
\vdots & \cdots & \vdots & \vdots &\cdots &\vdots \\
\innp{x_{m}-x_{k},x_{1}-x_{k}}& \cdots & \innp{x_{m}-x_{k},x_{k-1}-x_{k}} & \innp{x_{m}-x_{k},x_{k+1}-x_{k}} &\cdots &\innp{x_{m}-x_{k},x_{m}-x_{k}} \\
\end{pmatrix}. 
\end{align}
In \cref{eq:prop:Gram:x1k}, we perform the following 
elementary row and column operations: 
For every $i \in \{2,3,\ldots, m-1\}$, 
subtract the $1^{\text{st}}$ row from the $i^{\text{th}}$ row,
and then subtract 
the $1^{\text{st}}$ column from the $i^\text{th}$ column.
Then multiply $1^{\text{st}}$ row and $1^{\text{st}}$ column by
$-1$, respectively. It follows that 
the determinant of \cref{eq:prop:Gram:x1k} equals the determinant of
\begin{align}\label{eq:prop:Gram:xk1}
\begin{pmatrix}
\innp{x_{k}-x_{1},x_{k}-x_{1}}& \cdots & \innp{x_{k}-x_{1},x_{k-1}-x_{1}} & \innp{x_{k}-x_{1},x_{k+1}-x_{1}} &\cdots &\innp{x_{k}-x_{1},x_{m}-x_{1}} \\
\innp{x_{2}-x_{1},x_{k}-x_{1}}& \cdots & \innp{x_{2}-x_{1},x_{k-1}-x_{1}} & \innp{x_{2}-x_{1},x_{k+1}-x_{1}} &\cdots &\innp{x_{2}-x_{1},x_{m}-x_{1}} \\
\vdots & \cdots & \vdots & \vdots &\cdots &\vdots \\
\innp{x_{k-1}-x_{1},x_{k}-x_{1}}& \cdots & \innp{x_{k-1}-x_{1},x_{k-1}-x_{1}} & \innp{x_{k-1}-x_{1},x_{k+1}-x_{1}} &\cdots &\innp{x_{k-1}-x_{1},x_{m}-x_{1}} \\
\innp{x_{k+1}-x_{1},x_{k}-x_{1}}& \cdots & \innp{x_{k+1}-x_{1},x_{k-1}-x_{1}} & \innp{x_{k+1}-x_{1},x_{k+1}-x_{1}} &\cdots &\innp{x_{k+1}-x_{1},x_{m}-x_{1}} \\
\vdots & \cdots & \vdots & \vdots &\cdots &\vdots \\
\innp{x_{m}-x_{1},x_{k}-x_{1}}& \cdots & \innp{x_{m}-x_{1},x_{k-1}-x_{1}} & \innp{x_{m}-x_{1},x_{k+1}-x_{1}} &\cdots &\innp{x_{m}-x_{1},x_{m}-x_{1}} \\
\end{pmatrix}.
\end{align}
In \cref{eq:prop:Gram:xk1}, we interchange $i^{\text{th}}$ row
and $(i+1)^{\text{th}}$ successively for $i=1, \ldots,k-2$. In
addition, we interchange $j^{\text{th}}$ column and
$(j+1)^{\text{th}}$ column successively for $j=1, \ldots,k-2$.
Then the resulting matrix is just $G(x_{2}-x_{1}, \ldots,
x_{m}-x_{1})$. 
Because the number of interchange we performed is even, the 
determinant is unchanged. Therefore, we obtain
 \begin{align*}
  \det \Big(G(x_{1}-x_{k}, \ldots,x_{k-1}-x_{k}, x_{k+1} -x_{k},\ldots, x_{m}-x_{k}) \Big) = \det \Big(G(x_{2}-x_{1}, \ldots, x_{m}-x_{1}) \Big)
 \end{align*}
 as claimed. 
\end{proof}

\begin{fact}
{\rm \cite[page 16]{T2008}} 
\label{fact:MatrixDeterInverse}
Let $S = \{ A \in \mathbb{R}^{n\times n} ~|~A ~\text{is invertible}~ \}$. Then the mapping $S \to S : A \mapsto A^{-1}$ is continuous.
\end{fact}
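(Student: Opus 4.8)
The plan is to use the classical adjugate (cofactor) formula for the inverse: for every invertible $A \in \mathbb{R}^{n\times n}$ one has $A^{-1} = (\det A)^{-1}\adj(A)$, where $\adj(A)$ denotes the transpose of the cofactor matrix of $A$. From this identity, continuity will follow by exhibiting the right-hand side as a composition and quotient of manifestly continuous maps on the finite-dimensional space $\mathbb{R}^{n\times n}$.

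First I would record the two polynomiality facts. The determinant $A \mapsto \det A$ is a polynomial in the $n^{2}$ entries of $A$ --- explicitly $\det A = \sum_{\sigma} \operatorname{sgn}(\sigma)\prod_{i=1}^{n} A_{i,\sigma(i)}$, the sum running over all permutations $\sigma$ of $\{1,\dots,n\}$ --- hence it is continuous on all of $\mathbb{R}^{n\times n}$. Likewise, each entry of $\adj(A)$ is, up to sign, an $(n-1)\times(n-1)$ minor of $A$, again a polynomial in the entries of $A$, so $A \mapsto \adj(A)$ is continuous from $\mathbb{R}^{n\times n}$ to $\mathbb{R}^{n\times n}$ (with respect to any of the equivalent norms on this finite-dimensional space).

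Next I would assemble these on $S$. Since $\det A \neq 0$ for every $A \in S$ by definition, the scalar-valued map $A \mapsto (\det A)^{-1}$ is continuous on $S$, being the composition of the continuous map $A \mapsto \det A$ with the reciprocal $t \mapsto 1/t$, which is continuous on $\mathbb{R}\smallsetminus\{0\}$. Multiplying by the continuous matrix-valued map $A \mapsto \adj(A)$ and using continuity of scalar multiplication $\mathbb{R}\times\mathbb{R}^{n\times n}\to\mathbb{R}^{n\times n}$, we conclude that $A \mapsto (\det A)^{-1}\adj(A) = A^{-1}$ is continuous on $S$. Finally, $A^{-1}$ is itself invertible whenever $A$ is, so the map does take values in $S$, yielding the asserted continuous self-map $S \to S$.

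The only point needing care --- more a bookkeeping matter than a genuine obstacle --- is the justification of the adjugate identity $A\,\adj(A) = (\det A)\,\Id$, which is the standard cofactor-expansion identity valid for every square matrix; combined with invertibility of $A$ this gives $A^{-1} = (\det A)^{-1}\adj(A)$. If one prefers to avoid the adjugate altogether, an alternative route is the Neumann-series estimate: for $A \in S$ and $B \in \mathbb{R}^{n\times n}$ with $\norm{A-B} < 1/\norm{A^{-1}}$, one has $B \in S$ with $B^{-1} = \sum_{k=0}^{\infty}\big(A^{-1}(A-B)\big)^{k}A^{-1}$, whence $\norm{B^{-1}-A^{-1}} \le \norm{A^{-1}}^{2}\norm{A-B}\,/\,\big(1-\norm{A^{-1}}\,\norm{A-B}\big) \to 0$ as $B \to A$; the polynomial-quotient argument is, however, the shorter one in this finite-dimensional setting.
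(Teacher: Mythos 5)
Your argument is correct. The paper does not prove this statement at all --- it is imported as a Fact with a citation to the literature --- and your adjugate-formula argument ($A^{-1}=(\det A)^{-1}\adj(A)$, with determinant and cofactors polynomial and hence continuous, and $\det A\neq 0$ on $S$) is exactly the standard proof one would supply, with the Neumann-series estimate as a valid alternative.
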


\begin{fact}[Cramer's rule]  
{\rm \cite[page 476]{MC2000}}
\label{fact:CramerRule}
If $A \in \mathbb{R}^{n\times n}$ is invertible and $Ax=b$, then for every $i \in \{1, \ldots,n\}$,
we have 
\begin{align*}
x_{i} = \frac{\det(A_{i})}{\det(A)},
\end{align*}
where $A_{i} =[ A_{*,1}|\cdots|A_{*,i-1}|b|A_{*,i+1}|\cdots|A_{*,n}]$. That is, $A_{i}$ is identical to $A$ except that column $A_{*,i}$ has been replaced by $b$.
\end{fact}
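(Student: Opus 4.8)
The plan is to deduce the formula from the multiplicativity of the determinant via a single clever factorization. Fix $i \in \{1,\ldots,n\}$ and let $e_{1},\ldots,e_{n}$ denote the standard basis of $\mathbb{R}^{n}$, viewed as column vectors. I would introduce the auxiliary matrix $X_{i} = [\,e_{1} \mid \cdots \mid e_{i-1} \mid x \mid e_{i+1} \mid \cdots \mid e_{n}\,]$, that is, the identity matrix $\Id$ with its $i$-th column replaced by the solution vector $x$. The entire argument then rests on two elementary computations with this matrix.

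First I would check that $A X_{i} = A_{i}$. Multiplying $A$ by $X_{i}$ one column at a time: for $j \neq i$ the $j$-th column of $A X_{i}$ is $A e_{j} = A_{*,j}$, while the $i$-th column of $A X_{i}$ is $A x = b$ by the hypothesis $Ax = b$. Hence $A X_{i}$ is precisely the matrix $A$ with its $i$-th column replaced by $b$, i.e., $A X_{i} = A_{i}$. Second I would compute $\det(X_{i}) = x_{i}$: expanding the determinant along the $i$-th row of $X_{i}$ — whose only possibly nonzero entry is $x_{i}$ in position $(i,i)$, because every column of $X_{i}$ other than the $i$-th is a standard basis vector and hence has a zero in coordinate $i$ — gives $\det(X_{i}) = x_{i}\, M_{ii}$, where $M_{ii}$ is the minor obtained by deleting row $i$ and column $i$ from $X_{i}$. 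That deletion turns $X_{i}$ into the $(n-1)\times(n-1)$ identity matrix, so $M_{ii} = 1$ and $\det(X_{i}) = x_{i}$.

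Combining the two computations with multiplicativity of the determinant yields $\det(A)\, x_{i} = \det(A)\det(X_{i}) = \det(A X_{i}) = \det(A_{i})$; since $A$ is invertible we have $\det(A) \neq 0$, and dividing gives $x_{i} = \det(A_{i})/\det(A)$ for every $i \in \{1,\ldots,n\}$. The only step requiring any care is the evaluation $\det(X_{i}) = x_{i}$; if one wishes to avoid cofactor expansion, one may instead clear the off-diagonal entries in the $i$-th column of $X_{i}$ by elementary column operations (which do not change the determinant) to reach a triangular matrix whose diagonal is $1,\ldots,1,x_{i},1,\ldots,1$, or argue by induction on $n$. Everything else is a direct invocation of the multiplicativity of $\det$ together with $\det(A) \neq 0$.
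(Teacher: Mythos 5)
Your argument is correct. Note that the paper does not prove this statement at all: it is recorded as a \emph{Fact} with a citation to Meyer's \emph{Matrix Analysis and Applied Linear Algebra}, so there is no in-paper proof to compare against. Your proof --- forming $X_{i}$ as the identity with its $i$-th column replaced by $x$, verifying $AX_{i}=A_{i}$ column by column and $\det(X_{i})=x_{i}$ by expansion along the $i$-th row, and then invoking multiplicativity of the determinant together with $\det(A)\neq 0$ --- is the standard textbook derivation (essentially the one in the cited reference) and every step checks out; in particular the key observation that every column of $X_{i}$ other than the $i$-th has a zero in coordinate $i$ is exactly what makes the cofactor expansion collapse to $x_{i}$.
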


\begin{corollary} \label{cor:GramInver:Continu}
Let $\{x_{1}, \ldots, x_{m} \} \subseteq \mathcal{H}$ with $x_{1}, \ldots, x_{m} $ being affinely independent. Let $\big( (x^{(k)}_{1}, \ldots,x^{(k)}_{m})  \big)_{k \in \mathbb{N}} \subseteq \mathcal{H}^{m}$ such that
\begin{align*}
\lim_{k \rightarrow \infty} (x^{(k)}_{1}, \ldots,x^{(k)}_{m}) = (x_{1}, \ldots, x_{m}).
\end{align*}
Then 
\begin{align*}
G(x_{2}-x_{1}, \ldots, x_{m}-x_{1})^{-1}= \lim_{k \rightarrow \infty}G(x^{(k)}_{2}-x_{1}^{(k)}, \ldots,x^{(k)}_{m}-x_{1}^{(k)})^{-1}.
\end{align*}
\end{corollary}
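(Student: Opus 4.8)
The plan is to reduce the statement to the continuity of matrix inversion on the open set of invertible matrices, i.e.\ to \cref{fact:MatrixDeterInverse}, once we know that all the Gram matrices involved are invertible and that the perturbed ones converge to the limiting one.

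First I would record that the limiting Gram matrix is invertible. Since $x_{1}, \ldots, x_{m}$ are affinely independent, \cref{fac:AffinIndeLineInd} shows that $x_{2}-x_{1}, \ldots, x_{m}-x_{1}$ are linearly independent, and hence \cref{fact:Gram:inver} gives that $G(x_{2}-x_{1}, \ldots, x_{m}-x_{1}) \in \mathbb{R}^{(m-1)\times(m-1)}$ is invertible. Next I would establish entrywise convergence of the perturbed Gram matrices: for each $k$, the $(i,j)$ entry of $G(x^{(k)}_{2}-x_{1}^{(k)}, \ldots, x^{(k)}_{m}-x_{1}^{(k)})$ is $\innp{x^{(k)}_{i+1}-x^{(k)}_{1},\, x^{(k)}_{j+1}-x^{(k)}_{1}}$, and because $\lim_{k\to\infty}(x^{(k)}_{1}, \ldots, x^{(k)}_{m}) = (x_{1}, \ldots, x_{m})$ in $\mathcal{H}^{m}$ and the inner product is jointly continuous, this entry converges to $\innp{x_{i+1}-x_{1},\, x_{j+1}-x_{1}}$, the corresponding entry of $G(x_{2}-x_{1}, \ldots, x_{m}-x_{1})$. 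Thus $G(x^{(k)}_{2}-x_{1}^{(k)}, \ldots, x^{(k)}_{m}-x_{1}^{(k)}) \to G(x_{2}-x_{1}, \ldots, x_{m}-x_{1})$ in $\mathbb{R}^{(m-1)\times(m-1)}$.

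Then I would argue that the perturbed matrices are invertible for all large $k$. By \cref{lem:AffineIndep:OpenSet}, the set $\mathcal{O}$ of affinely independent $m$-tuples is open, so there is $N$ with $(x^{(k)}_{1}, \ldots, x^{(k)}_{m}) \in \mathcal{O}$ for all $k \ge N$; applying \cref{fac:AffinIndeLineInd} and \cref{fact:Gram:inver} once more shows $G(x^{(k)}_{2}-x_{1}^{(k)}, \ldots, x^{(k)}_{m}-x_{1}^{(k)})$ is invertible for every $k \ge N$. (Alternatively, since $\det$ is continuous and nonzero at the limit by the previous paragraph, it is nonzero along the tail of the sequence.)

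Finally, combining these facts with \cref{fact:MatrixDeterInverse} --- the continuity of $A \mapsto A^{-1}$ on the invertible matrices --- yields $\lim_{k\to\infty} G(x^{(k)}_{2}-x_{1}^{(k)}, \ldots, x^{(k)}_{m}-x_{1}^{(k)})^{-1} = G(x_{2}-x_{1}, \ldots, x_{m}-x_{1})^{-1}$, which is the assertion. The one point requiring care is the eventual invertibility of the perturbed Gram matrices, which is precisely where \cref{lem:AffineIndep:OpenSet} (or the continuity of the determinant) is needed; the remaining ingredients are just continuity of the inner product and of matrix inversion.
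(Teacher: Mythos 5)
Your proposal is correct and follows essentially the same route as the paper: eventual affine independence via \cref{lem:AffineIndep:OpenSet}, invertibility of the Gram matrices via \cref{fac:AffinIndeLineInd} and \cref{fact:Gram:inver}, and then continuity of matrix inversion (\cref{fact:MatrixDeterInverse}). You are slightly more careful than the paper in spelling out the entrywise convergence of the perturbed Gram matrices via joint continuity of the inner product, a step the paper leaves implicit when it says the result "follows directly" from \cref{fact:MatrixDeterInverse}.
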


\begin{proof}
By \cref{lem:AffineIndep:OpenSet}, we know there exists $K \in \mathbb{N}$ such that 
\begin{align*}
(\forall k \geq K) \quad x^{(k)}_{1}, \ldots, x^{(k)}_{m} ~\text{are affinely independent}.
\end{align*}
Using \cref{fac:AffinIndeLineInd}, we know 
\begin{align*}
x_{2}-x_{1}, \ldots, x_{m}-x_{1} ~\text{are linearly independent},
\end{align*}
and
\begin{align*}
(\forall k \geq K) \quad x^{(k)}_{2}-x^{(k)}_{1}, \ldots, x^{(k)}_{m}-x^{(k)}_{1}  ~\text{are linearly independent}.
\end{align*} 
Hence \cref{fact:Gram:inver} tells us that $G(x_{2}-x_{1}, \ldots, x_{m}-x_{1})^{-1}$ and $(\forall k \geq K)$ $G(x^{(k)}_{2}-x_{1}^{(k)}, \ldots,x^{(k)}_{m}-x_{1}^{(k)})^{-1}$  exist. Therefore, the required result follows directly from \cref{fact:MatrixDeterInverse}.
\end{proof}

\section{The circumcenter} 
\label{sec:DefinCircOper}

Before we are able to define the main actor in this paper, the
circumcenter operator, we shall require a few more more results. 

\begin{proposition} \label{prop:NormEqInnNorm}
Let $p,x,y \in \mathcal{H}$, and set 
$U=\aff\{x,y\}$. Then the following are equivalent:
\begin{enumerate} 
\item \label{prop:NormEqInnNorm:Norm} $\norm{p-x} =\norm{p-y}$.
\item \label{prop:NormEqInnNorm:NorInnp} $\innp{p-x,y-x} =\frac{1}{2} \norm{y-x}^{2}$.
\item \label{prop:NormEqInnNorm:ProjeEqu}$P_{U}(p)=\frac{x+y}{2}$.
\item \label{prop:NormEqInnNorm:ProjeBelo} $p \in \frac{x+y}{2} +(U-U)^{\perp}$.
\end{enumerate} 
\end{proposition}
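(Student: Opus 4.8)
The plan is to prove the chain $\ref{prop:NormEqInnNorm:Norm}\Leftrightarrow\ref{prop:NormEqInnNorm:NorInnp}\Leftrightarrow\ref{prop:NormEqInnNorm:ProjeBelo}\Leftrightarrow\ref{prop:NormEqInnNorm:ProjeEqu}$, each link being either an elementary inner-product identity or a single appeal to \cref{fact:BestAppAffSubspace}. Two preliminary observations set the stage. First, by \cref{lem:AffineHull} applied with $m=2$, we have $U = \aff\{x,y\} = x + \spn\{y-x\}$; hence, by \cref{fac:AffinePointLinearSpace}, the linear subspace parallel to $U$ is $U - U = \spn\{y-x\}$, so that $(U-U)^{\perp} = \{y-x\}^{\perp}$. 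Second, $\tfrac{x+y}{2} = \tfrac12 x + \tfrac12 y$ is an affine combination of $x$ and $y$, so $\tfrac{x+y}{2}\in U$ by \cref{fac:AffSubsExpre}. (If $x=y$, then $U=\{x\}$, all four assertions hold trivially, and none of the identities below degenerates, so that case needs no separate treatment.)

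For $\ref{prop:NormEqInnNorm:Norm}\Leftrightarrow\ref{prop:NormEqInnNorm:NorInnp}$ I would expand $\norm{p-y}^{2} = \norm{(p-x)-(y-x)}^{2}$ to obtain the identity
\begin{align*}
\norm{p-x}^{2} - \norm{p-y}^{2} = 2\innp{p-x,\,y-x} - \norm{y-x}^{2}.
\end{align*}
Since $\norm{p-x} = \norm{p-y}$ holds if and only if $\norm{p-x}^{2} = \norm{p-y}^{2}$, the left-hand side vanishes exactly when $\innp{p-x,\,y-x} = \tfrac12\norm{y-x}^{2}$, which is \ref{prop:NormEqInnNorm:NorInnp}.

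For $\ref{prop:NormEqInnNorm:NorInnp}\Leftrightarrow\ref{prop:NormEqInnNorm:ProjeBelo}$ I would rewrite $\tfrac12\norm{y-x}^{2} = \tfrac12\innp{y-x,\,y-x}$, move it to the left, and use $p - x - \tfrac12(y-x) = p - \tfrac{x+y}{2}$:
\begin{align*}
\innp{p-x,\,y-x} = \tfrac12\norm{y-x}^{2} &\iff \Innp{p - \tfrac{x+y}{2},\, y-x} = 0\\
&\iff p - \tfrac{x+y}{2} \in \{y-x\}^{\perp} = (U-U)^{\perp},
\end{align*}
and the last statement is precisely \ref{prop:NormEqInnNorm:ProjeBelo}.

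Finally, $\ref{prop:NormEqInnNorm:ProjeBelo}\Leftrightarrow\ref{prop:NormEqInnNorm:ProjeEqu}$ follows from \cref{fact:BestAppAffSubspace} applied with the affine set $V = U$, its parallel subspace $M = U - U$, any $v \in U$, the point $p$ in the role of ``$x$'' there, and the candidate $y_{0} = \tfrac{x+y}{2}\in U$: the equivalence $\ref{fact:BestAppAffSubspace:i}\Leftrightarrow\ref{fact:BestAppAffSubspace:ii}$ of that fact says that $P_{U}(p) = \tfrac{x+y}{2}$ if and only if $p - \tfrac{x+y}{2} \in M^{\perp} = (U-U)^{\perp}$, which is $\ref{prop:NormEqInnNorm:ProjeEqu}\Leftrightarrow\ref{prop:NormEqInnNorm:ProjeBelo}$. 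The only thing requiring attention is the bookkeeping around \cref{fact:BestAppAffSubspace} --- confirming that the chosen $y_{0}$ genuinely lies in $U$ and that $U - U$ is indeed the linear subspace parallel to $U$ --- but there is no genuine obstacle; the whole proposition reduces to a short sequence of algebraic identities together with one invocation of the best-approximation characterization.
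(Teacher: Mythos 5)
Your proposal is correct and takes essentially the same approach as the paper: the same polarization identity yields \ref{prop:NormEqInnNorm:Norm}$\Leftrightarrow$\ref{prop:NormEqInnNorm:NorInnp}, and \cref{fact:BestAppAffSubspace} supplies the projection characterization, with $\tfrac{x+y}{2}\in U$ duly checked. The only cosmetic difference is the ordering of the chain: you link \ref{prop:NormEqInnNorm:NorInnp} directly to \ref{prop:NormEqInnNorm:ProjeBelo} via the explicit identification $(U-U)^{\perp}=\{y-x\}^{\perp}$ and then invoke the orthogonality characterization once, whereas the paper first proves \ref{prop:NormEqInnNorm:NorInnp}$\Leftrightarrow$\ref{prop:NormEqInnNorm:ProjeEqu} using the variational characterization of the projection and then \ref{prop:NormEqInnNorm:ProjeEqu}$\Leftrightarrow$\ref{prop:NormEqInnNorm:ProjeBelo}; the mathematical content is identical.
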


\begin{proof}
It is clear that
\begin{align*}
\norm{p-x} =\norm{p-y}
&\Longleftrightarrow \norm{p-x}^{2} =\norm{(p-x)+(x-y)}^{2}\\
&\Longleftrightarrow \norm{p-x}^{2} =\norm{p-x}^{2} + 2\innp{p-x,x-y} +\norm{x-y}^{2}\\
&\Longleftrightarrow \innp{p-x,y-x} =\frac{1}{2} \norm{y-x}^{2}. 
\end{align*}
Hence we get \cref{prop:NormEqInnNorm:Norm} $\Leftrightarrow$ \cref{prop:NormEqInnNorm:NorInnp}.

Notice $\frac{x+y}{2} \in U$. Now
\begin{align*}
\frac{x+y}{2} = P_{U}(p) \Longleftrightarrow & (\forall u \in U) \quad \innp{p- \frac{x+y}{2}, u -x}=0 \quad (\text{by \cref{fact:BestAppAffSubspace:i}} \Leftrightarrow \text{\cref{fact:BestAppAffSubspace:iii} in \cref{fact:BestAppAffSubspace}})\\
\Longleftrightarrow &  (\forall \alpha \in \mathbb{R}) \quad \innp{p- \frac{x+y}{2}, (x+ \alpha(y-x)) -x}=0 \quad (\text{by}~U=x +\spn\{y-x\})\\
\Longleftrightarrow &  \innp{p- \frac{x+y}{2}, y-x}=0 \\
\Longleftrightarrow &  \innp{p-(x -\frac{x-y}{2}), y-x}=0 \\
\Longleftrightarrow &  \innp{p-x, y-x} + \innp{\frac{x-y}{2},y-x}=0 \\
\Longleftrightarrow & \innp{p-x,y-x} =\frac{1}{2} \norm{y-x}^{2},
\end{align*}
which imply that  \cref{prop:NormEqInnNorm:ProjeEqu} $\Leftrightarrow$  \cref{prop:NormEqInnNorm:NorInnp}.

On the other hand, by \cref{fact:BestAppAffSubspace:i} $\Leftrightarrow$ \cref{fact:BestAppAffSubspace:ii} in \cref{fact:BestAppAffSubspace} and by \cref{fac:AffinePointLinearSpace},
\begin{align*}
\frac{x+y}{2} = P_{U}(p) \Longleftrightarrow & p- \frac{x+y}{2} \in (U-U)^{\perp}\\
\Longleftrightarrow & p \in \frac{x+y}{2} + (U-U)^{\perp},
\end{align*}
which yield that \cref{prop:NormEqInnNorm:ProjeEqu} $\Leftrightarrow$ \cref{prop:NormEqInnNorm:ProjeBelo}.

In conclusion, we obtain \cref{prop:NormEqInnNorm:Norm} $\Leftrightarrow$ \cref{prop:NormEqInnNorm:NorInnp} $\Leftrightarrow$ \cref{prop:NormEqInnNorm:ProjeEqu} $\Leftrightarrow$ \cref{prop:NormEqInnNorm:ProjeBelo}.
\end{proof}

\begin{corollary}\label{cor:LongNormEqInnNorm}
Let $x_{1}, \ldots, x_{m}$ be in $\mathcal{H}$. Let $p \in \mathcal{H}$. Then
\begin{align*}
\norm{p-x_{1}}=\cdots =\norm{p-x_{m-1}}=\norm{p-x_{m}} \Longleftrightarrow \begin{cases}
\innp{p-x_{1},x_{2}-x_{1}} = \frac{1}{2} \norm{x_{2}-x_{1}}^{2} \\
~~~~~~~~~~\vdots \\
\innp{p-x_{1},x_{m-1}-x_{1}} = \frac{1}{2} \norm{x_{m-1}-x_{1}}^{2} \\
\innp{p-x_{1},x_{m}-x_{1}} = \frac{1}{2} \norm{x_{m}-x_{1}}^{2}.
\end{cases}
\end{align*}
\end{corollary}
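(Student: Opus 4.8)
The plan is to reduce the chain of $m-1$ norm equalities to $m-1$ independent two-point equalities, each of which is handled by \cref{prop:NormEqInnNorm}. First I would observe that
\begin{align*}
\norm{p-x_{1}}=\cdots=\norm{p-x_{m}}
\quad\Longleftrightarrow\quad
\big(\forall i \in \{2,\ldots,m\}\big)\quad \norm{p-x_{1}}=\norm{p-x_{i}};
\end{align*}
the forward implication is immediate, and the reverse one follows since equality is transitive with $\norm{p-x_{1}}$ serving as a common value. This step carries no real difficulty.

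Next, for each fixed $i \in \{2,\ldots,m\}$ I would invoke the equivalence \cref{prop:NormEqInnNorm:Norm} $\Leftrightarrow$ \cref{prop:NormEqInnNorm:NorInnp} of \cref{prop:NormEqInnNorm}, applied with the two points $x \coloneqq x_{1}$ and $y \coloneqq x_{i}$. This gives
\begin{align*}
\norm{p-x_{1}}=\norm{p-x_{i}}
\quad\Longleftrightarrow\quad
\innp{p-x_{1},x_{i}-x_{1}} = \tfrac{1}{2}\,\norm{x_{i}-x_{1}}^{2}.
\end{align*}

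Finally I would conjoin these $m-1$ equivalences over $i \in \{2,\ldots,m\}$ and combine with the first observation to obtain exactly the displayed equivalence in the statement, where the right-hand side is the system consisting of all the equations $\innp{p-x_{1},x_{i}-x_{1}} = \tfrac{1}{2}\norm{x_{i}-x_{1}}^{2}$ for $i=2,\ldots,m$. I do not anticipate any genuine obstacle here: the content is entirely in \cref{prop:NormEqInnNorm}, and this corollary is just its iterated form; the only thing to be careful about is that no affine independence of the $x_{i}$ is assumed or needed, so the argument must remain purely at the level of the norm/inner-product identities rather than passing to coordinates.
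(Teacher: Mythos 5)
Your proposal is correct and coincides with the paper's own proof: the paper likewise notes that the chain of equalities is equivalent to the conjunction of the pairwise equalities $\norm{p-x_{1}}=\norm{p-x_{i}}$ for $i\in\{2,\ldots,m\}$ and then applies \cref{prop:NormEqInnNorm} (\cref{prop:NormEqInnNorm:Norm} $\Leftrightarrow$ \cref{prop:NormEqInnNorm:NorInnp}) with $x=x_{1}$, $y=x_{i}$. No differences worth noting.
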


\begin{proof}
Set $I=\{2, \ldots, m-1,m\}$, and let $i \in I$. In \cref{prop:NormEqInnNorm}, substitute $x=x_{1}$ and $y =x_{i}$ and use \cref{prop:NormEqInnNorm:Norm} $\Leftrightarrow$ \cref{prop:NormEqInnNorm:NorInnp}. Then we get $\norm{p-x_{1}} =\norm{p-x_{i}} \Longleftrightarrow \innp{p-x_{1},x_{i}-x_{1}} = \frac{1}{2} \norm{x_{i}-x_{1}}^{2}$. Hence
\begin{equation*}
(\forall i \in I) \quad \norm{p-x_{1}} =\norm{p-x_{i}} \Longleftrightarrow \innp{p-x_{1},x_{i}-x_{1}} = \frac{1}{2} \norm{x_{i}-x_{1}}^{2}.
\end{equation*}
Therefore, the proof is complete.
\end{proof}

The next result plays an essential role in the definition of the circumcenter operator.
\begin{proposition} \label{prop:unique:PExisUnique}
Set $S=\{x_{1}, x_{2}, \ldots, x_{m}\}$, where $m \in \mathbb{N} 
\smallsetminus \{0\}$ and $x_{1}, x_{2}, \ldots, x_{m}$ are in $\mathcal{H}$. 
Then there is at most one point $p \in \mathcal{H}$ satisfying the following two conditions:
\begin{enumerate}
\item \label{prop:unique:PExisUnique:i} $p \in \aff(S)$, and
\item  \label{prop:unique:PExisUnique:ii} $\{ \norm{p-s}~|~s \in S\}$ is a
singleton:
$\norm{p-x_{1}} =\norm{p-x_{2}}=\cdots =\norm{p-x_{m}}$.
\end{enumerate}
\end{proposition}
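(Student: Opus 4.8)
The plan is to prove the ``at most one'' statement directly: assume that both $p$ and $q$ satisfy \cref{prop:unique:PExisUnique:i} and \cref{prop:unique:PExisUnique:ii}, and deduce $p=q$. The key idea is to linearize the equidistance condition \cref{prop:unique:PExisUnique:ii} using \cref{cor:LongNormEqInnNorm}, which replaces it by a system of equations whose left-hand sides are affine in the unknown point and whose right-hand sides depend only on the $x_i$; subtracting the system for $p$ from the system for $q$ then eliminates the right-hand sides.

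Concretely, first I would apply \cref{cor:LongNormEqInnNorm} to the points $x_1,\dots,x_m$ together with $p$, and then again with $q$. This yields, for every $i\in\{2,\dots,m\}$,
\begin{align*}
\innp{p-x_{1},x_{i}-x_{1}}=\tfrac{1}{2}\norm{x_{i}-x_{1}}^{2}=\innp{q-x_{1},x_{i}-x_{1}}.
\end{align*}
Subtracting gives $\innp{p-q,x_{i}-x_{1}}=0$ for all $i\in\{2,\dots,m\}$, i.e., $p-q\perp\spn\{x_{2}-x_{1},\dots,x_{m}-x_{1}\}$.

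Next I would invoke that $p,q\in\aff S$, so $p-q\in (\aff S)-(\aff S)$. By \cref{lem:AffineHull}, $\aff S=x_{1}+\spn\{x_{2}-x_{1},\dots,x_{m}-x_{1}\}$, hence $(\aff S)-(\aff S)=\spn\{x_{2}-x_{1},\dots,x_{m}-x_{1}\}$ and therefore $p-q\in\spn\{x_{2}-x_{1},\dots,x_{m}-x_{1}\}$. Combining the two displayed facts, $p-q$ lies in a linear subspace $L$ and simultaneously in $L^{\perp}$, which forces $p-q=0$, i.e.\ $p=q$. This establishes the proposition.

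I do not expect a real obstacle: the argument is short, and each ingredient is already available in the excerpt (\cref{cor:LongNormEqInnNorm} for the linearization, \cref{lem:AffineHull} for identifying the direction space of $\aff S$). The only minor point to keep in mind is the degenerate case $m=1$, where condition \cref{prop:unique:PExisUnique:ii} is vacuous, $\aff S=\{x_{1}\}$, and the span appearing above is $\{0\}$; uniqueness then holds trivially and the reasoning above still applies verbatim.
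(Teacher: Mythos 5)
Your proposal is correct and follows essentially the same route as the paper: linearize the equidistance condition via \cref{cor:LongNormEqInnNorm}, subtract to get $\innp{p-q,x_i-x_1}=0$, and combine with $p-q\in\spn\{x_2-x_1,\dots,x_m-x_1\}$ (from \cref{lem:AffineHull}) to conclude $\norm{p-q}^2=0$. The paper merely writes the final step as an explicit expansion $0=\innp{p-q,\sum_i\alpha_i(x_{i+1}-x_1)}=\norm{p-q}^2$ rather than phrasing it as $p-q\in L\cap L^{\perp}=\{0\}$, which is the same argument.
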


\begin{proof}
Assume both of $p, q$ satisfy conditions \cref{prop:unique:PExisUnique:i} and \cref{prop:unique:PExisUnique:ii}.

By assumption and \cref{lem:AffineHull}, $p,  q \in  \aff(S)=\aff \{x_{1}, \ldots, x_{m} \} =x_{1} +\spn \{ x_{2}-x_{1}, \ldots, x_{m}-x_{1} \} $. Thus $p-q \in \spn \{ x_{2}-x_{1}, \ldots, x_{m}-x_{1} \} $, and so there exist $ \alpha_{1}, \ldots , \alpha_{m-1} \in \mathbb{R}$ such that $ p-q= \sum^{m-1}_{i=1} \alpha_{i}(x_{i+1}-x_{1})$.
Using the \cref{cor:LongNormEqInnNorm} above and using the
condition \cref{prop:unique:PExisUnique:ii} satisfied by both of $p$ and $q$,
we observe that for every $i \in I=\{2, \ldots,m\}$, we have 
\begin{align*}
\innp{p-x_{1}, x_{i}-x_{1}}&=\frac{1}{2}\norm{x_{i}-x_{1}}^{2} \quad \text{and}\\
\innp{q-x_{1}, x_{i}-x_{1}}&=\frac{1}{2}\norm{x_{i}-x_{1}}^{2}.
\end{align*}
Subtracting the above equalities, we get 
\begin{align*}
(\forall i \in I) \quad \innp{p-q, x_{i}-x_{1}}=0.
\end{align*}
Multiplying $\alpha_{i}$ on both sides of the corresponding $i^{\text{th}}$ 
equality and then summing up the $m-1$ equalities, we get
\begin{align*}
0= \Innp{p-q,\sum^{m-1}_{i=1} \alpha_{i}(x_{i+1}-x_{1})} =  \innp{p-q,p-q}=\norm{p-q}^{2}.
\end{align*}
Hence $p=q$, which implies that if such point satisfying conditions \cref{prop:unique:PExisUnique:i} and \cref{prop:unique:PExisUnique:ii} exists, then it must be unique.
\end{proof}

We are now in a position to define the circumcenter operator.

\begin{definition}[circumcenter] \label{defn:Circumcenter}
The \emph{circumcenter operator} is 
\begin{empheq}[box=\mybluebox]{equation*}
\CCO{} \colon \mathcal{P}(\mathcal{H}) \to \mathcal{H} \cup \{ \varnothing \} \colon S \mapsto \begin{cases} p, \quad ~\text{if}~p \in \aff (S)~\text{and}~\{\norm{p-s} ~|~s \in S \}~\text{is a singleton};\\
\varnothing, \quad~ \text{otherwise}.
\end{cases}
\end{empheq}
The \emph{circumradius operator} is 
\begin{align*}
\CRO{} \colon \mathcal{P}(\mathcal{H}) \to \mathbb{R}\colon
S \mapsto 
\begin{cases} \norm{\CCO(S) -s }, &\text{if $\CCO(S) \in
\mathcal{H}$ and $s\in S$};\\
+\infty, &\text{if $\CCO(S)=\varnothing$.}
\end{cases}
\end{align*}
In particular, when $\CCO(S) \in \mathcal{H}$, that is, $\CCO(S) \neq \varnothing$, we say that the circumcenter of $S$ exists and we call $\CCO(S)$ the circumcenter of $S$ and $\CRO(S)$ the circumradius of $S$.
\end{definition}

Note that in the \cref{prop:unique:PExisUnique} above, we have already proved that for every $S \in \mathcal{P}(\mathcal{H})$, there is at most one point $p \in \aff (S) $ such that $\{\norm{p-s} ~|~s \in S \}$ is a singleton, so the notions are \emph{well defined}. 
Hence we obtain the following alternative expression of the
circumcenter operator: 

\begin{remark} \label{rem:Circumcenter}
Let $S \in \mathcal{P}(\mathcal{H})$. Then the $\CCO(S)$ is either $\varnothing$ or the \emph{unique} point $p \in \mathcal{H}$ such that 
\begin{enumerate}
\item $p \in \aff (S)$ and, 
\item $\{\norm{p-s}~|~s \in S \}$ is a singleton.
\end{enumerate}
\end{remark}

\begin{example} \label{exam:CircForTwoPoints}
Let $x_1,x_2$ be in $\mathcal{H}$. 
Then 
\begin{align*}
\CCO{\big(\{x_1,x_2\}\big)}=\frac{x_{1} + x_{2}}{2}.
\end{align*} 
\end{example}
 
 \section{Explicit formulae for the circumcenter} 
\label{sec:ClosFormuCircOper}

We continue to assume that 
\begin{empheq}[box=\mybluebox]{equation*}
m \in \mathbb{N} \smallsetminus \{0\},
\quad x_1,\ldots,x_m \text{~are vectors in $\mathcal{H}$},\quad 
\text{and} \quad S = \{x_{1}, \ldots, x_{m}\}. 
\end{empheq}
If $S$ is a singleton, say $S=\{x_{1}\}$, 
then, by \cref{defn:Circumcenter}, we clearly have $\CCO(S)=x_{1}$. 
So in this section, to deduce the formula of $\CCO(S)$, we always 
assume that 
\begin{empheq}[box=\mybluebox]{equation*}
m \geq 2.
\end{empheq}

We are ready for an explicit formula for the circumcenter.

\begin{theorem} \label{thm:unique:LinIndpPformula}
Suppose that $x_{1}, \ldots, x_{m}$ are affinely independent.
Then $\CCO(S) \in \mathcal{H}$, which means that $\CCO(S)$ 
is the unique point satisfying the following two conditions:
\begin{enumerate}
\item \label{prop:unique:i} $\CCO(S) \in \aff (S)$, and
\item  \label{prop:unique:ii} $\{ \norm{\CCO(S)-s}~|~s \in S \}$ is a singleton.
\end{enumerate}
Moreover,
\begin{align*}
\CCO(S)= x_{1}+\frac{1}{2}(x_{2}-x_{1},\ldots,x_{m}-x_{1})
 G( x_{2}-x_{1},\ldots,x_{m}-x_{1})^{-1}
\begin{pmatrix}
 \norm{x_{2}-x_{1}}^{2} \\
 \vdots\\
\norm{x_{m}-x_{1}}^{2} \\
\end{pmatrix},
\end{align*}
where $G( x_{2}-x_{1},\ldots,x_{m-1}-x_{1},x_{m}-x_{1})$ is the
Gram matrix defined in \cref{defn:GramMatrix}:
\begin{align*}
&G( x_{2}-x_{1},\ldots, x_{m-1}-x_{1},x_{m}-x_{1})\\
=&
\begin{pmatrix} 
\norm{x_{2}-x_{1}}^{2} &\innp{x_{2}-x_{1},x_{3}-x_{1}} & \cdots & \innp{x_{2}-x_{1}, x_{m}-x_{1}}  \\ 
\vdots & \vdots & ~~& \vdots \\
\innp{x_{m-1}-x_{1},x_{2}-x_{1}} & \innp{x_{m-1}-x_{1}, x_{3}-x_{1}} & \cdots & \innp{x_{m-1}-x_{1},x_{m}-x_{1}} \\
\innp{x_{m}-x_{1},x_{2}-x_{1}} & \innp{x_{m}-x_{1},x_{3}-x_{1}} & \cdots & \norm{x_{m}-x_{1}}^{2} \\
\end{pmatrix}.
\end{align*}
\end{theorem}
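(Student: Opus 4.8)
The plan is to verify that the point $p$ given by the displayed formula satisfies the two defining conditions of the circumcenter. Since \cref{prop:unique:PExisUnique} already guarantees that at most one point of $\aff(S)$ is equidistant from all of $x_1,\ldots,x_m$, exhibiting such a $p$ simultaneously proves $\CCO(S)\in\mathcal{H}$ and identifies $\CCO(S)$ with $p$. I would first fix a notational convention: for $\lambda=(\lambda_1,\ldots,\lambda_{m-1})^{\intercal}\in\mathbb{R}^{m-1}$, the symbol $(x_2-x_1,\ldots,x_m-x_1)\lambda$ denotes the vector $\sum_{i=1}^{m-1}\lambda_i(x_{i+1}-x_1)\in\mathcal{H}$, so that the right-hand side of the formula is a genuine element of $\mathcal{H}$.

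To see that $p$ is well defined, note that affine independence of $x_1,\ldots,x_m$ gives, via \cref{fac:AffinIndeLineInd}, that $x_2-x_1,\ldots,x_m-x_1$ are linearly independent, whence by \cref{fact:Gram:inver} the Gram matrix $G(x_2-x_1,\ldots,x_m-x_1)$ is invertible. Condition \cref{prop:unique:i} is then immediate from \cref{lem:AffineHull}: writing $b=\big(\norm{x_2-x_1}^2,\ldots,\norm{x_m-x_1}^2\big)^{\intercal}$ and $\lambda=\tfrac12 G(x_2-x_1,\ldots,x_m-x_1)^{-1}b$, we have $p=x_1+(x_2-x_1,\ldots,x_m-x_1)\lambda\in x_1+\spn\{x_2-x_1,\ldots,x_m-x_1\}=\aff(S)$.

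For condition \cref{prop:unique:ii} I would invoke \cref{cor:LongNormEqInnNorm}, which reduces the claim $\norm{p-x_1}=\cdots=\norm{p-x_m}$ to the system $\innp{p-x_1,x_i-x_1}=\tfrac12\norm{x_i-x_1}^2$ for $i\in\{2,\ldots,m\}$. The only computation required is the elementary identity that, for any $a_1,\ldots,a_{m-1}\in\mathcal{H}$, any $\mu\in\mathbb{R}^{m-1}$, and any $k\in\{1,\ldots,m-1\}$,
\begin{align*}
\Innp{\sum_{j=1}^{m-1}\mu_j a_j,\,a_k}=\big(G(a_1,\ldots,a_{m-1})\mu\big)_k,
\end{align*}
which follows at once from bilinearity of $\innp{\cdot,\cdot}$ and the symmetry of the Gram matrix. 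Applying this with $a_j=x_{j+1}-x_1$ and $\mu=\lambda$, and using $p-x_1=(x_2-x_1,\ldots,x_m-x_1)\lambda$, we get for each $k\in\{1,\ldots,m-1\}$ that $\innp{p-x_1,x_{k+1}-x_1}=\big(G(x_2-x_1,\ldots,x_m-x_1)\lambda\big)_k=\big(\tfrac12 b\big)_k=\tfrac12\norm{x_{k+1}-x_1}^2$, which is precisely the required system. Hence, by \cref{cor:LongNormEqInnNorm}, $\{\norm{p-s}\mid s\in S\}$ is a singleton.

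Since $p$ satisfies \cref{prop:unique:i} and \cref{prop:unique:ii}, \cref{defn:Circumcenter} together with the uniqueness in \cref{prop:unique:PExisUnique} yields $\CCO(S)=p\in\mathcal{H}$, as claimed. I expect the only delicate point to be purely notational: the ``matrix'' $(x_2-x_1,\ldots,x_m-x_1)$ is a tuple of Hilbert-space vectors, not a numerical matrix, so one must be careful in stating and using the one-line Gram-matrix identity above; once the convention is pinned down, the argument is short and rests entirely on \cref{lem:AffineHull}, \cref{cor:LongNormEqInnNorm}, \cref{fact:Gram:inver}, and \cref{prop:unique:PExisUnique}.
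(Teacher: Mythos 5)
Your proposal is correct and follows essentially the same route as the paper's own proof: both define the candidate point via $\lambda=\tfrac12 G^{-1}b$, verify membership in $\aff(S)$ via \cref{lem:AffineHull}, reduce the equidistance condition to the linear system $\innp{p-x_{1},x_{i}-x_{1}}=\tfrac12\norm{x_{i}-x_{1}}^{2}$ via \cref{cor:LongNormEqInnNorm}, and conclude uniqueness from \cref{prop:unique:PExisUnique}. Your explicit remark about the Gram-matrix identity and the interpretation of the tuple $(x_{2}-x_{1},\ldots,x_{m}-x_{1})$ as acting on coefficient vectors is a clean way of stating the computation the paper carries out as a chain of equivalences.
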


\begin{proof}
By assumption and \cref{fac:AffinIndeLineInd}, we get that
$x_{2}-x_{1}, \ldots, x_{m}-x_{1} $ are linearly independent.
Then by \cref{fact:Gram:inver}, the Gram matrix $G(x_{2}-x_{1},
x_{3}-x_{1}, \ldots, x_{m}-x_{1})$ is invertible. Set
\begin{align*}
\begin{pmatrix}
\alpha_{1} \\
\alpha_{2}\\
\vdots\\
\alpha_{m-1} \\
\end{pmatrix}
= \frac{1}{2}G(x_{2}-x_{1}, x_{3}-x_{1}, \ldots, x_{m}-x_{1})^{-1}
\begin{pmatrix}
 \norm{x_{2}-x_{1}}^{2} \\
  \norm{x_{3}-x_{1}}^{2} \\
\vdots\\
 \norm{x_{m}-x_{1}}^{2} \\
\end{pmatrix},
\end{align*}
and 
\begin{align*}
p = x_{1}+\alpha_{1}(x_{2}-x_{1})+\alpha_{2}(x_{3}-x_{1})+\cdots+\alpha_{m-1}(x_{m}-x_{1}).
\end{align*}
By the definition of $G(x_{2}-x_{1}, x_{3}-x_{1}, \ldots,
x_{m}-x_{1})$ and by the definitions of $\begin{pmatrix}
\alpha_{1} &
\alpha_{2}&
\cdots&
\alpha_{m-1} 
\end{pmatrix}^{\intercal}$ and $p$,  we obtain
the equivalences
\begin{align*}
&G(x_{2}-x_{1}, x_{3}-x_{1}, \ldots, x_{m}-x_{1})
\begin{pmatrix}
\alpha_{1} \\
\alpha_{2}\\
\vdots\\
\alpha_{m-1} \\
\end{pmatrix}
= \frac{1}{2} 
\begin{pmatrix}
\norm{x_{2}-x_{1}}^{2} \\
 \norm{x_{3}-x_{1}}^{2} \\
\vdots\\
 \norm{x_{m}-x_{1}}^{2} \\
\end{pmatrix} \\
\Longleftrightarrow &
\begin{cases}
\innp{ \alpha_{1}(x_{2}-x_{1})+ \cdots +\alpha_{m-1}(x_{m}-x_{1}), x_{2}-x_{1} } = \frac{1}{2} \norm{x_{2}-x_{1}}^{2} \\
~~~~~~~~~~\vdots \\
\innp{\alpha_{1}(x_{2}-x_{1})+ \cdots +\alpha_{m-1}(x_{m}-x_{1}), x_{m}-x_{1} } = \frac{1}{2} \norm{x_{m}-x_{1}}^{2}
\end{cases}\\
\Longleftrightarrow &
\begin{cases}
\innp{p-x_{1},x_{2}-x_{1}} = \frac{1}{2} \norm{x_{2}-x_{1}}^{2} \\
~~~~~~~~~~\vdots \\
\innp{p-x_{1},x_{m}-x_{1}} = \frac{1}{2} \norm{x_{m}-x_{1}}^{2}.
\end{cases}
\end{align*}
Hence by \cref{cor:LongNormEqInnNorm}, we know that $p$ satisfy condition \cref{prop:unique:ii}. In addition, it is clear that $p=x_{1}+\alpha_{1}(x_{2}-x_{1})+\alpha_{2}(x_{3}-x_{1})+\cdots+\alpha_{m-1}(x_{m}-x_{1}) \in x_{1}+ \spn \{x_{2} -x_{1}, \ldots, x_{m}-x_{1} \}= \aff(S)$, which is just the condition \cref{prop:unique:i}. Hence the point satisfying conditions  \cref{thm:unique:i} and \cref{thm:unique:ii} exists.

Moreover, by \cref{prop:unique:PExisUnique}, if the point exists, then it must be unique. 
\end{proof}

\begin{lemma}\label{lem:unique:BasisPformula}
Suppose that $\CCO(S) \in \mathcal{H}$, 
and let $K \subseteq S$ such that $\aff(K)=\aff(S)$. 
Then
\begin{align*}
\CCO(S)=\CCO(K).
\end{align*}
\end{lemma}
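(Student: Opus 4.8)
The plan is to verify that $\CCO(S)$ itself satisfies the two defining conditions of $\CCO(K)$ from \cref{rem:Circumcenter}, and then invoke the uniqueness already established in \cref{prop:unique:PExisUnique}. Write $p := \CCO(S)$, which by hypothesis lies in $\mathcal{H}$. By \cref{defn:Circumcenter} we know $p \in \aff(S)$ and $\{\norm{p - s} \mid s \in S\}$ is a singleton, say $\norm{p - s} = r$ for all $s \in S$. Since $K \subseteq S$, it is immediate that $\{\norm{p - s} \mid s \in K\}$ is also a singleton (equal to $\{r\}$), so $p$ satisfies condition (ii) relative to $K$. For condition (i), the hypothesis $\aff(K) = \aff(S)$ gives $p \in \aff(S) = \aff(K)$, so $p \in \aff(K)$ as well. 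Thus $p$ is a point satisfying conditions (i) and (ii) of \cref{rem:Circumcenter} for the set $K$, which forces $\CCO(K) \in \mathcal{H}$ and, by the uniqueness in \cref{prop:unique:PExisUnique} applied to $K$, $\CCO(K) = p = \CCO(S)$.

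There is essentially no obstacle here: the lemma is a direct consequence of the fact that the two defining conditions for the circumcenter depend on the point set only through its affine hull (condition (i)) and through the collection of distances to the points, which can only shrink when passing to a subset (condition (ii)). The one thing worth stating explicitly is that $\CCO(K)$ is a priori allowed to be $\varnothing$, so one must argue that $\CCO(K)$ actually exists before equating it with $\CCO(S)$ — but this is exactly what the displayed verification accomplishes, since exhibiting a point satisfying (i) and (ii) rules out the $\varnothing$ case in \cref{defn:Circumcenter}. No appeal to affine independence or to the explicit Gram-matrix formula of \cref{thm:unique:LinIndpPformula} is needed; the argument is purely at the level of the abstract characterization.

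For the write-up I would keep it to a short paragraph: set $p = \CCO(S)$, record the two properties it enjoys with respect to $S$, observe both survive the restriction to $K$ together with the hypothesis $\aff(K) = \aff(S)$, and conclude via \cref{prop:unique:PExisUnique} (or equivalently \cref{rem:Circumcenter}) that $\CCO(K)$ exists and equals $p$.
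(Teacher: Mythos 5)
Your proposal is correct and follows essentially the same route as the paper's own proof: verify that $\CCO(S)$ satisfies the two defining conditions relative to $K$ (using $K\subseteq S$ for the distance condition and $\aff(K)=\aff(S)$ for the affine-hull condition), then conclude via the uniqueness in \cref{prop:unique:PExisUnique}. Your explicit remark that this also rules out $\CCO(K)=\varnothing$ is a welcome clarification but not a different argument.
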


\begin{proof}  
By assumption, $\CCO(S) \in \mathcal{H}$, that is: 
\begin{enumerate}
\item \label{thm:unique:i} $\CCO(S) \in \aff (S)$, and
\item  \label{thm:unique:ii} $\{ \norm{\CCO(S)-s}~|~s \in S \}$ is a singleton.
\end{enumerate}

Because $K \subseteq S$, we get $\{ \norm{\CCO(S)-s}~|~s \in K \}$ is a singleton, by \cref{thm:unique:ii}. Since $\aff(K)=\aff(S)$, by \cref{thm:unique:i}, the point $\CCO(S)$ satisfy
\begin{enumerate}[label=(\Roman*)]
\item $\CCO(S) \in \aff (K)$, and
\item $\{ \norm{\CCO(S) - u}~|~ u \in K\}$ is a singleton.
\end{enumerate}
Replacing $S$ in \cref{prop:unique:PExisUnique} by $K$ and combining with \cref{defn:Circumcenter}, we know $\CCO(K) =\CCO(S)$.  
 \end{proof}

\begin{corollary} \label{cor:unique:BasisPformula}
Suppose that $\CCO(S) \in \mathcal{H}$. Let $x_{i_{1}}, \ldots,
x_{i_{t}}$ be elements of $S$ such that $x_{1}, x_{i_{1}},
\ldots, x_{i_{t}}$ are affinely independent, and 
set $K=\{x_{1}, x_{i_{1}}, \ldots, x_{i_{t}} \}$. 
Furthermore, assume that $ \aff (K) =\aff(S)$. 
Then 
\begin{align*}
\CCO(S)=\CCO(K) = x_{1}+\frac{1}{2}(x_{i_{1}}-x_{1},\ldots,x_{i_{t}}-x_{1})
 G( x_{i_{1}}-x_{1},\ldots,x_{i_{t}}-x_{1})^{-1}
\begin{pmatrix}
 \norm{x_{i_{1}}-x_{1}}^{2} \\
 \vdots\\
 \norm{x_{i_{t}}-x_{1}}^{2}\\ 
\end{pmatrix}.
\end{align*}
\end{corollary}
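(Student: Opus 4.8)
The plan is simply to chain together the two results that immediately precede the statement. First I would observe that the hypotheses of \cref{lem:unique:BasisPformula} are met: by construction $K=\{x_{1},x_{i_{1}},\ldots,x_{i_{t}}\}\subseteq S$ (each listed vector is an element of $S$), and we are assuming both $\CCO(S)\in\mathcal{H}$ and $\aff(K)=\aff(S)$. Hence \cref{lem:unique:BasisPformula} applies verbatim and yields
\begin{align*}
\CCO(S)=\CCO(K).
\end{align*}

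Second, I would apply \cref{thm:unique:LinIndpPformula} to the set $K$ in place of $S$, with the tuple $(x_{1},\ldots,x_{m})$ of that theorem played by the tuple $(x_{1},x_{i_{1}},\ldots,x_{i_{t}})$ (so the index "$m$" there becomes "$t+1$" here, and "$x_{1}$" remains the designated base point). This relabelling is legitimate precisely because $x_{1},x_{i_{1}},\ldots,x_{i_{t}}$ are affinely independent, which is part of the hypothesis. \cref{thm:unique:LinIndpPformula} then gives $\CCO(K)\in\mathcal{H}$ together with
\begin{align*}
\CCO(K)= x_{1}+\frac{1}{2}(x_{i_{1}}-x_{1},\ldots,x_{i_{t}}-x_{1})\,
 G( x_{i_{1}}-x_{1},\ldots,x_{i_{t}}-x_{1})^{-1}
\begin{pmatrix}
 \norm{x_{i_{1}}-x_{1}}^{2} \\
 \vdots\\
 \norm{x_{i_{t}}-x_{1}}^{2}\\
\end{pmatrix},
\end{align*}
where the relevant Gram matrix $G(x_{i_{1}}-x_{1},\ldots,x_{i_{t}}-x_{1})$ is invertible by \cref{fac:AffinIndeLineInd} and \cref{fact:Gram:inver}. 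Combining the two displayed equalities proves the corollary.

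There is essentially no real obstacle here; the statement is a bookkeeping consequence of \cref{lem:unique:BasisPformula} and \cref{thm:unique:LinIndpPformula}. The only points that warrant a sentence of care are: (i) verifying $K\subseteq S$ so that \cref{lem:unique:BasisPformula} genuinely applies; (ii) making the index relabelling in \cref{thm:unique:LinIndpPformula} explicit, since that theorem is phrased for a set written as $\{x_{1},\ldots,x_{m}\}$ whereas here the chosen affinely independent subfamily is $\{x_{1},x_{i_{1}},\ldots,x_{i_{t}}\}$; and (iii) the degenerate case $t=0$, in which $\aff(S)=\aff(K)=\{x_{1}\}$ is a single point and the formula collapses to $\CCO(S)=\CCO(K)=x_{1}$ (the Gram-matrix factor being vacuous), consistent with \cref{defn:Circumcenter}.
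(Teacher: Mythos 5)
Your proposal is correct and follows essentially the same route as the paper: apply \cref{thm:unique:LinIndpPformula} to the affinely independent family $x_{1},x_{i_{1}},\ldots,x_{i_{t}}$ to obtain the explicit formula for $\CCO(K)$, and combine this with \cref{lem:unique:BasisPformula} to identify $\CCO(S)$ with $\CCO(K)$. The extra remarks on the index relabelling and the degenerate case $t=0$ are fine but not needed.
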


\begin{proof}
By \cref{thm:unique:LinIndpPformula}, $x_{1}, x_{i_{1}}, \ldots, x_{i_{t}}$ are affinely independent implies that $\CCO(K) \neq \varnothing$, and 
\begin{align*}
\CCO(K)= x_{1}+\frac{1}{2}(x_{i_{1}}-x_{1},\ldots,x_{i_{t}}-x_{1})
 G( x_{i_{1}}-x_{1},\ldots,x_{i_{t}}-x_{1})^{-1}
\begin{pmatrix}
 \norm{x_{i_{1}}-x_{1}}^{2} \\
 \vdots\\
 \norm{x_{i_{t}}-x_{1}}^{2}\\ 
\end{pmatrix}.
\end{align*}
Then the desired result follows from \cref{lem:unique:BasisPformula}.
\end{proof}

\begin{lemma}\label{lem:Basis:AffineHullEq} 
Let $x_{i_{1}}, \ldots, x_{i_{t}}$ be elements of $S$, 
and set $K =\{x_{1}, x_{i_{1}}, \ldots, x_{i_{t}} \}$. Then 
\begin{align*}
& \aff(K) =\aff(S)~\mbox{ and}~ x_{1}, x_{i_{1}}, \ldots, x_{i_{t}}~\mbox{are affinely independent}.\\
\Longleftrightarrow ~& x_{i_{1}}-x_{1}, \ldots, x_{i_{t}}-x_{1} ~\mbox{is a basis of}~ \spn\{x_{2}-x_{1}, \ldots, x_{m}-x_{1} \}
\end{align*}
\end{lemma}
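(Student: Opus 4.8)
The plan is to subtract $x_1$ from everything so that both affine hulls turn into linear subspaces, and then to recognize the two bulleted conditions on the right-hand side as precisely ``spanning'' and ``linearly independent'', which together mean ``basis''.

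First I would invoke \cref{lem:AffineHull} to write $\aff(S) = x_1 + \spn\{x_2 - x_1, \ldots, x_m - x_1\}$ and $\aff(K) = x_1 + \spn\{x_{i_1} - x_1, \ldots, x_{i_t} - x_1\}$. Put $L := \spn\{x_2 - x_1, \ldots, x_m - x_1\}$ and $L_K := \spn\{x_{i_1} - x_1, \ldots, x_{i_t} - x_1\}$. Translating both affine hulls by $-x_1$ shows $\aff(K) = \aff(S) \Longleftrightarrow L_K = L$. Now comes the one observation that carries the content: since $K \subseteq S = \{x_1, \ldots, x_m\}$, each generator $x_{i_j} - x_1$ of $L_K$ is either $0$ or one of the vectors $x_k - x_1$, hence lies in $L$; therefore $L_K \subseteq L$ always, and $L_K = L \Longleftrightarrow L \subseteq L_K \Longleftrightarrow x_{i_1} - x_1, \ldots, x_{i_t} - x_1$ span $L$.

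Next I would apply \cref{fac:AffinIndeLineInd} to get that $x_1, x_{i_1}, \ldots, x_{i_t}$ are affinely independent if and only if $x_{i_1} - x_1, \ldots, x_{i_t} - x_1$ are linearly independent.

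Finally I would combine the two equivalences: the conjunction on the left-hand side of the lemma holds if and only if $x_{i_1} - x_1, \ldots, x_{i_t} - x_1$ both span $L$ and are linearly independent, i.e.\ if and only if they form a basis of $L = \spn\{x_2 - x_1, \ldots, x_m - x_1\}$, which is exactly the right-hand side. The only step requiring a moment's care is the inclusion $L_K \subseteq L$ coming from $K \subseteq S$ (so that equality of affine hulls collapses to a single spanning condition); beyond that the argument is a direct appeal to \cref{lem:AffineHull} and \cref{fac:AffinIndeLineInd}, and I anticipate no real obstacle.
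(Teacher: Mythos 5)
Your proposal is correct and follows essentially the same route as the paper: both arguments unfold ``basis'' into ``linearly independent and spanning'', convert linear independence of the differences to affine independence of the points via \cref{fac:AffinIndeLineInd}, and translate the span equality by $x_1$ to recover the equality of affine hulls via \cref{lem:AffineHull}. The extra observation that $L_K \subseteq L$ automatically is harmless but not needed, since ``$\spn\{x_{i_1}-x_1,\ldots,x_{i_t}-x_1\} = \spn\{x_2-x_1,\ldots,x_m-x_1\}$'' is already exactly the spanning condition the paper writes down.
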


\begin{proof}
Indeed, 
\begin{align*}
&~~ x_{i_{1}}-x_{1}, \ldots, x_{i_{t}}-x_{1}~\mbox{is a basis of}~ \spn\{x_{2}-x_{1}, \ldots, x_{m}-x_{1} \}\\
\Longleftrightarrow & \begin{cases} x_{i_{1}}-x_{1}, \ldots, x_{i_{t}}-x_{1} ~\mbox{are linearly independent, and}\\
\spn \{x_{i_{1}}-x_{1}, \ldots, x_{i_{t}}-x_{1} \} = \spn\{x_{2}-x_{1}, \ldots, x_{m}-x_{1} \}
\end{cases}\\
\stackrel{\text{\cref{fac:AffinIndeLineInd}}}{\Longleftrightarrow} & \begin{cases}x_{1}, x_{i_{1}}, \ldots, x_{i_{t}}~\mbox{are affinely independent, and}\\
x_{1}+\spn \{x_{i_{1}}-x_{1}, \ldots, x_{i_{t}}-x_{1} \} = x_{1}+\spn\{x_{2}-x_{1}, \ldots, x_{m}-x_{1} \}
\end{cases}\\
\Longleftrightarrow & \begin{cases}x_{1}, x_{i_{1}}, \ldots, x_{i_{t}}~\mbox{are affinely independent, and}\\
 \aff(K)= \aff(S),
\end{cases}
\end{align*}
which completes the proof. 
\end{proof}

\section{Additional formulae for the circumcenter} 
\label{sec:SymmFormuCCS}

Upholding the assumptions of \cref{sec:ClosFormuCircOper},
we assume additionally that 
\begin{empheq}[box=\mybluebox]{equation*} 
x_{1}, \ldots, x_{m}  ~\text{are affinely independent.}
\end{empheq}
By \cref{thm:unique:LinIndpPformula}, $\CCO(S) \in \mathcal{H}$. Let 
\begin{empheq}[box=\mybluebox]{equation*}
k \in \{2, 3, \ldots, m\}~\text{be arbitrary but fixed}.
\end{empheq}
By \cref{thm:unique:LinIndpPformula} again, we know that 
\begin{subequations} \label{eq:FormuCCSAlpha}
\begin{align} 
\CCO(S) &~=  x_{1}+\alpha_{1}(x_{2}-x_{1})+\alpha_{2}(x_{3}-x_{1})+\cdots+\alpha_{m-1}(x_{m}-x_{1}) \label{eq:FormuSymmCCS:1:F}\\
&~= \big(1 - {\textstyle \sum^{m-1}_{i=1}} \alpha_{i}\big) x_{1} + \alpha_{1} x_{2} +\cdots+\alpha_{m-1} x_{m}, \label{eq:FormuSymmCCS:1}
\end{align}
\end{subequations}
where 
\begin{align} \label{eq:FormCCS:Param:Alpha}
\begin{pmatrix}
\alpha_{1} \\
\alpha_{2}\\
\vdots\\
\alpha_{m-1} \\
\end{pmatrix}
= \frac{1}{2}G(x_{2}-x_{1}, x_{3}-x_{1}, \ldots, x_{m}-x_{1})^{-1}
\begin{pmatrix}
 \norm{x_{2}-x_{1}}^{2} \\
  \norm{x_{3}-x_{1}}^{2} \\
\vdots\\
 \norm{x_{m}-x_{1}}^{2} \\
\end{pmatrix}.
\end{align}
By the symmetry of the positions of the points $x_{1}, \ldots, x_{k}, \ldots,x_{m}$ in $S$ in \cref{defn:Circumcenter} and by \cref{prop:unique:PExisUnique}, we also get 
\begin{subequations}\label{eq:FormuCCSBeta}
\begin{align} 
\CCO(S) &~=  x_{k} +\beta_{1}(x_{1}-x_{k})+\cdots+\beta_{k-1}(x_{k-1}-x_{k})+\beta_{k}(x_{k+1}-x_{k})\cdots+\beta_{m-1}(x_{m}-x_{k}) \label{eq:FormuSymmCCS:k:F}\\ 
&~= \beta_{1} x_{1} +\cdots+\beta_{k-1}x_{k-1}+(1 - \sum^{m-1}_{i=1} \beta_{i}) x_{k} + \beta_{k}x_{k+1}+\cdots+\beta_{m-1} x_{m} , \label{eq:ForSyCCS:k}
\end{align}
\end{subequations}
where 
\begin{align} \label{eq:FormCCS:Param:Beta}
\begin{pmatrix}
\beta_{1} \\
\beta_{2}\\
\vdots\\
\beta_{m-1} \\
\end{pmatrix}
= \frac{1}{2}G(x_{1}-x_{k},\ldots,x_{k-1}-x_{k}, x_{k+1}-x_{k}, \ldots,x_{m}-x_{k})^{-1}
\begin{pmatrix}
 \norm{x_{1}-x_{k}}^{2} \\
 \vdots\\
 \norm{x_{k-1}-x_{k}}^{2}\\
 \norm{x_{k+1}-x_{k}}^{2}\\
 \vdots\\
\norm{x_{m}-x_{k}}^{2} \\
\end{pmatrix}.
\end{align}

\begin{proposition}
The following equalities hold:
\begin{align}
& \big(1 - \textstyle \sum^{m-1}_{i=1} \alpha_{i}\big) = \beta_{1},
~~~(\text{coefficient of $x_{1}$})  \label{eq:SFCCS} \\
& \alpha_{k-1}=\big(1 - \textstyle \sum^{m-1}_{i=1} \beta_{i}\big),
~~~(\text{coefficient of $x_{k}$})  \label{eq:SymForCCS:k} \\
&  (\forall i \in \{ 2, \ldots, k-1\}) \quad \alpha_{i-1} = \beta_{i} \quad \text{and} \quad (\forall j \in \{k, k+1, \ldots, m-1\}) \quad \alpha_{j} =\beta_{j}. \label{eq:SymForCCS:Remaining}
\end{align}
\end{proposition}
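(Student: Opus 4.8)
The plan is to exploit the uniqueness of barycentric coordinates with respect to an affinely independent family. By the standing assumption of this section, $x_{1}, \ldots, x_{m}$ are affinely independent, so \cref{lem:UniqExpreAffIndp} guarantees that every point of $\aff(S)$ has exactly one representation as an affine combination $\lambda_{1} x_{1} + \cdots + \lambda_{m} x_{m}$ with $\sum_{i=1}^{m} \lambda_{i} = 1$. Both \cref{eq:FormuSymmCCS:1} and \cref{eq:ForSyCCS:k} exhibit $\CCO(S)$ in precisely this form --- in each display the coefficients visibly sum to $1$ --- so the two coefficient tuples must coincide entry by entry. The proposition is nothing but the list of these identifications, once one keeps careful track of the indices.

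Concretely, I would first read off from \cref{eq:FormuSymmCCS:1} that the coefficient of $x_{1}$ in the ``$\alpha$-representation'' is $1 - \sum_{i=1}^{m-1}\alpha_{i}$ and that the coefficient of $x_{j}$ is $\alpha_{j-1}$ for each $j \in \{2, \ldots, m\}$; in particular the coefficient of $x_{k}$ is $\alpha_{k-1}$. Next I would read off from \cref{eq:ForSyCCS:k} that in the ``$\beta$-representation'' the coefficient of $x_{k}$ is $1 - \sum_{i=1}^{m-1}\beta_{i}$, the coefficient of $x_{i}$ is $\beta_{i}$ for $i \in \{1, \ldots, k-1\}$, and the coefficient of $x_{j}$ is $\beta_{j-1}$ for $j \in \{k+1, \ldots, m\}$. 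Equating the two expressions for the coefficient of $x_{1}$ gives \cref{eq:SFCCS}; equating the two expressions for the coefficient of $x_{k}$ gives \cref{eq:SymForCCS:k}; equating coefficients of $x_{i}$ for $2 \le i \le k-1$ gives $\alpha_{i-1} = \beta_{i}$; and equating coefficients of $x_{j}$ for $k+1 \le j \le m$ gives $\alpha_{j-1} = \beta_{j-1}$, which upon the shift $j \mapsto j+1$ reads $\alpha_{j} = \beta_{j}$ for $j \in \{k, \ldots, m-1\}$. Together the last two families are exactly \cref{eq:SymForCCS:Remaining}.

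The single place where care is required, and the only plausible source of an off-by-one error, is the re-indexing in \cref{eq:ForSyCCS:k}: the basis $x_{1}-x_{k}, \ldots, x_{k-1}-x_{k}, x_{k+1}-x_{k}, \ldots, x_{m}-x_{k}$ used in \cref{eq:FormuSymmCCS:k:F} omits the index $k$, so $x_{k+1}$ is attached to the coefficient $\beta_{k}$ rather than $\beta_{k+1}$, and this shift propagates through all indices larger than $k$. Once that bookkeeping is handled, the argument is immediate; notably, the explicit Gram-matrix formulae \cref{eq:FormCCS:Param:Alpha} and \cref{eq:FormCCS:Param:Beta} for the coefficients play no role --- only the fact that \cref{eq:FormuSymmCCS:1} and \cref{eq:ForSyCCS:k} hold, established just above the proposition, is used.
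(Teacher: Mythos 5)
Your proof is correct and takes exactly the paper's route: the paper likewise invokes the uniqueness of affine (barycentric) representations from \cref{lem:UniqExpreAffIndp} to equate the coefficients of $\CCO(S)$ in \cref{eq:FormuSymmCCS:1} and \cref{eq:ForSyCCS:k}. Your explicit index bookkeeping merely spells out what the paper's one-line proof leaves implicit, and it is carried out correctly.
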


\begin{proof}
Recall that at the beginning of this section we assumed $x_{1}, \ldots, x_{m}$ are affinely independent. Combining the equations \cref{eq:FormuSymmCCS:1} $\&$\cref{eq:ForSyCCS:k} and \cref{lem:UniqExpreAffIndp}, we get the required results.
\end{proof}

To simplify the statements, we use the following abbreviations. 
\begin{align*}
A = ~&G(x_{2}-x_{1}, \ldots, x_{k}-x_{1}, \ldots, x_{m}-x_{1}),\\
B = ~& G(x_{1}-x_{k}, \ldots, x_{k-1}-x_{k}, x_{k+1}-x_{k}, \ldots, x_{m}-x_{k}),
\end{align*}
and the determinant of matrix $A$ (by \cref{prop:GramMatrSymm},
it is also the determinant of matrix $B$) is denoted by:
\begin{align*}
\delta = \det(A).
\end{align*}
We denote the two column vectors $a$, $b$ respectively by:
\begin{align*}
&~a =\begin{pmatrix} \norm{x_{2}-x_{1}}^{2}& \cdots&\norm{x_{k}-x_{1}}^{2}&\cdots&\norm{x_{m}-x_{1}}^{2} \end{pmatrix}^{\intercal},\\
&~b =\begin{pmatrix}
\norm{x_{1}-x_{k}}^{2}&\cdots&\norm{x_{k-1}-x_{k}}^{2}&\norm{x_{k+1}-x_{k}}^{2}&\cdots & \norm{x_{m}-x_{k}}^{2}
\end{pmatrix}^{\intercal}.
\end{align*}
For every $M \in \mathbb{R}^{n \times n}$, and for every $j \in \{1, 2, \ldots, n\}$,
\begin{empheq}[box=\mybluebox]{equation*}
\text{we denote the $j^{\text{th}}$ column of the matrix $M$ as $M_{*,j}$}.
\end{empheq}
In turn, for every $i \in \{1, \ldots, m-1\}$,
\begin{align*}
A_{i} =[A_{*,1}|\cdots|A_{*,i-1}|a|A_{*,i+1}|\cdots|A_{*,m-1}],
\end{align*}
and
\begin{align*}
B_{i} =[B_{*,1}|\cdots|B_{*,i-1}|b|B_{*,i+1}|\cdots|B_{*,m-1}].
\end{align*}
That is, $A_{i}$ is identical to $A$ except that column $A_{*,i}$ has been replaced by $a$ and $B_{i}$ is identical to $B$ except that column $B_{*,i}$ has been replaced by $b$.

\begin{lemma} \label{lem:CCSFormulasAlBe} The following statements hold:
\begin{enumerate}
\item \label{lem:SymFormCCSFormulasAl} $\begin{pmatrix} \alpha_{1} \cdots \alpha_{m-1} \end{pmatrix}^{\intercal}$ defined in \cref{eq:FormCCS:Param:Alpha} is the unique solution of the nonsingular system $Ay=\frac{1}{2}a$ where $y$ is the unknown variable. In consequence, for every $i \in \{1, \ldots, m-1\}$,
\begin{align*}
\alpha_{i}=\frac{\det(A_{i})}{2\delta}.
\end{align*}
\item \label{lem:SymFormCCSFormulasBe}
$\begin{pmatrix} \beta_{1} \cdots \beta_{m-1} \end{pmatrix}^{\intercal}$ defined in \cref{eq:FormCCS:Param:Beta} is the unique solution of the nonsingular system
$By=\frac{1}{2}b$ where $y$ is the unknown variable. In consequence, for every $i \in \{1, \ldots, m-1\}$,
\begin{align*}
\beta_{i}=\frac{\det(B_{i})}{2\delta}.
\end{align*}
\end{enumerate}
\end{lemma}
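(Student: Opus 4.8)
The plan is to observe that $A$ is nonsingular, rewrite the defining relation \cref{eq:FormCCS:Param:Alpha} as the linear system $Ay=\tfrac12 a$, and then read off the entrywise formula from Cramer's rule, being slightly careful with the factor $\tfrac12$.

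First I would record that, by the standing assumption of this section, $x_{1},\ldots,x_{m}$ are affinely independent, so \cref{fac:AffinIndeLineInd} gives that $x_{2}-x_{1},\ldots,x_{m}-x_{1}$ are linearly independent, and hence \cref{fact:Gram:inver} shows that $A=G(x_{2}-x_{1},\ldots,x_{m}-x_{1})$ is invertible; in particular the system $Ay=\tfrac12 a$ is nonsingular. Multiplying \cref{eq:FormCCS:Param:Alpha} on the left by $A$ shows that $\begin{pmatrix}\alpha_{1}&\cdots&\alpha_{m-1}\end{pmatrix}^{\intercal}=\tfrac12 A^{-1}a$ solves $Ay=\tfrac12 a$, and invertibility of $A$ makes it the unique solution. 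This establishes the first sentence of \cref{lem:SymFormCCSFormulasAl}.

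For the explicit formula I would apply \cref{fact:CramerRule} to the system $Ay=\tfrac12 a$: for each $i\in\{1,\ldots,m-1\}$ it yields $\alpha_{i}=\det(\widehat{A}_{i})/\det(A)$, where $\widehat{A}_{i}$ is the matrix obtained from $A$ by replacing its $i^{\text{th}}$ column with $\tfrac12 a$. Since the determinant is linear in each column, scaling the single replaced column by $\tfrac12$ scales the determinant by $\tfrac12$, so $\det(\widehat{A}_{i})=\tfrac12\det(A_{i})$, with $A_{i}$ the matrix defined right before the lemma (column $A_{*,i}$ replaced by $a$). Together with $\det(A)=\delta$ this gives $\alpha_{i}=\det(A_{i})/(2\delta)$. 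The proof of \cref{lem:SymFormCCSFormulasBe} is the same argument verbatim with $(A,a,\alpha_{i})$ replaced by $(B,b,\beta_{i})$: affine independence of $x_{1},\ldots,x_{m}$ is symmetric in the points, so $x_{1}-x_{k},\ldots,x_{k-1}-x_{k},x_{k+1}-x_{k},\ldots,x_{m}-x_{k}$ are linearly independent and $B$ is invertible by \cref{fact:Gram:inver}; finally \cref{prop:GramMatrSymm} lets us write $\det(B)=\delta$, yielding $\beta_{i}=\det(B_{i})/(2\delta)$.

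There is no genuine obstacle here — everything is an assembly of results already in hand. The only point that needs a moment's care is the bookkeeping of the factor $\tfrac12$: Cramer's rule applied to $Ay=\tfrac12 a$ produces a determinant whose relevant column is $\tfrac12 a$, and one must invoke multilinearity to pass to $A_{i}$ (whose column is $a$) and thereby land on the stated denominator $2\delta$ rather than $\delta$.
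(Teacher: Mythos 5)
Your proposal is correct and follows essentially the same route as the paper: identify $\begin{pmatrix}\alpha_{1}&\cdots&\alpha_{m-1}\end{pmatrix}^{\intercal}=\tfrac12 A^{-1}a$ as the unique solution of the nonsingular system $Ay=\tfrac12 a$ and then apply Cramer's rule (\cref{fact:CramerRule}), with \cref{prop:GramMatrSymm} giving $\det(B)=\det(A)=\delta\neq 0$ for the second part. Your explicit handling of the factor $\tfrac12$ via multilinearity of the determinant in the replaced column is a detail the paper leaves implicit, but it is exactly the right bookkeeping.
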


\begin{proof}
By assumption, $x_{1}, \ldots, x_{m}$ are affinely independent, and by \cref{prop:GramMatrSymm}, we know $\det(B)=\det(A)=\delta \neq 0$.

\cref{lem:SymFormCCSFormulasAl}: By definition of $\begin{pmatrix} \alpha_{1} \cdots \alpha_{m-1} \end{pmatrix}^{\intercal}$, 
\begin{align*}
\begin{pmatrix} \alpha_{1} \cdots \alpha_{m-1} \end{pmatrix}^{\intercal} = \frac{1}{2}A^{-1}a.
\end{align*}
Clearly we know it is the unique solution of the nonsingular system $Ay=\frac{1}{2}a$.  Hence the desired result follows directly from the \cref{fact:CramerRule}, the Cramer Rule.

\cref{lem:SymFormCCSFormulasBe}: Using the same method of proof of \cref{lem:SymFormCCSFormulasAl}, we can prove \cref{lem:SymFormCCSFormulasBe} .
\end{proof}

Using \cref{thm:unique:LinIndpPformula},
\cref{lem:CCSFormulasAlBe} and the equalities \cref{eq:SFCCS},
\cref{eq:SymForCCS:k} and \cref{eq:SymForCCS:Remaining}, we
readily obtain the following result. 

\begin{corollary}\label{cor:DifRepreCCS}  
Suppose that $x_{1}, \ldots, x_{m}$ are affinely independent. Then
\begin{align*}
\CCO(S) =\big(1- \textstyle \sum^{m-1}_{i=1} \alpha_{i}\big) 
x_{1} + \alpha_{1} x_{2} + \cdots+\alpha_{m-1} x_{m},
\end{align*}
where $(\forall i \in \{1, \ldots, m-1\})$ 
$\alpha_{i}= \frac{1}{2\delta} \det(A_{i})$.
Moreover, 
\begin{align*}
 1- \sum^{m-1}_{i=1} \alpha_{i}  =\frac{1}{2\delta}
 \det(B_{1}),~~~\alpha_{k-1}= 1- \sum^{m-1}_{i=1}
 \frac{1}{2\delta} \det(B_{i}),
 \end{align*}
 \begin{align*}
 (\forall i \in \{2, \ldots, k-1\}) \quad \alpha_{i -1}=\frac{1}{2\delta} 
 \det(B_{i}) \quad \text{and} \quad (\forall j \in \{k, k+1,
 \ldots, m-1\}) \quad \alpha_{j} =\frac{1}{2\delta} \det(B_{j}).
 \end{align*}
\end{corollary}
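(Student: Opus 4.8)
The plan is to obtain \cref{cor:DifRepreCCS} by assembling pieces that are already in place, with no new geometric argument required: the Gram-matrix formula of \cref{thm:unique:LinIndpPformula}, the Cramer-rule expressions of \cref{lem:CCSFormulasAlBe}, and the coefficient-matching identities \cref{eq:SFCCS}, \cref{eq:SymForCCS:k}, \cref{eq:SymForCCS:Remaining}. The entire argument is bookkeeping.

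First I would recall from \cref{thm:unique:LinIndpPformula} that, under affine independence of $x_{1}, \ldots, x_{m}$, we have $\CCO(S) = x_{1} + \sum_{i=1}^{m-1}\alpha_{i}(x_{i+1}-x_{1})$ with $\begin{pmatrix}\alpha_{1} & \cdots & \alpha_{m-1}\end{pmatrix}^{\intercal}$ given by \cref{eq:FormCCS:Param:Alpha}; collecting the $x_{1}$-terms turns this into the barycentric form \cref{eq:FormuSymmCCS:1}, namely $\CCO(S) = \big(1 - \sum_{i=1}^{m-1}\alpha_{i}\big)x_{1} + \alpha_{1}x_{2} + \cdots + \alpha_{m-1}x_{m}$. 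Then \cref{lem:SymFormCCSFormulasAl} in \cref{lem:CCSFormulasAlBe} identifies $\alpha_{i} = \det(A_{i})/(2\delta)$ for each $i$; here one records that $\delta = \det(A) = \det(B) \neq 0$ by \cref{prop:GramMatrSymm} together with affine independence and \cref{fact:Gram:inver}, so every quotient that appears is legitimate. This already yields the first displayed formula.

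For the remaining equalities I would substitute the Cramer-rule value $\beta_{i} = \det(B_{i})/(2\delta)$, furnished by \cref{lem:SymFormCCSFormulasBe} in \cref{lem:CCSFormulasAlBe}, into the three matching identities. From \cref{eq:SFCCS} one gets $1 - \sum_{i=1}^{m-1}\alpha_{i} = \beta_{1} = \det(B_{1})/(2\delta)$; from \cref{eq:SymForCCS:k} one gets $\alpha_{k-1} = 1 - \sum_{i=1}^{m-1}\beta_{i} = 1 - \sum_{i=1}^{m-1}\det(B_{i})/(2\delta)$; and from \cref{eq:SymForCCS:Remaining} one gets $\alpha_{i-1} = \beta_{i} = \det(B_{i})/(2\delta)$ for $i \in \{2, \ldots, k-1\}$ and $\alpha_{j} = \beta_{j} = \det(B_{j})/(2\delta)$ for $j \in \{k, k+1, \ldots, m-1\}$, which is exactly the asserted list.

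The only delicate point — and the closest thing to an obstacle — is keeping the two index conventions aligned: the $\alpha$'s are indexed along the ordered tuple $(x_{2}-x_{1}, \ldots, x_{m}-x_{1})$, whereas the $\beta$'s are indexed along $(x_{1}-x_{k}, \ldots, x_{k-1}-x_{k}, x_{k+1}-x_{k}, \ldots, x_{m}-x_{k})$, so the shift-by-one in the block $\{2, \ldots, k-1\}$ and the absence of a shift in the block $\{k, \ldots, m-1\}$ must be transcribed faithfully from \cref{eq:SymForCCS:Remaining}. Since the matching identities themselves were already established in the proposition preceding \cref{lem:CCSFormulasAlBe}, nothing further is needed.
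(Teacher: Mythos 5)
Your proposal is correct and follows essentially the same route as the paper, which derives the corollary by combining the formula from \cref{thm:unique:LinIndpPformula}, the Cramer-rule expressions in \cref{lem:CCSFormulasAlBe}, and the coefficient-matching identities \cref{eq:SFCCS}, \cref{eq:SymForCCS:k}, and \cref{eq:SymForCCS:Remaining}. Your added remarks on the nonvanishing of $\delta$ and on aligning the two index conventions are sensible elaborations of the same bookkeeping argument.
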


\section{Basic properties of the circumcenter} 
\label{sec:BasiPropCircOper}

In this section  we collect some fundamental 
properties of the circumcenter operator. 
Recall that 
\begin{empheq}[box=\mybluebox]{equation*}
m \in \mathbb{N} \smallsetminus \{0\},
\quad x_1,\ldots,x_m \text{~are vectors in $\mathcal{H}$},\quad 
\text{and} \quad S = \{x_{1}, \ldots, x_{m}\}. 
\end{empheq}

\begin{proposition}[scalar multiples]
\label{prop:CircumHomoge}
Let $\lambda \in \mathbb{R} \smallsetminus \{0\}$. 
Then 
$\CCO(\lambda S)=\lambda \CCO(S)$. 
\end{proposition}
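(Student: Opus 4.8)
The plan is to exploit the two defining conditions of the circumcenter together with their uniqueness (\cref{prop:unique:PExisUnique}, or equivalently \cref{rem:Circumcenter}). Write $\lambda S = \{\lambda x_1, \ldots, \lambda x_m\}$. First I would handle the case $\CCO(S) = \varnothing$: I would show that $\CCO(S)=\varnothing$ forces $\CCO(\lambda S)=\varnothing$ and conversely, so that the claimed identity $\CCO(\lambda S)=\lambda\CCO(S)$ holds in the degenerate sense as well (with the convention $\lambda\varnothing=\varnothing$, which is the natural reading since $\lambda\neq 0$). The cleanest way is to argue the contrapositive via the nonempty case applied to $\lambda^{-1}$: if $\CCO(\lambda S)=p\in\mathcal H$ then $\lambda^{-1}p$ witnesses $\CCO(S)\neq\varnothing$.

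So the heart of the matter is the case $\CCO(S)=p\in\mathcal H$. I would set $q=\lambda p$ and verify that $q$ satisfies the two conditions of \cref{rem:Circumcenter} for the set $\lambda S$. For condition (i): since $p\in\aff(S)$, \cref{fac:AffSubsExpre} gives $p=\sum_{i=1}^m \mu_i x_i$ with $\sum_i \mu_i=1$; then $q=\lambda p=\sum_{i=1}^m \mu_i(\lambda x_i)$ with the same coefficients summing to $1$, so $q\in\aff(\lambda S)$. For condition (ii): for each $i$, $\|q-\lambda x_i\| = \|\lambda(p-x_i)\| = |\lambda|\,\|p-x_i\|$, and since $\{\|p-x_i\|\}_{i=1}^m$ is a singleton, so is $\{\|q-\lambda x_i\|\}_{i=1}^m$ (it equals $|\lambda|$ times the common value). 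Hence by the uniqueness clause in \cref{rem:Circumcenter}, $\CCO(\lambda S)=q=\lambda\CCO(S)$, and (as a bonus) $\CRO(\lambda S)=|\lambda|\,\CRO(S)$, though that is not asked.

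I do not anticipate a genuine obstacle here; the only point requiring a moment's care is the bookkeeping around the empty-set case and making sure the statement is interpreted correctly when $\CCO(S)=\varnothing$ — the symmetric argument with $\lambda^{-1}$ (legitimate because $\lambda\neq 0$, which is exactly why the hypothesis $\lambda\in\mathbb R\smallsetminus\{0\}$ is imposed) disposes of it. Everything else is the elementary observation that $x\mapsto\lambda x$ is an affine bijection of $\mathcal H$ that scales all distances by the constant factor $|\lambda|$, hence preserves both "lies in the affine hull" and "is equidistant from the points," which are precisely the two properties pinning down the circumcenter.
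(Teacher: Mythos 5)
Your proof is correct and follows essentially the same route as the paper's: both verify that multiplication by $\lambda$ preserves the two defining conditions of \cref{rem:Circumcenter} (membership in the affine hull and equidistance) and then invoke uniqueness. The paper packages this as a single chain of equivalences, which also disposes of the $\varnothing$ case implicitly; your explicit treatment of that case via $\lambda^{-1}$ is just a more spelled-out version of the same argument.
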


\begin{proof}
Let $p \in \mathcal{H}$. 
By \cref{defn:Circumcenter}, 
\begin{align*}
p = \CCO(S) & \Longleftrightarrow \begin{cases}p \in \aff(S)\\ \{\norm{p-s}~|~s \in S \}~\text{is a singleton}~ \end{cases}\\
& \Longleftrightarrow \begin{cases}\lambda p \in \aff(\lambda S)\\ \{\norm{\lambda p-\lambda s}~|~\lambda s \in \lambda S \}~\text{is a singleton}~ \end{cases}\\
& \Longleftrightarrow p = \CCO(\lambda S),
\end{align*} 
and the result follows. 
\end{proof}

The next example below illustrates that 
we had to exclude the case $\lambda =0$ in \cref{prop:CircumHomoge}.
\begin{example}
Suppose that $\mathcal{H} =\mathbb{R}$ and that 
$S = \{0, -1, 1\}$. Then 
\begin{align*}
 \CCO(0\cdot S) =  \{0\}  \neq \varnothing  = 0\cdot \CCO(S).
\end{align*}
\end{example}

\begin{proposition}[translations]
\label{prop:CircumSubaddi}
Let $y \in \mathcal{H}$. Then
$\CCO(S+y)=\CCO(S)+y$.
\end{proposition}

\begin{proof}
Let $p \in \mathcal{H}$. By \cref{lem:AffineHull},
\begin{align*}
p \in \aff \{x_{1}, \ldots, x_{m}\} 
& \Longleftrightarrow (\exists~ \lambda_{1}, \ldots,\lambda_{m} \in \mathbb{R} ~\text{with}~\sum^{m}_{i=1} \lambda_{i}=1) \quad p=\sum^{m}_{i=1} \lambda_{i} x_{i}\\
& \Longleftrightarrow (\exists~ \lambda_{1}, \ldots,\lambda_{m} \in \mathbb{R} ~\text{with}~\sum^{m}_{i=1} \lambda_{i}=1) \quad p+y=\sum^{m}_{i=1} \lambda_{i} (x_{i}+y) \\
& \Longleftrightarrow p+y \in \aff \{x_{1}+y, \ldots, x_{m}+y\},
\end{align*}
that is
\begin{align} \label{eq:prop:CircumSubaddi}
p \in \aff (S) \Longleftrightarrow p+y \in \aff (S +y).
\end{align}
By \cref{eq:prop:CircumSubaddi} and \cref{rem:Circumcenter}, we have
\begin{align*}
p = \CCO(S) \in \mathcal{H}
& \Longleftrightarrow  \begin{cases}p \in \aff(S)\\ \{\norm{p-s}~|~s \in S \}~\text{is a singleton}~ \end{cases}\\
& \Longleftrightarrow  \begin{cases}p+y \in \aff(S+y)\\ \{\norm{(p+y)-(s+y)}~|~s+y \in S+y \}~\text{is a singleton}~ \end{cases}\\
& \Longleftrightarrow p+y = \CCO(S+y) \in \mathcal{H}.
\end{align*} 
Moreover, because $\varnothing =\varnothing +y$, the proof is
complete. 
\end{proof}

 \section{Circumcenters of sequences of sets} 
\label{sec:LimiCircOperSeqSet}
We uphold the assumption that 
\begin{empheq}[box=\mybluebox]{equation*}
m \in \mathbb{N} \smallsetminus \{0\},
\quad x_1,\ldots,x_m \text{~are vectors in $\mathcal{H}$},\quad 
\text{and} \quad S = \{x_{1}, \ldots, x_{m}\}. 
\end{empheq}

In this section, we explore the convergence of the circumcenter operator over a sequence of sets.

\begin{theorem} \label{thm:CCO:LimitCont}
Suppose that $\CCO(S) \in \mathcal{H}$. 
Then the following hold:
\begin{enumerate}
\item \label{prop:CCO:LimitCont:Linear} 
Set $t=\dim \Big( \spn\{x_{2}-x_{1}, \ldots, x_{m}-x_{1}\} \Big)$, 
and let $\widetilde{S}=\{x_{1}, x_{i_{1}}, \ldots, x_{i_{t}}\}
\subseteq S$ be such that $x_{i_{1}} -x_{1}, \ldots, x_{i_{t}}-x_{1}$ is a basis of $\spn\{x_{2}-x_{1}, \ldots, x_{m}-x_{1}\}$. 
Furthermore, let $\Big( (x^{(k)}_{1}, x^{(k)}_{i_{1}}, \ldots,
x^{(k)}_{i_{t}}) \Big)_{k \geq 1}$ $\subseteq$
$\mathcal{H}^{t+1}$ with $\lim_{k\rightarrow \infty}(
x^{(k)}_{1}, x^{(k)}_{i_{1}}, \ldots, x^{(k)}_{i_{t}})=(x_{1},
x_{i_{1}}, \ldots, x_{i_{t}})$, and 
set $(\forall k \geq 1)$ $\widetilde{S}^{(k)} = \{x^{(k)}_{1}, x^{(k)}_{i_{1}}, \ldots, x^{(k)}_{i_{t}}\}$. Then there exist $N \in \mathbb{N}$ such that for every $k \geq N$, $\CCO(\widetilde{S}^{(k)}) \in \mathcal{H}$ and
\begin{align*}
\lim_{k \rightarrow \infty} \CCO(\widetilde{S}^{(k)})= \CCO(\widetilde{S})=\CCO(S).
\end{align*}
\item  \label{prop:CCO:LimitCont:LinearIndep} 
Suppose that $ x_{1}, \ldots, x_{m-1}, x_{m}$ 
are affinely independent, 
and let $ \Big( (x^{(k)}_{1}, \ldots, x^{(k)}_{m-1}, x^{(k)}_{m})
\Big)_{k \geq 1}$ $\subseteq$  $\mathcal{H}^{m} $ satisfy
$\lim_{k\rightarrow \infty}( x^{(k)}_{1}, \ldots,x^{(k)}_{m-1},
x^{(k)}_{m})=(x_{1}, \ldots, x_{m-1},x_{m})$. Set $(\forall k \geq 1)$ $S^{(k)}=\{x^{(k)}_{1}, \ldots, x^{(k)}_{m-1}, x^{(k)}_{m}\}$. Then 
\begin{align*}
\lim_{k \rightarrow \infty} \CCO( S^{(k)} )= \CCO( S ).
\end{align*}
\end{enumerate}
\end{theorem}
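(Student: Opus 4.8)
For part \cref{prop:CCO:LimitCont:Linear}, the strategy is to reduce everything to the affinely independent case and then invoke the continuity of the Gram-matrix inverse. First I would note that since $x_{i_1}-x_1,\ldots,x_{i_t}-x_1$ is a basis of $\spn\{x_2-x_1,\ldots,x_m-x_1\}$, \cref{lem:Basis:AffineHullEq} tells us simultaneously that $x_1,x_{i_1},\ldots,x_{i_t}$ are affinely independent and that $\aff(\widetilde S)=\aff(S)$. Hence \cref{lem:unique:BasisPformula} gives $\CCO(\widetilde S)=\CCO(S)$, disposing of the last equality. Next, since $x_1,x_{i_1},\ldots,x_{i_t}$ are affinely independent, \cref{lem:AffineIndep:OpenSet} supplies an $N$ so that for all $k\geq N$ the points $x^{(k)}_1,x^{(k)}_{i_1},\ldots,x^{(k)}_{i_t}$ are affinely independent too; then \cref{thm:unique:LinIndpPformula} guarantees $\CCO(\widetilde S^{(k)})\in\mathcal H$ and gives the closed formula
\begin{align*}
\CCO(\widetilde S^{(k)}) = x^{(k)}_1 + \tfrac12\big(x^{(k)}_{i_1}-x^{(k)}_1,\ldots,x^{(k)}_{i_t}-x^{(k)}_1\big)\,G\big(x^{(k)}_{i_1}-x^{(k)}_1,\ldots\big)^{-1}\begin{pmatrix}\norm{x^{(k)}_{i_1}-x^{(k)}_1}^2\\ \vdots\\ \norm{x^{(k)}_{i_t}-x^{(k)}_1}^2\end{pmatrix},
\end{align*}
and likewise for $\CCO(\widetilde S)$ via \cref{cor:unique:BasisPformula}. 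Now I take $k\to\infty$: the vectors $x^{(k)}_{i_j}-x^{(k)}_1\to x_{i_j}-x_1$ and the norm-squared entries converge by continuity of $\norm{\cdot}$, while the Gram-matrix inverses converge by \cref{cor:GramInver:Continu}. Since matrix multiplication and vector addition are continuous, the whole expression converges to $\CCO(\widetilde S)$, which we have already identified with $\CCO(S)$.

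For part \cref{prop:CCO:LimitCont:LinearIndep}, I would observe this is essentially the special case of part \cref{prop:CCO:LimitCont:Linear} in which $\widetilde S=S$: when $x_1,\ldots,x_m$ are affinely independent, \cref{fac:AffinIndeLineInd} says $x_2-x_1,\ldots,x_m-x_1$ are linearly independent, so $t=m-1$ and we may take $\widetilde S=S$, $S^{(k)}=\widetilde S^{(k)}$. Applying part \cref{prop:CCO:LimitCont:Linear} directly yields $\lim_{k\to\infty}\CCO(S^{(k)})=\CCO(S)$. Alternatively — and perhaps cleaner to state — one repeats the closed-formula-plus-\cref{cor:GramInver:Continu} argument verbatim with the full index set.

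The main obstacle, and the reason part \cref{prop:CCO:LimitCont:Linear} is stated the way it is, is that $\CCO(S^{(k)})$ for the \emph{full} perturbed set $S^{(k)}$ need not even exist: perturbing affinely dependent points $x_1,\ldots,x_m$ generically destroys the affine dependence, so the perturbed points may fail to have a circumcenter in $\aff(S^{(k)})$ at all, and even when circumcenters exist they need not converge (this kind of pathology is presumably what the paper's later examples address). The device that sidesteps this is to perturb only a maximal affinely independent subset $\widetilde S$; then existence is stable by openness (\cref{lem:AffineIndep:OpenSet}) and convergence is governed by the explicit Gram-matrix formula, whose only non-elementary ingredient — continuity of the inverse — is exactly \cref{cor:GramInver:Continu}. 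So the proof itself is short; the real content is the correct formulation, and verifying that \cref{lem:Basis:AffineHullEq} and \cref{lem:unique:BasisPformula} legitimately let us pass between $S$ and $\widetilde S$ without changing the circumcenter.
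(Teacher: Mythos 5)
Your proposal follows essentially the same route as the paper's proof: reduce to the affinely independent subset via \cref{lem:Basis:AffineHullEq} and \cref{lem:unique:BasisPformula}, stabilize existence of the perturbed circumcenters via \cref{lem:AffineIndep:OpenSet} and \cref{thm:unique:LinIndpPformula}, pass to the limit in the Gram-matrix formula using \cref{cor:GramInver:Continu}, and obtain part \cref{prop:CCO:LimitCont:LinearIndep} as the special case $t=m-1$. The only detail the paper treats separately that you omit is the degenerate case where $S$ is a singleton (so $t=0$), for which the Gram-matrix formula does not apply and one argues directly from \cref{defn:Circumcenter}.
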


\begin{proof}
\cref{prop:CCO:LimitCont:Linear}: Let $l$ be the cardinality of
the set $S$. Assume first that $l=1$. Then $t=0$, and $\widetilde{S}=\{x_{1}\}$. Let $ (x^{(k)}_{1} )_{k \geq 1} \subseteq \mathcal{H}$ satisfy $\lim_{k \rightarrow \infty} x^{(k)}_{1}=x_{1}$. By \cref{defn:Circumcenter}, we know $\CCO(\{ x^{(k)}_{1} \}) =x^{(k)}_{1}$ and $\CCO(\{ x_{1}\})=x_{1}$. Hence   
 \begin{align*}
\lim_{k \rightarrow \infty} \CCO(\widetilde{S}^{(k)})= \lim_{k \rightarrow \infty} x^{(k)}_{1}=x_{1}=\CCO(S).
\end{align*}
Now assume that $l \geq 2$. By \cref{cor:unique:BasisPformula}
and \cref{lem:Basis:AffineHullEq} , we obtain
\begin{align} \label{eq:prop:CCO:LimitCont:1}
\CCO(S)=\CCO(\widetilde{S})=x_{1}+\frac{1}{2}(x_{i_{1}}-x_{1},\ldots,x_{i_{t}}-x_{1})
 G( x_{i_{1}}-x_{1},\ldots,x_{i_{t}}-x_{1})^{-1}
\begin{pmatrix}
 \norm{x_{i_{1}}-x_{1}}^{2} \\
 \vdots\\
 \norm{x_{i_{t}}-x_{1}}^{2}\\ 
\end{pmatrix}.
\end{align}
Using the assumptions and the \cref{lem:AffineIndep:OpenSet}, we know 
that there exists $N \in \mathbb{N}$ such that
\begin{align*}
(\forall k \geq N) \quad x^{(k)}_{1}, x^{(k)}_{i_{1}}, \ldots, x^{(k)}_{i_{t}}~\mbox{are affinely independent.}
\end{align*} 
By \cref{thm:unique:LinIndpPformula}, we know $(k \geq N)$ $\CCO(\widetilde{S}^{(k)}) \in \mathcal{H}$. Moreover, for every $k \geq N$,
\begin{align}  \label{eq:prop:CCO:LimitCont:1k}
\CCO(\widetilde{S}^{(k)})=x^{(k)}_{1}+
\frac{1}{2}(x^{(k)}_{i_{1}}-x_{1}^{(k)} ,\ldots,x^{(k)}_{i_{t}}-x_{1}^{(k)})
 G( x^{(k)}_{i_{1}}-x_{1}^{(k)} ,\ldots,x^{(k)}_{i_{t}}-x_{1}^{(k)})^{-1}
\begin{pmatrix}
 \norm{x^{(k)}_{i_{1}}-x_{1}^{(k)}}^{2} \\
 \vdots\\
\norm{x^{(k)}_{i_{t}}-x_{1}^{(k)}}^{2} \\
\end{pmatrix}.
\end{align}
Comparing \cref{eq:prop:CCO:LimitCont:1} with \cref{eq:prop:CCO:LimitCont:1k} and using \cref{cor:GramInver:Continu}, we obtain
\begin{align*}
\lim_{k \rightarrow \infty} \CCO(\widetilde{S}^{(k)})= \CCO(\widetilde{S})=\CCO(S).
\end{align*}

\cref{prop:CCO:LimitCont:LinearIndep}: Let $ x_{1}, \ldots, x_{m-1}, x_{m} \in \mathcal{H}$ be affinely independent. Then $t = m-1$ and $\widetilde{S} =S$. Substitute the $\widetilde{S}$ and $\widetilde{S}^{(k)}$ in part \cref{prop:CCO:LimitCont:Linear} by our $S$ and  $S^{(k)}$ respectively. 
Then we obtain 
\begin{align*}
\lim_{k \rightarrow \infty} \CCO( S^{(k)} )= \CCO( S )
\end{align*}
and the proof is complete. 
\end{proof}

\begin{corollary} \label{cor:Psi:Contin}
The mapping 
\begin{align*}
\Psi\colon \mathcal{H}^{m} \rightarrow \mathcal{H} \cup \{ \varnothing \}
\colon 
(x_{1}, \ldots, x_{m}) \mapsto  \CCO(\{x_{1}, \ldots, x_{m}\}).
\end{align*}
is continuous at every point 
$(x_{1}, \ldots, x_{m})\in \mathcal{H}^{m}$ where 
$x_{1}, \ldots, x_{m}$ is affinely independent. 
\end{corollary}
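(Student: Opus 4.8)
The plan is to deduce \cref{cor:Psi:Contin} directly from \cref{thm:CCO:LimitCont}\cref{prop:CCO:LimitCont:LinearIndep}, since the latter is precisely a sequential-continuity statement at a point where the coordinates are affinely independent. First I would fix a point $(x_1,\ldots,x_m)\in\mathcal{H}^m$ with $x_1,\ldots,x_m$ affinely independent; by \cref{thm:unique:LinIndpPformula} we already know $\Psi(x_1,\ldots,x_m)=\CCO(S)\in\mathcal{H}$, so the target value lies in $\mathcal{H}$ rather than at $\varnothing$, which is what makes continuity (as opposed to some degenerate behaviour) meaningful here.

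Next I would invoke the characterization of continuity via sequences in the (pseudo)metric space $\mathcal{H}^m$: $\Psi$ is continuous at $(x_1,\ldots,x_m)$ if and only if for every sequence $\big((x^{(k)}_1,\ldots,x^{(k)}_m)\big)_{k\geq 1}$ in $\mathcal{H}^m$ converging to $(x_1,\ldots,x_m)$ we have $\Psi(x^{(k)}_1,\ldots,x^{(k)}_m)\to\Psi(x_1,\ldots,x_m)$. Given such a sequence, set $S^{(k)}=\{x^{(k)}_1,\ldots,x^{(k)}_m\}$ and $S=\{x_1,\ldots,x_m\}$; then $\Psi(x^{(k)}_1,\ldots,x^{(k)}_m)=\CCO(S^{(k)})$ and $\Psi(x_1,\ldots,x_m)=\CCO(S)$ by definition of $\Psi$. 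Now \cref{thm:CCO:LimitCont}\cref{prop:CCO:LimitCont:LinearIndep}, applied with this sequence, yields $\lim_{k\to\infty}\CCO(S^{(k)})=\CCO(S)$, which is exactly the required convergence. (Implicit here, and worth a sentence, is that for all large $k$ the tuple $(x^{(k)}_1,\ldots,x^{(k)}_m)$ is again affinely independent by \cref{lem:AffineIndep:OpenSet}, so $\CCO(S^{(k)})\in\mathcal{H}$ and the limit is taken in $\mathcal{H}$; this is already built into the cited theorem.)

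Since the statement is essentially a restatement of an already-proved theorem in the language of the map $\Psi$, there is no real obstacle — the only point requiring minor care is the bookkeeping that $\Psi$ takes values in $\mathcal{H}\cup\{\varnothing\}$, so one should note that near an affinely independent tuple $\Psi$ in fact takes values in $\mathcal{H}$ (by \cref{lem:AffineIndep:OpenSet} and \cref{thm:unique:LinIndpPformula}), making the convergence an honest limit of $\mathcal{H}$-valued quantities. I would therefore keep the proof to two or three sentences: unwind the definition of $\Psi$, cite \cref{thm:CCO:LimitCont}\cref{prop:CCO:LimitCont:LinearIndep} for the sequential limit, and conclude continuity at the given point.
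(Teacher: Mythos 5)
Your proposal is correct and follows exactly the paper's route: the corollary is obtained by unwinding the definition of $\Psi$ and citing \cref{thm:CCO:LimitCont}\cref{prop:CCO:LimitCont:LinearIndep} as a sequential-continuity statement. The extra bookkeeping you note (affine independence persisting for large $k$ via \cref{lem:AffineIndep:OpenSet}, so the values stay in $\mathcal{H}$) is already contained in the cited theorem, as you observe.
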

\begin{proof}
This follows directly from \cref{thm:CCO:LimitCont}\cref{prop:CCO:LimitCont:LinearIndep}.
\end{proof}

Let us record the doubleton case explicitly.

\begin{proposition} \label{prop:CircumOperaContin2Points}
Suppose that $m=2$. 
Let $\big( ( x^{(k)}_{1},x^{(k)}_{2}) \big)_{k \geq 1} \subseteq
\mathcal{H}^{2}$ satisfy $\lim_{k \rightarrow \infty}( x^{(k)}_{1},x^{(k)}_{2})=(x_{1},x_{2})$. 
Then
\begin{align*}
\lim_{k \rightarrow \infty} \CCO(\{ x^{(k)}_{1},x^{(k)}_{2}\})=\CCO(\{x_{1},x_{2}\}).
\end{align*} 
\end{proposition}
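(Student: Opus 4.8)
The plan is to reduce everything to the explicit two‑point formula already recorded in \cref{exam:CircForTwoPoints} and then invoke continuity of the vector‑space operations on $\mathcal{H}$. First I would note that for \emph{every} pair of vectors $y_{1},y_{2}\in\mathcal{H}$ one has $\CCO(\{y_{1},y_{2}\})=\tfrac{y_{1}+y_{2}}{2}\in\mathcal{H}$: when $y_{1}\neq y_{2}$ this is exactly \cref{exam:CircForTwoPoints}, and when $y_{1}=y_{2}$ the set $\{y_{1},y_{2}\}$ is the singleton $\{y_{1}\}$, whose circumcenter is $y_{1}=\tfrac{y_{1}+y_{2}}{2}$ by \cref{defn:Circumcenter}. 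In particular the circumcenter here never equals $\varnothing$, so the midpoint formula is valid uniformly along the whole sequence as well as in the limit.

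Second, I would apply this to each term and to the limit point, rewriting $\CCO(\{x^{(k)}_{1},x^{(k)}_{2}\})=\tfrac{x^{(k)}_{1}+x^{(k)}_{2}}{2}$ and $\CCO(\{x_{1},x_{2}\})=\tfrac{x_{1}+x_{2}}{2}$, so that the assertion to be proved becomes simply $\lim_{k\to\infty}\tfrac{x^{(k)}_{1}+x^{(k)}_{2}}{2}=\tfrac{x_{1}+x_{2}}{2}$. The hypothesis $\lim_{k\to\infty}(x^{(k)}_{1},x^{(k)}_{2})=(x_{1},x_{2})$ in $\mathcal{H}^{2}$ unpacks into the two coordinatewise limits $x^{(k)}_{i}\to x_{i}$ for $i\in\{1,2\}$, and then the triangle‑inequality estimate $\norm{\tfrac{x^{(k)}_{1}+x^{(k)}_{2}}{2}-\tfrac{x_{1}+x_{2}}{2}}\le\tfrac{1}{2}\norm{x^{(k)}_{1}-x_{1}}+\tfrac{1}{2}\norm{x^{(k)}_{2}-x_{2}}\to 0$ closes the argument.

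There is essentially no genuine obstacle here; the only point that merits a sentence of care is reconciling the degenerate case $x_{1}=x_{2}$ (and, for individual indices, $x^{(k)}_{1}=x^{(k)}_{2}$) with \cref{exam:CircForTwoPoints}, so as to be sure we never drop into the $\varnothing$‑branch of $\CCO$ along the way. One could instead attempt to derive the proposition from \cref{thm:CCO:LimitCont}\cref{prop:CCO:LimitCont:Linear} (or \cref{cor:Psi:Contin}), but that route is clumsier precisely in this degenerate situation — the approximating sets $\{x^{(k)}_{1},x^{(k)}_{2}\}$ may be genuine doubletons while the limit set $\{x_{1},x_{2}\}$ is a singleton, a configuration not directly covered there — so I would keep the direct argument via \cref{exam:CircForTwoPoints}.
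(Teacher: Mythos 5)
Your argument is correct and coincides with the paper's own proof, which likewise reduces the claim to the midpoint formula of \cref{exam:CircForTwoPoints} and passes to the limit. Your extra sentence handling the degenerate case $y_{1}=y_{2}$ is a harmless (and arguably welcome) clarification, since \cref{exam:CircForTwoPoints} is stated for arbitrary, not necessarily distinct, points.
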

\begin{proof}
Indeed, we deduce from \cref{exam:CircForTwoPoints} that 
\begin{align*}
\lim_{k \rightarrow \infty} \CCO( \{ x^{(k)}_{1},x^{(k)}_{2}\})=\lim_{k \rightarrow \infty} \frac{x^{(k)}_{1}+x^{(k)}_{2} }{2} =\frac{x_{1}+x_{2}}{2}=\CCO( \{x_{1},x_{2}\}) 
\end{align*}
and the result follows.
\end{proof}

The following example illustrates 
that the assumption that ``$m=2$'' in \cref{prop:CircumOperaContin2Points} 
cannot be replaced by ``the cardinality of $S$ is 2''.

\begin{example} \label{exam:CounterExampleContinuity:empty }
Suppose that 
$\mathcal{H}=\mathbb{R}^{2}$, that $m=3$, and that 
$S=\{x_{1}, x_{2},x_{3}\}$ with $x_{1}=(-1,0)$, $x_{2}=x_{3}=(1,0)$. Then there exists $( (x^{(k)}_{1}, x^{(k)}_{2},x^{(k)}_{3}) )_{k \geq 1} \subseteq \mathcal{H}^{3}$ such that
\begin{align*}
\lim_{k \rightarrow \infty} \CCO( \{ x^{(k)}_{1}, x^{(k)}_{2},x^{(k)}_{3}\}) \neq \CCO(S).
\end{align*}
\end{example}

\begin{proof}
For every $k \geq 1$, let $(x^{(k)}_{1}, x^{(k)}_{2},x^{(k)}_{3}) = \Big( (-1,0), (1,0), (1+\frac{1}{k},0) \Big) \in \mathcal{H}^{3}$. Then by \cref{defn:Circumcenter}, we know that $(\forall k \geq 1)$, $\CCO(S^{(k)})= \varnothing$, since there is no point in $\mathbb{R}^{2}$ which has equal distance to all of the three points.
On the other hand, by \cref{defn:Circumcenter} again, we know $\CCO(S)=(0,0) \in \mathcal{H}$. Hence $\lim_{k \rightarrow \infty} \CCO( \{ x^{(k)}_{1}, x^{(k)}_{2},x^{(k)}_{3}\}) = \varnothing \neq (0,0) = \CCO(S)$.
\end{proof}

The following question now naturally arises:

\begin{question} \label{quest:CCS:Contin}
Suppose that $\CCO( \{x_{1},x_{2},x_{3}\}) \in \mathcal{H}$, 
and let $\big(
(x^{(k)}_{1}, x^{(k)}_{2},x^{(k)}_{3}) \big)_{k \geq 1} \subseteq
\mathcal{H}^{3}$ be such that $\lim_{k \rightarrow \infty} (x^{(k)}_{1},
x^{(k)}_{2},x^{(k)}_{3})=(x_{1},x_{2},x_{3})$. Is it true that
the implication 
\begin{equation*}
 (\forall k \geq 1) \quad \CCO(\{x^{(k)}_{1}, x^{(k)}_{2},x^{(k)}_{3}\}) \in \mathcal{H}  \Longrightarrow \lim_{k \rightarrow \infty} \CCO(\{x^{(k)}_{1}, x^{(k)}_{2},x^{(k)}_{3}\}) = \CCO( \{x_{1},x_{2},x_{3}\}) 
\end{equation*}
holds?
\end{question}
When $x_{1},x_{2}, x_{3}$ are affinely independent, then 
\cref{thm:CCO:LimitCont}\cref{prop:CCO:LimitCont:LinearIndep}
gives us an affirmative answer. 
However, the answer is negative if 
$x_{1},x_{2}, x_{3}$ are not assumed to be affinely independent.

\begin{example} \label{exam:CounterExampleContinuity }
Suppose that 
$\mathcal{H}=\mathbb{R}^{2}$ and $S=\{x_{1}, x_{2},x_{3}\}$ with $x_{1}=(-2,0)$, $x_{2}=x_{3}=(2,0)$. 
Then there exists a sequence 
$\big( (x^{(k)}_{1}, x^{(k)}_{2},x^{(k)}_{3}) \big)_{k \geq 1} \subseteq \mathcal{H}^{3}$ such that
\begin{enumerate}
\item \label{exam:CounExamCont:condi} $\lim_{k \rightarrow \infty} (x^{(k)}_{1}, x^{(k)}_{2},x^{(k)}_{3})=(x_{1},x_{2},x_{3})$,
\item \label{exam:CounExamCont:i} $(\forall k \geq 1) \quad \CCO(\{x^{(k)}_{1}, x^{(k)}_{2},x^{(k)}_{3}\}) \in \mathbb{R}^{2}$, and
\item \label{exam:CounExamCont:ii} $\lim_{k \rightarrow \infty} \CCO( \{ x^{(k)}_{1}, x^{(k)}_{2},x^{(k)}_{3}\}) \neq \CCO(S)$.
\end{enumerate}
\end{example}
\begin{proof}
By \cref{defn:Circumcenter}, we know that $\CCO(S)=(0,0)\in \mathcal{H}$. 
Set 
\begin{align*}
(\forall k \geq 1) \quad S^{(k)}=\{x^{(k)}_{1},
x^{(k)}_{2},x^{(k)}_{3}\}=\Big\{(-2,0), (2,0),
\big(2-\tfrac{1}{k},\tfrac{1}{4k}\big) \Big\}.
\end{align*} 
\cref{exam:CounExamCont:condi}: In this case,
\begin{align*}
\lim_{k \rightarrow \infty} (x^{(k)}_{1},
x^{(k)}_{2},x^{(k)}_{3}) =&\lim_{k \rightarrow \infty} \Big(
(-2,0), (2,0), \big(2-\tfrac{1}{k},\tfrac{1}{4k}\big) \Big)\\
=&\big( (-2,0),(2,0),(2,0) \big) \\
=&(x_{1},x_{2},x_{3}).
\end{align*}
\cref{exam:CounExamCont:i}: 
It is clear that for every  $k \geq 1$, 
the vectors $(-2,0), (2,0), (2-\frac{1}{k},\frac{1}{4k}) $ are
not colinear, that is, $(-2,0), (2,0),
(2-\frac{1}{k},\frac{1}{4k})$ are affinely independent. By
\cref{thm:unique:LinIndpPformula}, we see that
\begin{align*}
(\forall k \geq 1) \quad \CCO(\{x^{(k)}_{1}, x^{(k)}_{2},x^{(k)}_{3}\}) \in \mathbb{R}^{2}.
\end{align*}
\cref{exam:CounExamCont:ii}: Let $k \geq 1$. By definition of
$\CCO(S^{(k)})$ and \cref{exam:CounExamCont:i}, we deduce that 
$\CCO(S^{(k)}) =(p^{(k)}_{1}, p^{(k)}_{2}) \in \mathbb{R}^{2}$ and
that 
\begin{align*}
\norm{\CCO(S^{(k)}) -x^{(k)}_{1} }= \norm{\CCO(S^{(k)}) -x^{(k)}_{2} }=\norm{\CCO(S^{(k)}) -x^{(k)}_{3} }.
\end{align*} 
Because $ \CCO(S^{(k)})$ must be in the intersection of the perpendicular bisector of $ x^{(k)}_{1}=(-2,0), x^{(k)}_{2}=(2,0)$ and the perpendicular bisector of $ x^{(k)}_{2}=(2,0), x^{(k)}_{3}=(2-\frac{1}{k},\frac{1}{4k})$, 
we obtain 
\begin{align*}
p^{(k)}_{1}=0 \quad \text{and} 
\quad p^{(k)}_{2}=4\big( p^{(k)}_{1} -
\frac{2+2-\frac{1}{k}}{2}\big)+\tfrac{1}{8k};
\end{align*}
thus, 
\begin{align} \label{eq:exam:CounterExampleContinuity:SkFormula}
 \CCO(S^{(k)})=(p^{(k)}_{1}, p^{(k)}_{2}) =
 \big(0, -8+\tfrac{2}{k}+\tfrac{1}{8k}\big).
 \end{align}
(Alternatively, we can use the formula in \cref{thm:unique:LinIndpPformula} to get \cref{eq:exam:CounterExampleContinuity:SkFormula}). 
Therefore, 
\begin{align*}
\lim_{k \rightarrow \infty} \CCO(S^{(k)})= \lim_{k \rightarrow
\infty} \big(0, -8+\tfrac{2}{k} +\tfrac{1}{8k}\big)=(0,-8) \neq
(0,0)=\CCO(S), 
\end{align*}
and the proof is complete. 

As the picture below shows, $(\forall k \geq 1)$  $x^{(k)}_{3}=(2-\frac{1}{k},\frac{1}{4k})$ converges to $x_{3}=(2,0)$ along the purple line $L=\{(x,y) \in \mathbb{R}^{2}~|~y=-\frac{1}{4}(x-2) \}$. In fact, $\CCO(S^{(k)})$ is just the intersection point between the two lines $M_{1}$ and $M_{2}$, where $M_{1}$ is the perpendicular bisector between the points $x_{1}$ and $x_{2}$, and $M_{2}$ is the perpendicular bisector between the points $x^{(k)}_{3}$ and $x_{2}$.
\begin{figure}[H] \label{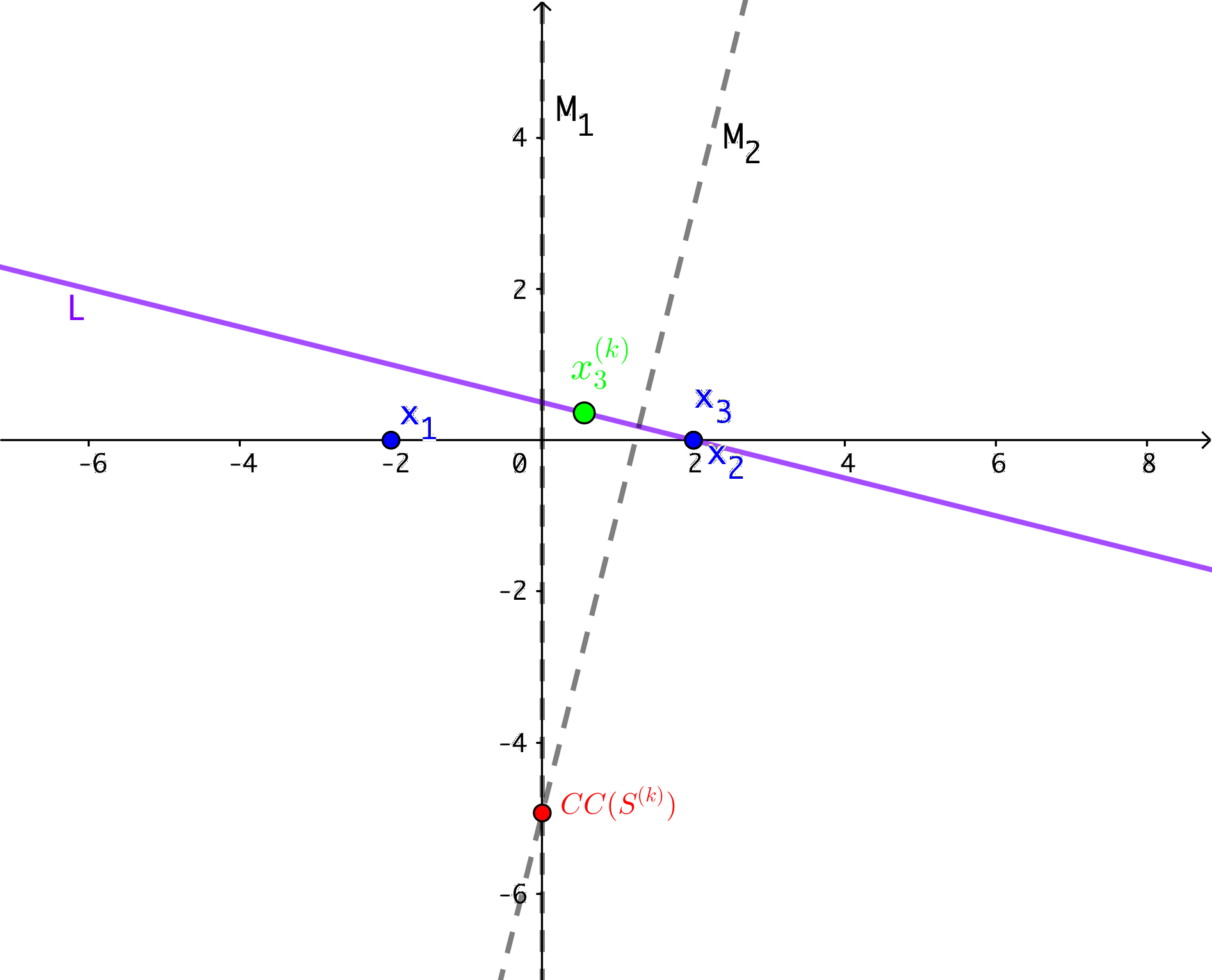}
\begin{center}\includegraphics[scale=0.6]{CountExampContinu.png}
\end{center}
\caption{Continuity of circumcenter operator may fail even when
$(\forall k \geq 1)$ $\CCO(S^{(k)}) \in \mathcal{H}$.}
\end{figure}
\end{proof}

\section{The circumcenter of three points} 
\label{sec:CircThreePoints}

In this section, we study the circumcenter of a set containing 
three points. 
We will give a characterization of the existence of circumcenter
of three pairwise distinct points. In addition, we shall provide 
asymmetric and symmetric formulae.

\begin{theorem}\label{thm:clform:three}
Suppose that 
$S=\{x,y,z\} \in \mathcal{P}(\mathcal{H})$ and that  $l=3$ 
is the cardinality of $S$. 
Then $x, y, z$ are affinely independent if and only if 
$\CCO(S) \in \mathcal{H}$. 
\end{theorem}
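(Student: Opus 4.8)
The plan is to prove both directions separately, exploiting the hypothesis that $S = \{x,y,z\}$ genuinely has three distinct elements. One direction is immediate from earlier work: if $x,y,z$ are affinely independent, then \cref{thm:unique:LinIndpPformula} directly gives $\CCO(S) \in \mathcal{H}$ (indeed with an explicit formula). So the substance of the theorem is the converse: assuming $\CCO(S) \in \mathcal{H}$, show that $x,y,z$ must be affinely independent. Since $l = 3$, affine \emph{dependence} would mean that $\dim(\aff S) \leq 1$, i.e.\ $x,y,z$ are collinear while still being pairwise distinct; the goal is to derive a contradiction from the existence of a point in $\aff S$ equidistant from all three.

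For the converse, I would argue by contraposition: suppose $x,y,z$ are affinely dependent. Being pairwise distinct and affinely dependent, they lie on a common line $\ell = \aff S = x + \spn\{y-x\}$, and $y - x \neq 0$. Write $z = x + \mu(y-x)$ for a unique scalar $\mu \in \mathbb{R}$; distinctness of the three points forces $\mu \notin \{0,1\}$. Now suppose toward a contradiction that $\CCO(S) = p \in \mathcal{H}$, so $p \in \ell$ and $\norm{p-x} = \norm{p-y} = \norm{p-z}$. By \cref{cor:LongNormEqInnNorm} (with $x_1 = x$, $x_2 = y$, $x_3 = z$), equidistance is equivalent to $\innp{p-x, y-x} = \tfrac12\norm{y-x}^2$ together with $\innp{p-x, z-x} = \tfrac12\norm{z-x}^2$. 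Substituting $z - x = \mu(y-x)$ into the second equation gives $\mu\innp{p-x, y-x} = \tfrac12\mu^2\norm{y-x}^2$, i.e.\ (dividing by $\mu \neq 0$) $\innp{p-x, y-x} = \tfrac12\mu\norm{y-x}^2$. Comparing with the first equation yields $\tfrac12\norm{y-x}^2 = \tfrac12\mu\norm{y-x}^2$, hence $\mu = 1$ since $\norm{y-x}\neq 0$ — contradicting $\mu \neq 1$. Therefore no such $p$ exists, i.e.\ $\CCO(S) = \varnothing$, which is the contrapositive of the claim.

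The only mild subtlety worth spelling out is the role of the cardinality hypothesis $l = 3$: it is what guarantees that affine dependence of the three points is equivalent to collinearity with all three distinct, which is exactly what makes $\mu \notin \{0,1\}$ available and drives the contradiction. Without distinctness the statement fails, as \cref{exam:CircForTwoPoints} together with examples like the one in \cref{exam:CounterExampleContinuity:empty } illustrate. I do not anticipate a genuine obstacle here; the one-dimensional linear-algebra computation above is the entire content, and it reuses \cref{cor:LongNormEqInnNorm} exactly as intended. An alternative but essentially equivalent route is to invoke \cref{prop:NormEqInnNorm} pairwise and observe that on the line $\ell$ the perpendicular bisector of $\{x,y\}$ (as a subset of $\ell$) is a single point $\tfrac{x+y}{2}$, similarly for $\{y,z\}$, and these two points are distinct when $x,y,z$ are distinct and collinear — so no point of $\ell$ can be equidistant from all three; I would probably present the direct computation as it is shorter and self-contained.
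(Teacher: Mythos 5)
Your proof is correct and follows essentially the same route as the paper: the forward direction is immediate from \cref{thm:unique:LinIndpPformula}, and the converse reduces to showing that no point can be equidistant from three pairwise distinct collinear points. The only (immaterial) difference is that the paper finishes via the projection characterization in \cref{prop:NormEqInnNorm}, forcing $\CCO(S)=\tfrac{x+y}{2}=\tfrac{x+z}{2}$ and hence $y=z$, whereas you use the inner-product characterization of \cref{cor:LongNormEqInnNorm} together with the collinearity parameter $\mu$ — precisely the alternative you yourself note at the end.
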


\begin{proof}
If $S$ is affinely independent, then $\CCO(S) \in \mathcal{H}$ by 
\cref{thm:unique:LinIndpPformula}. 

To prove the converse implication, suppose that 
$\CCO(S) \in \mathcal{H}$, i.e.,
\begin{enumerate}
\item $\CCO(S) \in \aff \{x,y,z\}$ , and
\item $\norm{\CCO(S)-x}=\norm{\CCO(S)-y}=\norm{\CCO(S)-z}$.
\end{enumerate}
We argue by contradiction and thus assume that 
the elements of $S$ are affinely dependent:
\begin{align*}
\dim( \spn\{S-x\}) = \dim ( \spn \{y-x, z-x\}) \leq 1.
\end{align*}
Note that $y-x \neq 0$ and $z-x \neq 0$. Set
\begin{align*}
U = x + \spn \{y-x, z-x\} = x +\spn \{y-x\} = x+\spn \{z -x\}.
\end{align*}
Combining with \cref{lem:AffineHull}, we get
\begin{align} \label{eq:thm:three:URepre}
U =\aff \{x, y,z\} = \aff\{x,y\} =\aff\{x,z\}.
\end{align}
By definition of $\CCO(S)$, we have 

\begin{align} \label{eq:thm:clform:three:p1}
\CCO(S) \in \aff \{x,y\} \stackrel{\cref{eq:thm:three:URepre}}{=} U \quad \mbox{and} \quad \norm{\CCO(S)-x}=\norm{\CCO(S)-y},
\end{align}
and 
\begin{align} \label{eq:thm:clform:three:p2}
\CCO(S) \in \aff \{x,z\} \stackrel{\cref{eq:thm:three:URepre}}{=}  U \quad \mbox{and} \quad \norm{\CCO(S)-x}=\norm{\CCO(S)-z}.
\end{align}
Now using \cref{prop:NormEqInnNorm:Norm} $\Leftrightarrow$ \ref{prop:NormEqInnNorm:ProjeEqu} in \cref{prop:NormEqInnNorm} and using \cref{eq:thm:clform:three:p1}, we get
\begin{align*}
\CCO(S)=P_{U}\Big(\CCO(S) \Big)=\frac{x+y}{2}.
\end{align*}
Similarly,  using \cref{prop:NormEqInnNorm:Norm} $\Leftrightarrow$ \cref{prop:NormEqInnNorm:ProjeEqu} in \cref{prop:NormEqInnNorm} and using  \cref{eq:thm:clform:three:p2}, we can also get 
\begin{align*}
\CCO(S)=P_{U}\Big(\CCO(S) \Big)=\frac{x+z}{2}.
\end{align*}
Therefore, 
\begin{align*}
\frac{x+y}{2} =\CCO(S)=\frac{x+z}{2} 
\Longrightarrow y=z,
\end{align*}
which contradicts the assumption that  $l=3$.  
The proof is complete. 
\end{proof}

In contrast, when 
the cardinality of $S$ is $4$, then 
\begin{empheq}[box=\mybluebox]{equation*}
\CCO(S) \in \mathcal{H} \not \Rightarrow ~\text{elements of $S$
are affinely independent} 
\end{empheq}
as the following example demonstrates. 
Thus the characterization of the existence of circumcenter in
\cref{thm:clform:three} is generally not true when we consider
$l\geq 3$ pairwise distinct points.

\begin{example} \label{exam:AffinDepExist}
Suppose that 
$\mathcal{H}=\mathbb{R}^{2}$, that $m=4$, and 
$S = \{x_{1},x_{2}, x_{3},x_{4}\}$,
where $x_{1}=(0,0)$, $x_{2}=(4,0)$, $x_{3}=(0,4)$, and $x_{4}=(4,4)$ 
(see \cref{CircFourAffineDep.png}). 
Then $x_{1},x_{2}, x_{3},x_{4}$ are pairwise distinct and affinely dependent, yet $\CCO(S) = (2,2)$.
\end{example}

\begin{figure}[H]
\begin{center}\includegraphics[scale=0.8]{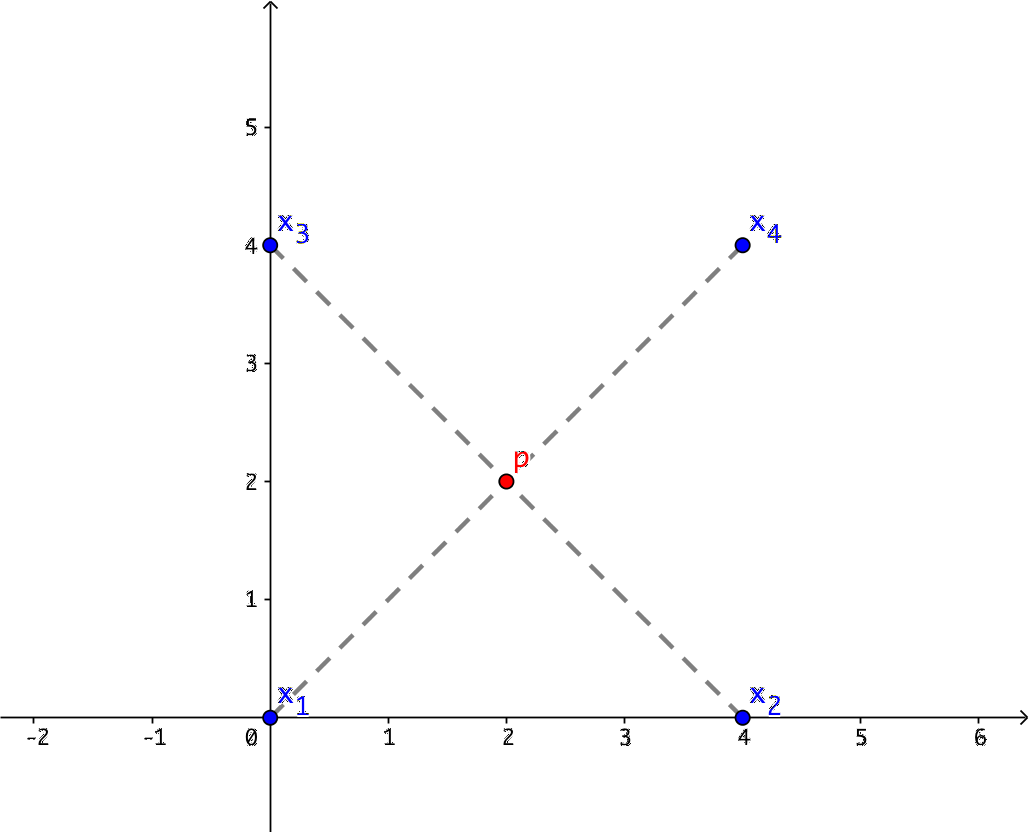}
\end{center}
\caption{Circumcenter of the four affinely dependent points from
\cref{exam:AffinDepExist}.} \label{CircFourAffineDep.png}
\end{figure}

In \cref{prop:unique:PExisUnique} and \cref{thm:unique:LinIndpPformula} above, 
where we presented formulae for $\CCO(S)$, we  gave special
importance to the first point $x_{1}$ in $S$. 
We now provide some longer yet symmetric formulae for $\CCO(S)$.

\begin{remark}\label{rem:SymFormCircu}
Suppose that $S=\{x,y,z\}$ and that $l=3$ is the cardinality of $S$. 
Assume furthermore that $\CCO(S) \in \mathcal{H}$, i.e., there is 
an unique point $\CCO(S)$ satisfying 
\begin{enumerate}
\item $\CCO(S) \in \aff \{x,y,z\}$  and
\item $\norm{\CCO(S)-x}=\norm{\CCO(S)-y}=\norm{\CCO(S)-z}$.
\end{enumerate}
By \cref{thm:clform:three}, the vectors $x, y, z$ must be
affinely independent. 
From \cref{thm:unique:LinIndpPformula} we obtain 
\begin{align*}
\CCO(S) = &~ x+\frac{1}{2} (y-x,z-x)\begin{pmatrix} 
\norm{y-x}^{2} & \innp{y-x, z-x}  \\ 
\innp{z-x,y-x} & \norm{z-x}^{2}  \\
\end{pmatrix} ^{-1}
\begin{pmatrix}
 \norm{y-x}^{2} \\
 \norm{z-x}^{2} \\
\end{pmatrix} \\
= &~ x+\frac{(\norm{y-x}^{2} \norm{z-x}^{2} -\norm{z-x}^{2}\innp{y-x, z-x})(y-x)}{2(\norm{y-x}^{2}\norm{z-x}^{2}- \innp{y-x, z-x}^{2})} \\
&~ +\frac{ (\norm{y-x}^{2} \norm{z-x}^{2} -\norm{y-x}^{2}\innp{y-x, z-x})(z-x) }{2(\norm{y-x}^{2}\norm{z-x}^{2}- \innp{y-x, z-x}^{2})} \\
= &~ \frac{1}{K_{1}}\Big(  \norm{y-z}^{2} \innp{x-z,x-y}x+ \norm{x-z}^{2} \innp{y-z,y-x}y+ \norm{x-y}^{2} \innp{z-x,z-y}z \Big),
\end{align*}
where $K_{1}=2(\norm{y-x}^{2}\norm{z-x}^{2}- \innp{y-x, z-x}^{2})$.\\
Similarly, 
\begin{align*}
\CCO(S)=\frac{1}{K_{2}}\Big(  \norm{y-z}^{2} \innp{x-z,x-y}x+\norm{x-z}^{2} \innp{y-z,y-x}y+ \norm{x-y}^{2} \innp{z-x,z-y}z \Big),
\end{align*}
where $K_{2}=2(\norm{x-y}^{2}\norm{z-y}^{2}- \innp{x-y, z-y}^{2})$ and
\begin{align*}
\CCO(S)=\frac{1}{K_{3}}\Big(  \norm{y-z}^{2} \innp{x-z,x-y}x+ \norm{x-z}^{2} \innp{y-z,y-x}y+ \norm{x-y}^{2} \innp{z-x,z-y}z \Big),
\end{align*}
where $K_{3}=2(\norm{x-z}^{2}\norm{y-z}^{2}- \innp{x-z, y-z}^{2})$.

In view \cref{prop:unique:PExisUnique} (the uniqueness of the
circumcenter), we now average the three formulae from above to
obtain the following symmetric formula for $p$: 
\begin{align*}
\CCO(S)= \frac{1}{K}\Big(  \norm{y-z}^{2} \innp{x-z,x-y}x+ \norm{x-z}^{2} \innp{y-z,y-x}y+\norm{x-y}^{2} \innp{z-x,z-y}z \Big),
\end{align*}
where 
$K=\frac{1}{6} \Big( \frac{1}{\norm{y-x}^{2}\norm{z-x}^{2}- \innp{y-x, z-x}^{2}}+\frac{1}{\norm{x-y}^{2}\norm{z-y}^{2}- \innp{x-y, z-y}^{2}}+\frac{1}{\norm{x-z}^{2}\norm{y-z}^{2}- \innp{x-z, y-z}^{2}} \Big)$.
In fact, \cref{prop:GramMatrSymm} yields $K_{1}=K_{2}=K_{3}$.
\end{remark}

We now summarize the above discussion so far in the following two
pleasing main results. 

\begin{theorem}[nonsymmetric formula for the circumcenter] 
\label{thm:SymForm}
Suppose that $S=\{x,y,z\}$ and denote the cardinality of $S$ by
$l$. 
Then exactly one of the following cases occurs: 
\begin{enumerate}
\item \label{thm:SymForm:1} $l=1$ and $\CCO(S) =x$.
\item \label{thm:SymForm:2} $l=2$, say $S=\{u,v\}$, where $u,v \in S$ 
and $u \neq v$, and $\CCO(S)=\frac{u+v}{2}$.
\item \label{thm:SymForm:3} 
$l=3$ and exactly one of the following two cases occurs: 
\begin{enumerate}
\item \label{thm:SymForm:3:a} 
$x, y, z$ are affinely independent; equivalently, 
$\norm{y-x}\norm{z-x} > \innp{y-x,z-x}$, and 
\begin{align*}
\CCO(S) = \frac{\norm{y-z}^{2} \innp{x-z,x-y}x+ \norm{x-z}^{2} \innp{y-z,y-x}y+ \norm{x-y}^{2} \innp{z-x,z-y}z }{2(\norm{y-x}^{2}\norm{z-x}^{2}- \innp{y-x, z-x}^{2})}.
\end{align*}
\item \label{thm:SymForm:3:b} 
$x, y, z$ are affinely dependent; equivalently, 
$\norm{y-x}\norm{z-x} = \innp{y-x,z-x}$, and $\CCO(S) =
\varnothing $. 
\end{enumerate}
\end{enumerate}
\end{theorem}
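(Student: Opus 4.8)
The plan is to organize the proof around the cardinality $l$ of $S=\{x,y,z\}$, verifying that exactly one of the three numbered cases applies and that the stated formula holds in each. First I would dispense with the easy cases. If $l=1$, then $S$ is a singleton and \cref{defn:Circumcenter} immediately gives $\CCO(S)=x$, settling \cref{thm:SymForm:1}. If $l=2$, then $S=\{u,v\}$ with $u\ne v$, and \cref{exam:CircForTwoPoints} gives $\CCO(S)=\frac{u+v}{2}$, settling \cref{thm:SymForm:2}. Since $l$ is a well-defined cardinality, these cases are mutually exclusive, so it remains to treat $l=3$ and show that \cref{thm:SymForm:3:a} and \cref{thm:SymForm:3:b} are exhaustive and mutually exclusive alternatives there.

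For the case $l=3$, the dichotomy between affine independence and affine dependence of $x,y,z$ is exhaustive and exclusive by definition, so the task splits into two pieces. First, I would establish the claimed equivalence between each geometric condition and the corresponding inequality/equality involving $\norm{y-x}\norm{z-x}$ and $\innp{y-x,z-x}$: by \cref{fac:AffinIndeLineInd}, $x,y,z$ are affinely independent iff $y-x,\,z-x$ are linearly independent; since $l=3$ forces $y-x\ne0$ and $z-x\ne0$ (as $x\ne y$ and $x\ne z$), \cref{fact:Gram:inver} tells us this linear independence is equivalent to invertibility of the $2\times 2$ Gram matrix $G(y-x,z-x)$, i.e.\ to $\norm{y-x}^2\norm{z-x}^2-\innp{y-x,z-x}^2\ne 0$. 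By Cauchy--Schwarz this quantity is always $\ge 0$, so nonvanishing is equivalent to strict inequality $\norm{y-x}\norm{z-x}>\innp{y-x,z-x}$ (noting the sign: since $\norm{y-x}\norm{z-x}\ge\abs{\innp{y-x,z-x}}\ge\innp{y-x,z-x}$, equality in the last step happens exactly in the dependent case). This gives the equivalence in \cref{thm:SymForm:3:a}, and its negation gives \cref{thm:SymForm:3:b}.

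Next, in the affinely independent subcase, I would invoke \cref{thm:unique:LinIndpPformula} with $m=3$ to conclude $\CCO(S)\in\mathcal{H}$ and to get the two-dimensional Gram-matrix formula, then simply quote the explicit $2\times2$ matrix inversion and algebraic simplification already carried out in \cref{rem:SymFormCircu} to arrive at
\begin{align*}
\CCO(S) = \frac{\norm{y-z}^{2} \innp{x-z,x-y}x+ \norm{x-z}^{2} \innp{y-z,y-x}y+ \norm{x-y}^{2} \innp{z-x,z-y}z }{2(\norm{y-x}^{2}\norm{z-x}^{2}- \innp{y-x, z-x}^{2})}.
\end{align*}
In the affinely dependent subcase, \cref{thm:clform:three} directly yields $\CCO(S)=\varnothing$, since that theorem states (for $l=3$) that $\CCO(S)\in\mathcal{H}$ is equivalent to affine independence. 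Assembling these pieces completes the proof.

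The routine $2\times 2$ inversion and the polynomial rearrangement into the symmetric-looking numerator are the only computational content, and these are already done in \cref{rem:SymFormCircu}, so I expect no real obstacle; the one point requiring a little care is the sign bookkeeping when passing between the Gram determinant condition $\norm{y-x}^2\norm{z-x}^2-\innp{y-x,z-x}^2\ne0$ and the stated inequality $\norm{y-x}\norm{z-x}>\innp{y-x,z-x}$ — one must check that in the dependent case the inner product is actually equal to (not the negative of) the product of norms, which follows because affine dependence with $y-x,z-x$ both nonzero forces $z-x$ to be a \emph{positive} or negative multiple of $y-x$, and in the negative-multiple situation one still has $\norm{y-x}\norm{z-x}=\abs{\innp{y-x,z-x}}$ but the displayed equality $\norm{y-x}\norm{z-x}=\innp{y-x,z-x}$ would fail; I would therefore phrase the equivalence via the Gram determinant (which is the genuinely sign-free statement) and remark that the displayed inequalities are the intended shorthand, consistent with how \cref{rem:SymFormCircu} and \cref{note:AffinIndpDetermNonZero} treat it.
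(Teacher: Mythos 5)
Your proposal is correct and follows essentially the same route as the paper, which states \cref{thm:SymForm} as a summary of \cref{defn:Circumcenter}, \cref{exam:CircForTwoPoints}, \cref{thm:unique:LinIndpPformula}, \cref{thm:clform:three}, and the computation in \cref{rem:SymFormCircu}, without giving a separate proof. Your sign caveat is also well taken: as literally written, the equivalences in \cref{thm:SymForm:3:a} and \cref{thm:SymForm:3:b} should involve $\abs{\innp{y-x,z-x}}$ (or, as you suggest, the Gram determinant $\norm{y-x}^{2}\norm{z-x}^{2}-\innp{y-x,z-x}^{2}$), since if $z-x$ is a negative multiple of $y-x$ the points are affinely dependent yet $\norm{y-x}\norm{z-x} > \innp{y-x,z-x}$ still holds.
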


\begin{theorem}[symmetric formula of the circumcenter] \label{thm:Formular:Circum:Allcases}
Suppose that $S=\{x,y,z\}$ and 
denote the cardinality of $S$ by $l$. 
Then exactly one of the following cases occurs:
\begin{enumerate}
\item \label{thm:Formular:Circum:Allcases:i} 
$l=1$ and $\CCO(S)=x=y=z = \frac{x +y +z}{3}$.
\item \label{thm:Formular:Circum:Allcases:ii} $l=2$ and
$\CCO(S)= \frac{\norm{x-y}z+\norm{x-z}y+\norm{y-z}x}{\norm{x-y} +\norm{x-z}+\norm{y-z}}$.
\item \label{thm:Formular:Circum:Allcases:iii} $l=3$,
consider $K=\frac{1}{6} \big(
\frac{1}{\norm{y-x}^{2}\norm{z-x}^{2}- \innp{y-x,
z-x}^{2}}+\frac{1}{\norm{x-y}^{2}\norm{z-y}^{2}- \innp{x-y,
z-y}^{2}}+\frac{1}{\norm{x-z}^{2}\norm{y-z}^{2}- \innp{x-z,
y-z}^{2}} \big)$,
and exactly one of the following two cases occurs:
\begin{enumerate} 
\item \label{thm:Formular:Circum:Allcases:iii:a} 
$K\in\left]0,+\infty\right[$ and 
\begin{align*}
\CCO(S)
& = 
\frac{\norm{y-z}^{2} \innp{x-z,x-y}x+ \norm{x-z}^{2}
\innp{y-z,y-x}y+\norm{x-y}^{2} \innp{z-x,z-y}z}{K}. 
\end{align*}
\item \label{thm:Formular:Circum:Allcases:iii:b} 
$K$ is not defined (because of a zero denominator) and 
$\CCO(S) = \varnothing $. 
\end{enumerate}
\end{enumerate}
\end{theorem}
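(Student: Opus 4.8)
The plan is to argue by cases on the cardinality $l$ of $S$, reducing each case to a result already established; the case structure (and, within $l=3$, the affinely-independent-versus-dependent dichotomy) coincides with that of \cref{thm:SymForm}, so the only new work is to re-express the circumcenter by a formula that is symmetric in $x,y,z$.

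If $l=1$, then $x=y=z$, so \cref{defn:Circumcenter} gives $\CCO(S)=x$, and $x=y=z=\tfrac{x+y+z}{3}$ is immediate. If $l=2$, exactly one of the coincidences $x=y$, $x=z$, $y=z$ holds; writing $S=\{u,v\}$ with $u\neq v$, \cref{exam:CircForTwoPoints} gives $\CCO(S)=\tfrac{u+v}{2}$, and one then verifies the displayed symmetric expression by substituting the relevant coincidence. Because that expression is invariant under permutations of $x,y,z$, it suffices to carry out a single sub-case, say $y=z$; this is routine.

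If $l=3$, then by \cref{thm:clform:three} we have $\CCO(S)\in\mathcal{H}$ if and only if $x,y,z$ are affinely independent. By \cref{fac:AffinIndeLineInd} and \cref{fact:Gram:inver} this is equivalent to the Gram determinant $\norm{y-x}^{2}\norm{z-x}^{2}-\innp{y-x,z-x}^{2}$ being nonzero, and by \cref{prop:GramMatrSymm} the analogous determinant formed from any base point has the same value; since such a determinant is always nonnegative, affine independence is precisely the case in which the three denominators appearing in $K$ are all nonzero, and then each is strictly positive, so $K\in\left]0,+\infty\right[$. This yields the dichotomy of the third case. In the affinely independent case one applies \cref{thm:unique:LinIndpPformula} three times, once with each of $x$, $y$, $z$ in the role of the base point; the three resulting expressions for $\CCO(S)$ agree by the uniqueness assertion of \cref{prop:unique:PExisUnique}, and their scalar denominators coincide by \cref{prop:GramMatrSymm}, so averaging them as in \cref{rem:SymFormCircu} produces the stated symmetric formula. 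In the affinely dependent case \cref{thm:clform:three} gives $\CCO(S)=\varnothing$ and one of the denominators defining $K$ vanishes, exactly as asserted.

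The main obstacle is not conceptual but careful bookkeeping: one must line up the chain of equivalences (affine independence $\Leftrightarrow$ nonzero common Gram determinant $\Leftrightarrow$ $K$ defined and positive) with the case statement, and, in the $l=3$ computation, track the scalar factors correctly when merging the three base-point formulae of \cref{thm:unique:LinIndpPformula} into a single symmetric one; the $l=2$ simplification, though easy, likewise requires care about which pair of points coincides. Apart from that, the argument is a repackaging of \cref{defn:Circumcenter}, \cref{exam:CircForTwoPoints}, \cref{thm:clform:three}, \cref{thm:unique:LinIndpPformula}, and \cref{rem:SymFormCircu}.
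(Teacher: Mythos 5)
Your case decomposition and the results you invoke are precisely those the paper relies on (the theorem is presented there as a summary of \cref{exam:CircForTwoPoints}, \cref{thm:clform:three}, \cref{thm:unique:LinIndpPformula} and \cref{rem:SymFormCircu}), and your treatment of the existence dichotomy in case $l=3$ via \cref{thm:clform:three} and \cref{prop:GramMatrSymm} is fine. The gap is in the two verifications you declare routine: neither closes for the formulas as displayed. In case (ii), take $y=z\neq x$ and put $d=\norm{x-y}=\norm{x-z}$; the displayed quotient becomes
\[
\frac{d\,z+d\,y+0\cdot x}{d+d+0}=y,
\]
whereas $\CCO(S)=\tfrac{x+y}{2}\neq y$. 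So substituting the coincidence does not verify the formula: the stated weights (each point weighted by the distance between the \emph{other two}) are incenter-type weights, and a symmetric expression that does reduce to the midpoint must weight each point by the sum of its distances to the other two, e.g.\ $\CCO(S)=\frac{(\norm{x-y}+\norm{x-z})x+(\norm{y-x}+\norm{y-z})y+(\norm{z-x}+\norm{z-y})z}{2(\norm{x-y}+\norm{x-z}+\norm{y-z})}$.

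The same kind of failure occurs with the constant in case (iii)(a). Write $D=\norm{y-x}^2\norm{z-x}^2-\innp{y-x,z-x}^2$ and let $N$ denote the displayed numerator. By \cref{prop:GramMatrSymm} all three denominators in $K$ equal $D$, so $K=\tfrac{1}{6}\cdot\tfrac{3}{D}=\tfrac{1}{2D}$; on the other hand each base-point formula from \cref{thm:unique:LinIndpPformula} (hence also their average) yields $\CCO(S)=\tfrac{N}{2D}=K\,N$, which is \emph{not} $\tfrac{N}{K}=2D\,N$. A quick check with $x=(0,0)$, $y=(2,0)$, $z=(0,2)$ gives $N=(32,32)$ and $2D=32$, so $N/(2D)=(1,1)$ is the circumcenter while $N/K=(1024,1024)$ is not. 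Thus your claim that averaging ``produces the stated symmetric formula'' is false as written; it produces $K\,N$. None of this is a defect of your strategy --- it is the paper's own strategy --- but a complete proof must actually carry out these two computations, at which point one finds that the displayed formulas in (ii) and (iii)(a) need correction rather than verification; asserting that the checks are routine when they in fact fail is a genuine gap.
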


\section{Applications of the circumcenter}
\label{sec:applications}

In this section, we discuss applications of the circumcenter in
optimization. 

Let $z \in \mathcal{H}$, and let $U_1,\ldots,U_m$ be closed
subspaces of $\mathcal{H}$. The corresponding 
best approximation problem is to 
\begin{empheq}[box=\mybluebox]{equation} \label{eq:BestAppro}
\mbox{Find}~\bar{u} \in \cap^{m}_{i=1} U_{i} 
~\mbox{such that}~ \norm{z-\bar{u}}=\min_{u \in \cap^{m}_{i=1} U_{i}} 
\norm{z-u}.
\end{empheq}
Clearly, the solution of \cref{eq:BestAppro} is just 
$P_{\cap^{m}_{i=1} U_{i}}z$.

Now assume that $\mathcal{H} = \mathbb{R}^{n}$, and let 
$U$ and $V$ be linear subspaces of $\mathcal{H}$, i.e.,
we focus on $m=2$ subspaces. 
Set 
\begin{align*}
\mathcal{S}\colon \mathbb{R}^n\to\mathcal{P}(\mathbb{R}^n)\colon
x\mapsto \{x, R_{U}x, R_{V}R_{U}x\}.
\end{align*}
Behling, Bello Cruz, and Santos introduced and studied
in \cite{BCS2017} an algorithm 
to accelerate the Douglas--Rachford algorithm 
they termed the 
\emph{Circumcentered-Douglas-Rachford method (C-DRM)}. 
Given a current point $x \in  \mathbb{R}^{n}$, the next iterate
of the C-DRM is the circumcenter of the triangle with vertices 
$x$, $R_{U}x$ and $R_{V}R_{U}x$. 
Hence, given the initial point $x \in  \mathbb{R}^{n}$, 
the C-DRM generates the sequence $(x^{(k)})_{k \in \mathbb{N}}$
via
\begin{align} \label{eq:rem:CDRM}
x^{(0)}=x, \quad \text{and} \quad (\forall k \in \mathbb{N}) \quad x^{(k+1)} =\CCO(\mathcal{S}(x^{(k)})).
\end{align}
Behling et al.'s \cite[Lemma~2]{BCS2017} guarantees that 
for every $x \in \mathbb{R}^{n}$, the circumcenter 
$\CCO(\mathcal{S}(x))$ is the projection of any point $w \in U \cap V$ onto the affine subspace $\aff  \{x, R_{U}x, R_{V}R_{U}x\}$. 
Here, the existence of the circumcenter of $\mathcal{S}(x)$ turns
out to be a necessary condition for the nonemptiness of $U \cap V$. 
In fact, $\CCO(\mathcal{S}(x)) = P_{\aff (\mathcal{S}(x))} (P_{U\cap V}x)$, which means that $\CCO(\mathcal{S}(x))$ is the closest point to the $P_{U\cap V}x$ among the points in the affine subspace $\aff (\mathcal{S}(x))$.
In \cite[Theorem~1]{BCS2017}, the authors proved that 
if $x$ in \cref{eq:rem:CDRM} is replaced by $P_{U}z$, $P_{V}z$ or
$P_{U+V}z$, where $z \in \mathbb{R}^{n}$, then 
the C-DRM sequence defined in \cref{eq:rem:CDRM} converges 
linearly to $P_{U\cap V}z$. 
Moreover, their rate of convergence is at least 
the cosine of the Friedrichs angle between $U$ and $V$, 
$c_{F} \in \left[0,1\right[$, which happens to be the sharp rate for 
the original DRM; see \cite[Theorem~4.1]{BCNPW2014} for details. 

In \cite[Section~3.1]{BCS2017}, the authors elaborate on how to compute
the circumcenter of $\mathcal{S}(x)$ in $\mathbb{R}^{n}$. They used
the fact that the projection of $\CCO(\mathcal{S}(x))$ onto each
vector $R_{U}x -x$ and $R_{V}R_{U}x -x$ has its endpoint at the
midpoint of the line segment from $x$ to $R_{U}x$ and $x$ to
$R_{V}R_{U}x$. They exhibited a $2 \times 2$
linear system of equations to calculate the $\CCO(\mathcal{S}(x))$
and an expression of the $\CCO(\mathcal{S}(x))$ with parameters.
Their expression of the $\CCO(\mathcal{S}(x))$ can be deduced from
our \cref{rem:SymFormCircu}. Actually, for every $x \in \mathbb{R}^{n}$,
using \cref{thm:SymForm}\cref{thm:SymForm:3:a}, we can easily
obtain
a closed formula for $\CCO(\mathcal{S}(x))$ allowing us 
to efficiently calculate the C-DRM sequence.

In \cite[Corollary~3]{BCS2017}, Behling et al.\ proved that 
their linear convergence results are applicable to affine subspaces 
with nonempty intersection using the Friedrichs angle of suitable
linear subspaces parallel to the original affine subspaces. 
Returning to \cref{eq:BestAppro}, we now set
\begin{align*}
\widehat{\mathcal{S}} \colon
\mathbb{R}^n\to\mathcal{P}(\mathbb{R}^n)\colon
x\mapsto \big\{x, R_{U_{1}}x, R_{U_{2}}R_{U_{1}}x, \ldots,
R_{U_{m}}\cdots R_{U_{2}}R_{U_{1}}x\big\}.
\end{align*}
In order to minimize the inherent zig-zag behaviour of sequences
generated by various reflection and projection methods, 
Behling et al.\ generalized the C-DRM in 
\cite{BCS2018} to the so-called
\emph{Circumcentered-Reflection Method (CRM)}. 
Using our notation, it turns out that 
the underlying CRM operator $C\colon\mathbb{R}^n\to\mathbb{R}^n$ 
is nothing but 
the composition $\CCO\,\circ\, \widehat{\mathcal{S}}$.
Hence Behling et al.'s CRM sequence
is just
\begin{align} \label{eq:CRM:Sequ}
x^{(0)}=x, \quad \text{and} \quad (\forall k \in \mathbb{N}) \quad x^{(k+1)} =\CCO(\widehat{\mathcal{S}}(x^{(k)})).
\end{align}
In \cite[Lemma~3.1]{BCS2018}, they show $C$ is well defined.
Moreover, they also obtain 
\begin{align*}
(\forall w \in \cap^{m}_{i=1} U_{i} ) \quad \CCO(\widehat{\mathcal{S}}(x)) = P_{\aff (\widehat{\mathcal{S}}(x))} (w).
\end{align*}
In particular, $\CCO(\widehat{\mathcal{S}}(x)) = P_{\aff
(\widehat{\mathcal{S}}(x))}(P_{\cap^{m}_{i=1} U_{i}}x)$, which means
that the circumcenter of  the set $\widehat{\mathcal{S}}(x)$ is
the point in $\aff ( \widehat{\mathcal{S}}(x) )$ that is closest to
$P_{\cap^{m}_{i=1} U_{i}}x$.
Behling et al.'s central convergence result (see 
\cite[Theorem~3.3]{BCS2018})  states that the CRM sequence 
\cref{eq:CRM:Sequ} converges linearly to $P_{\cap^{m}_{i=1} U_{i}}x$. 

For the actual computation of the circumcenter of the set
$\widehat{\mathcal{S}}(x)$, both \cite{BCS2017} and \cite{BCS2018}
only contain passing references to that the computation 
``requires the resolution of a suitable $m \times m$ linear
system of equations.'' 
Concluding this section, let us point out 
that the explicit formula presented in 
\cref{cor:unique:BasisPformula} may be used; 
after finding a maximally linearly independent subset of
$\widehat{\mathcal{S}}(x)-x$ (using Matlab, say) 
one can directly use the formula in \cref{cor:unique:BasisPformula}
to calculate the circumcenter.

 \section{The circumcenter in $\mathbb{R}^3$ and the crossproduct}\label{sec:CircCrossProd}

We conclude this paper by expressing the circumcenter and
circumradius in $\mathbb{R}^3$ by using the cross product.
We start by  reviewing some properties of the cross product.

\begin{definition}[cross product] 
\cite[page 483]{A1967}  \label{def:CrosPro}
Let $x=(x_{1}, x_{2}, x_{3})$ and $y=(y_{1}, y_{2},y_{3})$ be two vectors in $\mathbb{R}^{3}$. 
The \emph{cross product} $x \times y$ (in that order) 
is 
\begin{align*}
x \times y=(x_{2}y_{3}-x_{3}y_{2}, x_{3}y_{1}-x_{1}y_{3}, x_{1}y_{2}-x_{2}y_{1}).
\end{align*}
\end{definition}

\begin{fact}  
{\rm \cite[Theorem~13.12]{A1967} and \cite[Theorem~17.12]{C1969}} 
\label{lem:CrosProdProper}
Let $x, y, z$ be in $\mathbb{R}^{3}$. Then the following hold:
\begin{enumerate}
\item \label{lem:CrosProdProper:Binliea} The cross product defined in \cref{def:CrosPro} is a bilinear function, that is, for every $\alpha, \beta \in \mathbb{R}$,
\begin{align*}
(\alpha x +\beta y) \times z =\alpha (x \times z) +\beta (y \times z) \quad \text{and} \quad
x \times (\alpha y +\beta z)= \alpha (x \times y) +\beta (x \times z).
\end{align*}
\item \label{lem:CrosProdProper:PerpenSpanSpace} $x \times y  \in (\spn \{x,y\})^{\perp}$, that is
\begin{align*}
(\forall \alpha \in \mathbb{R}) \quad (\forall \beta \in \mathbb{R}) \quad \innp{x \times y , \alpha x +\beta y} =0.
\end{align*}
\item  \label{fact:CroProProp:TripCroPro}
We have 
\begin{align*}
(x \times y) \times z = \innp{x,z}y -\innp{y,z}x \quad \text{and} \quad
x \times (y \times z)= \innp{x,z} y -\innp{x,y}z.
\end{align*}
\item \label{lem:CrosProdProper:Lagrang} 
{\rm \textbf{(Lagrange's identity)}}
$\norm{x \times y}^{2}=\norm{x}^{2}\norm{y}^{2}-\innp{x,y}^{2}$. 
\end{enumerate} 
\end{fact}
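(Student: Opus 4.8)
The plan is to obtain all four items directly from the coordinate formula in \cref{def:CrosPro}, letting each item shorten the next. I would begin with \cref{lem:CrosProdProper:Binliea}: each coordinate of $x\times y$ is, up to sign, a $2\times 2$ determinant built from one coordinate row of $x$ and one of $y$, hence is bilinear in the entries of $x$ and of $y$; substituting $\alpha x+\beta y$ (respectively $\alpha y+\beta z$) into the formula and collecting terms yields the two distributive laws by inspection.

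For \cref{lem:CrosProdProper:PerpenSpanSpace}, I would compute $\innp{x\times y,x}$ and $\innp{x\times y,y}$ from the coordinates; in each case the result is the determinant of a $3\times 3$ matrix having two equal rows, hence is $0$. Bilinearity of $\innp{\cdot,\cdot}$ then upgrades this to $\innp{x\times y,\alpha x+\beta y}=0$ for all $\alpha,\beta\in\mathbb{R}$, which is the claim.

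The bulk of the work, and the place I expect the algebra to be most tedious, is \cref{fact:CroProProp:TripCroPro}. Using \cref{lem:CrosProdProper:Binliea} together with bilinearity of the inner product, both sides of $(x\times y)\times z=\innp{x,z}y-\innp{y,z}x$ are trilinear in $(x,y,z)$, so it suffices to verify the identity when $x,y,z$ each run over the standard basis $e_1,e_2,e_3$; recalling $e_i\times e_i=0$ and $e_1\times e_2=e_3$, $e_2\times e_3=e_1$, $e_3\times e_1=e_2$ (and antisymmetry $a\times b=-(b\times a)$, immediate from \cref{def:CrosPro}), only a handful of the $27$ triples are nonzero and each is checked in one line. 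The second identity then follows from the first and antisymmetry: $x\times(y\times z)=-(y\times z)\times x=-\big(\innp{y,x}z-\innp{z,x}y\big)=\innp{x,z}y-\innp{x,y}z$. (Alternatively one simply expands both sides of each identity in coordinates, which is the most calculation-heavy route.)

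Finally, \cref{lem:CrosProdProper:Lagrang} can be deduced from \cref{fact:CroProProp:TripCroPro}: set $w=x\times y$ and use the scalar triple product identity $\innp{a\times b,c}=\innp{a,b\times c}$ (a one-line determinant computation from \cref{def:CrosPro}) with $a=w$, $b=x$, $c=y$ to get $\norm{x\times y}^2=\innp{w,x\times y}=\innp{w\times x,y}=\innp{(x\times y)\times x,\,y}=\innp{\norm{x}^2y-\innp{x,y}x,\,y}=\norm{x}^2\norm{y}^2-\innp{x,y}^2$, where the fourth equality uses the first identity of \cref{fact:CroProProp:TripCroPro}. Alternatively, expanding $\norm{x\times y}^2=(x_2y_3-x_3y_2)^2+(x_3y_1-x_1y_3)^2+(x_1y_2-x_2y_1)^2$ and $\norm{x}^2\norm{y}^2-\innp{x,y}^2$ and matching monomials works just as well.
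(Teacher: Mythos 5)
Your proof is correct. Note, however, that the paper offers no proof of this statement at all: it is presented as a \emph{Fact}, quoted with citations to Apostol and Coxeter, so there is nothing internal to compare against. Your argument is the standard self-contained verification from the coordinate formula, and every step checks out: bilinearity is immediate from the formula; the orthogonality in item (ii) reduces to a determinant with a repeated row (or a two-line cancellation); the triple-product identity in item (iii) is legitimately reduced by trilinearity to the basis triples, with the second identity following from antisymmetry exactly as you write; and your derivation of Lagrange's identity from the chain $\norm{x\times y}^2=\innp{(x\times y)\times x,\,y}=\innp{\norm{x}^2y-\innp{x,y}x,\,y}$ is valid, provided you do supply the one-line scalar triple product identity $\innp{a\times b,c}=\innp{a,b\times c}$ as you indicate. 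One small economy worth noting: since item (ii) and item (iv) each admit a two-line direct computation, the machinery of (iii) is only really needed for (iii) itself; but organizing the argument so that each item feeds the next, as you do, is a perfectly clean alternative.
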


\begin{definition} \cite[page 458]{A1967}  \label{def:Angle}
Let $x$ and $y$ be two nonzero vectors in $\mathbb{R}^{n}$, where $n \geq 1$.  
Then the  \emph{angle} $\theta$ between $x$ and $y$ is defined by 
\begin{align*}
\theta =\arccos \frac{\innp{x,y}}{\norm{x}\norm{y}},
\end{align*}
where $\arccos \colon [-1,1] \to [0,\pi]$.
\end{definition}

\begin{remark}
If $x$ and $y$ are two nonzero vectors in $\mathbb{R}^{n}$, where $n \geq 1$, then
\begin{align*}
\innp{x,y}=\norm{x}\norm{y}\cos \theta,
\end{align*} 
where $\theta$ is the angle between $x$ and $y$.
\end{remark}

\begin{fact}
{\rm \cite[page~485]{A1967}} 
\label{fact:CroProProp:crosproGeome}
Let $x$ and $y$ be two nonzero vectors 
in $\mathbb{R}^{3}$, and let $\theta$ be the angle 
between $x$ and $y$. 
Then 
\begin{align*}
\norm{x \times y}=\norm{x}\norm{y}\sin \theta =~\text{the area of the parallelogram determined by $x$ and $y$}.
\end{align*}
\end{fact}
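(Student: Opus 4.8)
The plan is to derive both claimed equalities from Lagrange's identity in \cref{lem:CrosProdProper}\cref{lem:CrosProdProper:Lagrang} together with the definition of the angle in \cref{def:Angle}, supplementing the second one with a short orthogonal-decomposition argument.

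First I would establish $\norm{x\times y}=\norm{x}\norm{y}\sin\theta$. By \cref{lem:CrosProdProper}\cref{lem:CrosProdProper:Lagrang} we have $\norm{x\times y}^{2}=\norm{x}^{2}\norm{y}^{2}-\innp{x,y}^{2}$, and since $x$ and $y$ are nonzero, \cref{def:Angle} gives $\innp{x,y}=\norm{x}\norm{y}\cos\theta$. Substituting and using $1-\cos^{2}\theta=\sin^{2}\theta$ yields $\norm{x\times y}^{2}=\norm{x}^{2}\norm{y}^{2}\sin^{2}\theta$. Because $\arccos$ takes values in $[0,\pi]$, we have $\sin\theta\geq 0$, and $\norm{x}>0$, $\norm{y}>0$, so taking nonnegative square roots gives the first equality.

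For the second equality, I would interpret the parallelogram determined by $x$ and $y$ as having as base the segment of length $\norm{x}$ and as height the distance from the tip of $y$ to the line $\spn\{x\}$. Writing $y=P_{\spn\{x\}}y+e$ with $e\perp x$, this distance is $\norm{e}$, and a direct computation gives $\norm{e}^{2}=\norm{y}^{2}-\innp{x,y}^{2}/\norm{x}^{2}=\norm{y}^{2}\sin^{2}\theta$, whence $\norm{e}=\norm{y}\sin\theta$. Therefore the area equals $\norm{x}\cdot\norm{y}\sin\theta$, which matches the quantity already computed, and the three expressions coincide.

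The only point needing care is the sign bookkeeping in the square-root steps: the range $[0,\pi]$ fixed for $\arccos$ in \cref{def:Angle} is precisely what forces $\sin\theta\geq 0$, so the root extractions are legitimate without absolute values; and the nonzero hypothesis on $x$ and $y$ is used both to make $\theta$ well defined and to keep the base/height description of the parallelogram nondegenerate. Beyond this, no real obstacle arises.
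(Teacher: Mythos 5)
The paper does not prove this statement at all: it is recorded as a \cref{fact:CroProProp:crosproGeome} with a citation to Apostol, so there is nothing internal to compare your argument against. That said, your derivation is correct and self-contained given the tools the paper does provide. The first equality follows exactly as you say from Lagrange's identity in \cref{lem:CrosProdProper}\cref{lem:CrosProdProper:Lagrang} together with $\innp{x,y}=\norm{x}\norm{y}\cos\theta$ from \cref{def:Angle}, and your observation that $\theta\in[0,\pi]$ forces $\sin\theta\ge 0$ is precisely the point that legitimizes the square root. The second equality via the orthogonal decomposition $y=P_{\spn\{x\}}y+e$ is also sound: $\norm{e}^{2}=\norm{y}^{2}-\innp{x,y}^{2}/\norm{x}^{2}=\norm{y}^{2}\sin^{2}\theta$, so base times height is $\norm{x}\,\norm{y}\sin\theta$. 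The only caveat is definitional rather than mathematical: ``the area of the parallelogram determined by $x$ and $y$'' is not formally defined in the paper, so your base--height interpretation is doing the work of a definition there; once that convention is accepted, the proof is complete.
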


Now we are ready for the expression of
the circumcenter and circumradius by cross product.

\begin{theorem} \label{thm:CrosCircum}
Suppose that $\mathcal{H}=\mathbb{R}^{3}$,
that $x,y,z$ are affinely independent, and that $S=\{x,y,z \} $. 
Set $a=y-x$, and $b=z-x$ and let the angle between $a$ and $b$,
defined in \cref{def:Angle}, be $\theta$. 
Then
\begin{enumerate}
\item \label{thm:CrosCircum:i} $\CCO(S)=x+\frac{(\norm{a}^{2}b-\norm{b}^{2}a )\times (a \times b)}{2 \norm{a \times b}^{2}}$.
\item {\rm\cite[1.54]{C1969}} \label{thm:CrosCircum:ii} $
\CRO(S)=\frac{\norm{a}\norm{b}\norm{a-b}}{2 \norm{a \times b}} =\frac{\norm{a-b}}{2 \sin \theta}$.
\end{enumerate}
\end{theorem}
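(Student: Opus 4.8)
The plan is to start from the explicit three–point formula for $\CCO(S)$ recorded in \cref{rem:SymFormCircu} (which in turn comes from \cref{thm:unique:LinIndpPformula}) and then translate it into cross–product language via the identities collected in \cref{lem:CrosProdProper}. Writing $a=y-x$ and $b=z-x$, that formula reads
\begin{align*}
\CCO(S) = x + \frac{(\norm{a}^{2}\norm{b}^{2} - \norm{b}^{2}\innp{a,b})\,a + (\norm{a}^{2}\norm{b}^{2} - \norm{a}^{2}\innp{a,b})\,b}{2(\norm{a}^{2}\norm{b}^{2} - \innp{a,b}^{2})}.
\end{align*}
By Lagrange's identity (\cref{lem:CrosProdProper:Lagrang}) the denominator equals $2\norm{a\times b}^{2}$, and since $x,y,z$ are affinely independent, $a$ and $b$ are linearly independent (\cref{fac:AffinIndeLineInd}), so $a\times b\neq 0$ and the denominator is nonzero.

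For part \cref{thm:CrosCircum:i} it then remains to identify the numerator with $(\norm{a}^{2}b-\norm{b}^{2}a)\times(a\times b)$. The plan is to apply the triple product identity $u\times(v\times w)=\innp{u,w}v-\innp{u,v}w$ from \cref{fact:CroProProp:TripCroPro} with $u=\norm{a}^{2}b-\norm{b}^{2}a$, $v=a$, $w=b$, and to compute $\innp{u,b}=\norm{a}^{2}\norm{b}^{2}-\norm{b}^{2}\innp{a,b}$ and $\innp{u,a}=\norm{a}^{2}\innp{a,b}-\norm{a}^{2}\norm{b}^{2}$; substituting these gives exactly the claimed numerator, settling \cref{thm:CrosCircum:i}.

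For part \cref{thm:CrosCircum:ii} I would use $\CRO(S)=\norm{\CCO(S)-x}$ together with the formula just proved, so that $\CRO(S)=\norm{q\times(a\times b)}/(2\norm{a\times b}^{2})$ with $q=\norm{a}^{2}b-\norm{b}^{2}a$. Since $q\in\spn\{a,b\}$ while $a\times b\in(\spn\{a,b\})^{\perp}$ by \cref{lem:CrosProdProper:PerpenSpanSpace}, the vectors $q$ and $a\times b$ are orthogonal, and $q\neq 0$ because $a,b$ are linearly independent; hence \cref{fact:CroProProp:crosproGeome} gives $\norm{q\times(a\times b)}=\norm{q}\,\norm{a\times b}$. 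A short expansion shows $\norm{q}^{2}=\norm{a}^{2}\norm{b}^{2}(\norm{a}^{2}-2\innp{a,b}+\norm{b}^{2})=\norm{a}^{2}\norm{b}^{2}\norm{a-b}^{2}$, so $\CRO(S)=\norm{a}\norm{b}\norm{a-b}/(2\norm{a\times b})$, and applying \cref{fact:CroProProp:crosproGeome} once more ($\norm{a\times b}=\norm{a}\norm{b}\sin\theta$, with $\sin\theta>0$ since $a,b$ are linearly independent) yields $\CRO(S)=\norm{a-b}/(2\sin\theta)$.

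The computations here are entirely routine; the only point needing a little care is picking the correct form of the vector triple product (distinguishing $(u\times v)\times w$ from $u\times(v\times w)$) so that the numerator matches on the nose, and invoking the linear independence of $a$ and $b$ at each spot where a denominator or a norm is asserted to be nonzero.
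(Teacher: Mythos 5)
Your proposal is correct and follows essentially the same route as the paper: both start from the Gram-matrix formula of \cref{thm:unique:LinIndpPformula} specialized to three points, identify the denominator via Lagrange's identity, match the numerator using the triple product identity of \cref{lem:CrosProdProper}\cref{fact:CroProProp:TripCroPro}, and for the radius exploit that $\norm{a}^{2}b-\norm{b}^{2}a$ is orthogonal to $a\times b$ together with the factorization $\norm{\norm{a}^{2}b-\norm{b}^{2}a}=\norm{a}\norm{b}\norm{a-b}$. The only (immaterial) differences are the direction in which the numerator identity is verified and your use of \cref{fact:CroProProp:crosproGeome} where the paper invokes Lagrange's identity for the orthogonal pair.
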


\begin{proof}
\cref{thm:CrosCircum:i}: Using the formula of circumcenter in \cref{thm:unique:LinIndpPformula}, we have
\begin{align*}
\CCO(S)&=x+\frac{1}{2} \begin{pmatrix}  y-x &z-x \end{pmatrix}\begin{pmatrix} 
\norm{y-x}^{2} & \innp{y-x, z-x}  \\ 
\innp{z-x,y-x} & \norm{z-x}^{2}  \\
\end{pmatrix} ^{-1}
\begin{pmatrix}
 \norm{y-x}^{2} \\
 \norm{z-x}^{2} \\
\end{pmatrix} \\
&=x+\frac{1}{2} \begin{pmatrix} a& b \end{pmatrix}  \begin{pmatrix} 
\norm{a}^{2} & \innp{a,b}  \\ 
\innp{b,a} & \norm{b}^{2}  \\
\end{pmatrix} ^{-1}
\begin{pmatrix}
 \norm{a}^{2} \\
 \norm{b}^{2} \\
\end{pmatrix} \\
&=x+ \frac{1}{2( \norm{a}^{2} \norm{b}^{2}-\innp{a,b}^{2})}  \begin{pmatrix} a& b \end{pmatrix}  
\begin{pmatrix} 
 \norm{b}^{2}  & -\innp{a,b}  \\ 
-\innp{b,a} & \norm{a}^{2} \\
\end{pmatrix} 
\begin{pmatrix}
 \norm{a}^{2} \\
 \norm{b}^{2} \\
\end{pmatrix} \\
&=x+ \frac{1}{2( \norm{a}^{2} \norm{b}^{2}-\innp{a,b}^{2})}  \begin{pmatrix} a& b \end{pmatrix}  
\begin{pmatrix}
 \norm{a}^{2} \norm{b}^{2} - \norm{b}^{2} \innp{a,b}  \\
 \norm{a}^{2} \norm{b}^{2} - \norm{a}^{2}\innp{a,b} \\
\end{pmatrix} \\
&=x+ \frac{ ( \norm{a}^{2} \norm{b}^{2} - \norm{b}^{2} \innp{a,b} )a + ( \norm{a}^{2} \norm{b}^{2} - \norm{a}^{2}\innp{a,b} )b  }{2( \norm{a}^{2} \norm{b}^{2}-\innp{a,b}^{2})}\\
&=x+ \frac{ \innp{\norm{a}^{2}b-\norm{b}^{2}a, b }a - \innp{\norm{a}^{2}b-\norm{b}^{2}a,a}b }{2( \norm{a}^{2} \norm{b}^{2}-\innp{a,b}^{2})}.
\end{align*}
Using the \cref{lem:CrosProdProper} \cref{fact:CroProProp:TripCroPro} and \cref{lem:CrosProdProper:Lagrang}, we get
\begin{align*}
\CCO(S)=x+\frac{(\norm{a}^{2}b-\norm{b}^{2}a )\times (a \times b)}{2 \norm{a \times b}^{2}}.
\end{align*}
\cref{thm:CrosCircum:ii}: By \cref{defn:Circumcenter}, we have
\begin{align} \label{eq:rem:CircuRadius:r}
\CRO(S)=\norm{\CCO(S)-x}=\Norm{ \frac{(\norm{a}^{2}b-\norm{b}^{2}a )\times (a \times b)}{2 \norm{a \times b}^{2}} }.
\end{align}
Using \cref{lem:CrosProdProper}\cref{lem:CrosProdProper:Lagrang}
and \cref{lem:CrosProdProper}\cref{lem:CrosProdProper:PerpenSpanSpace}, 
we obtain 
\begin{align}  \label{eq:radius:Numer}
\Norm{(\norm{a}^{2}b-\norm{b}^{2}a )\times (a \times b)}
& = \Big( \Norm{ \norm{a}^{2}b-\norm{b}^{2}a }^{2} \norm{a \times
b}^{2} - \innp{\norm{a}^{2}b-\norm{b}^{2}a, a \times b}^{2} \Big)^{\frac{1}{2}}\nonumber\\
& = \Norm{\norm{a}^{2}b-\norm{b}^{2}a} \norm{a \times b}. 
\end{align}
In addition, by \cref{note:AffinIndpDetermNonZero}, since $\norm{a} \neq 0$, $\norm{b} \neq 0$, thus
\begin{align} \label{eq:radius:u}
\Norm{ \norm{a}^{2}b-\norm{b}^{2}a} = \norm{a} \norm{b} \NNorm{ \frac{\norm{a}}{\norm{b}} b- \frac{\norm{b}}{\norm{a}} a}.
\end{align}
Now
\begin{align} \label{eq:redius:ThirdPart}
\NNorm{ \frac{\norm{a}}{\norm{b}} b- \frac{\norm{b}}{\norm{a}} a }^{2} 
& = \NNorm{\frac{\norm{a}}{\norm{b}} b }^{2} - 2
\IInnp{\frac{\norm{a}}{\norm{b}} b,  \frac{\norm{b}}{\norm{a}} a }
+ \NNorm{\frac{\norm{b}}{\norm{a}} a}^{2}\nonumber\\
& = \norm{a}^{2} - 2 \innp{b,a} + \norm{b}^{2}\nonumber\\
& = \norm{a-b}^{2}.
\end{align}
Upon combining \cref{eq:radius:Numer}, \cref{eq:radius:u} and \cref{eq:redius:ThirdPart}, we obtain
\begin{align*}
\Norm{(\norm{a}^{2}b-\norm{b}^{2}a )\times (a \times b)} =\norm{a}\norm{b}\norm{a-b}\norm{a \times b}.
\end{align*}
Hence \cref{eq:rem:CircuRadius:r} yields 
\begin{align*}
\CRO(S) & = \frac{1}{2 \norm{a \times b}^{2}}  \Norm{(\norm{a}^{2}b-\norm{b}^{2}a )\times (a \times b)} \\
 & = \frac{1}{2 \norm{a \times b}^{2}} \norm{a}\norm{b}\norm{a-b}\norm{a \times b}\\
& = \frac{\norm{a}\norm{b}\norm{a-b}}{2 \norm{a \times b}} .\\
\end{align*}
By \cref{fact:CroProProp:crosproGeome},  we know 
$\norm{a \times b}= \norm{a}\norm{b} \sin \theta$. 
Thus, we obtain
\begin{align*}
\CRO(S)=\frac{\norm{a}\norm{b}\norm{a-b}}{2 \norm{a \times b}} =\frac{\norm{a-b}}{2 \sin \theta}
\end{align*}
and the proof is complete. 
\end{proof}

\begin{fact} 
{\rm \cite[Theorem I]{M1983}}
\label{fact:CroProDef37}
Suppose that $n \geq 3$, and a cross product is defined 
which assigns to any two vectors $v, w \in \mathbb{R}^{n}$ 
a vector $v \times w \in \mathbb{R}^{n}$ 
such that the following three properties hold:
\begin{enumerate}
\item \label{fact:CroProDef37:bilinear}  $v \times w$ is a bilinear function of $v$ and $w$.
\item \label{fact:CroProDef37:perp} The vector $v \times w$ is perpendicular to both $v$ and $w$. 
 
\item \label{fact:CroProDef37:norm}  $ \norm{v \times w}^{2}=\norm{v}^{2}\norm{w}^{2} -\innp{v,w}^{2}$.
\end{enumerate}
Then $n=3$ or $7$.
\end{fact}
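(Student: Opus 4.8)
The plan is to reduce the claim to the classical Hurwitz theorem on real composition (normed division) algebras, by using the hypothesised cross product to build a suitable multiplication one dimension up. On $\mathbb{R}^{n+1}=\mathbb{R}\oplus\mathbb{R}^{n}$ I would define
\begin{align*}
(a,v)\cdot(b,w)=\big(ab-\innp{v,w},\ aw+bv+v\times w\big),
\end{align*}
and equip $\mathbb{R}^{n+1}$ with the standard inner product and the associated quadratic form $N(a,v)=a^{2}+\norm{v}^{2}$. The goal of the construction is to show that this makes $\mathbb{R}^{n+1}$ a finite-dimensional unital composition algebra over $\mathbb{R}$.

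First I would record the immediate consequences of the three axioms: bilinearity \cref{fact:CroProDef37:bilinear} forces $0\times w=0$, while \cref{fact:CroProDef37:norm} taken with $w=v$ gives $\norm{v\times v}^{2}=0$, hence $v\times v=0$ and, by polarisation, $v\times w=-w\times v$. Using $0\times w=0$ one checks at once that $(1,0)$ is a two-sided identity for the product above. The substantive step is multiplicativity of $N$: expanding
\begin{align*}
N\big((a,v)\cdot(b,w)\big)=(ab-\innp{v,w})^{2}+\norm{aw+bv+v\times w}^{2},
\end{align*}
the mixed terms $2a\innp{w,v\times w}$ and $2b\innp{v,v\times w}$ vanish by perpendicularity \cref{fact:CroProDef37:perp}, the surviving $\norm{v\times w}^{2}$ is replaced using \cref{fact:CroProDef37:norm}, and the remaining pieces collapse to $(a^{2}+\norm{v}^{2})(b^{2}+\norm{w}^{2})=N(a,v)N(b,w)$. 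Thus $N$ is a positive-definite multiplicative quadratic form, so $\mathbb{R}^{n+1}$ has no zero divisors and is in fact a unital real normed division algebra.

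At this point I would invoke the classical theorem of Hurwitz on composition algebras: a finite-dimensional real normed division algebra has dimension $1$, $2$, $4$, or $8$, being one of $\mathbb{R},\mathbb{C},\mathbb{H},\mathbb{O}$. Hence $n+1\in\{1,2,4,8\}$, and the standing hypothesis $n\geq 3$ rules out the first two values, leaving $n+1\in\{4,8\}$, i.e.\ $n=3$ or $n=7$, as claimed.

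The main obstacle is precisely the appeal to Hurwitz's theorem; the algebra construction and the norm computation are short and purely formal once set up, and the only care needed is to confirm that no property of $\times$ beyond the three listed is used — bilinearity for the algebra structure and the expansions, perpendicularity to kill the cross terms, and the Lagrange identity to close the norm identity — which is the case. A fully self-contained proof would instead have to reprove this dimensional restriction by hand, as in Massey's original argument, by analysing how $\times$ acts on orthonormal systems under the three axioms alone; I would regard that as the real work hidden behind the cited Fact.
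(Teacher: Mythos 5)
The paper records this only as a Fact quoted from Massey \cite{M1983} and offers no proof of its own, and your argument --- endowing $\mathbb{R}^{n+1}=\mathbb{R}\oplus\mathbb{R}^{n}$ with the product $(a,v)\cdot(b,w)=\big(ab-\innp{v,w},\,aw+bv+v\times w\big)$, verifying $N(xy)=N(x)N(y)$ for $N(a,v)=a^{2}+\norm{v}^{2}$ from the three axioms, and invoking Hurwitz's classification of unital composition algebras --- is precisely the argument of the cited source. Your computations check out (bilinearity and the Lagrange identity give $v\times v=0$ and hence anticommutativity, perpendicularity kills the cross terms, and the norm identity collapses to $(a^{2}+\norm{v}^{2})(b^{2}+\norm{w}^{2})$), so the proposal is correct modulo the explicitly acknowledged appeal to Hurwitz's theorem.
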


\begin{remark}
In view of \cref{fact:CroProDef37} and our proof of
\cref{thm:CrosCircum}, we cannot generalize the latter result to
a general Hilbert space $\mathcal{H}$ --- unless the dimension of
$\mathcal{H}$ is either 3 or 7. 
\end{remark}

\section*{Acknowledgments}
HHB and XW were partially supported by NSERC Discovery Grants.

\bibliographystyle{abbrv}

\end{document}